\newtheorem{theorem}{Theorem}[section]
\newtheorem{lemma}[theorem]{Lemma}
\newtheorem{corollary}[theorem]{Corollary}
\newtheorem{proposition}[theorem]{Proposition}
\newtheorem{definition}[theorem]{Definition}
\newtheorem{remark}[theorem]{Remark}
\newcommand{\set}[1]{\left\{#1\right\}}
\newcommand{\R}{\mathbb R}
\newcommand{\Z}{\mathbb Z}
\newcommand{\si}{\sigma}
\newcommand{\la}{\lambda}
\newcommand{\C}{\mathbb C }
\newcommand{\pa}{\partial }
\newcommand{\s}{\mathbf S}
\newcommand{\om}{ \omega}
\newcommand{\D}{\Delta}
\newcommand{\ap}{\alpha}
\newcommand{\bt}{\beta}
\newcommand{\gm}{\gamma}
\newcommand{\Na}{\mathbb N}
\newcommand{\re}{\operatorname{Re}}
\newcommand{\im}{\operatorname{Im}}
\newcommand{\ds}{\displaystyle}
\newcommand{\sar}{S^a_{rad}}
\newcommand{\sbr}{S^b_{rad}}
\newcommand{\sa}{S^a}
\title[Characterization of eigenfunctions of Laplacian having exponential growth]
{Characterization of eigenfunctions of Laplacian having exponential growth using Fourier multipliers}
\author[Basil Paul, Pradeep B]{Basil Paul, Pradeep Boggarapu}
\address[Basil Paul]{Department of Mathematics\\
		BITS Pilani K K Birla Goa Campus\\
		Zuarinagar, South Goa\\
		403 726, Goa, India}
\email{basilpaul9192@gmail.com}
\address[Pradeep B.]{Department of Mathematics\\
		BITS Pilani K K Birla Goa Campus\\
		Zuarinagar, South Goa\\
		403 726, Goa, India}
\email{pradeepb@goa.bits-pilani.ac.in}
\keywords{Eigenfunctions, Laplacian, Fourier Multipliers, Spherical Fourier Transform, Spherical Functions, Tempered Distributions}
\subjclass[2020]{Primary: 42B10; Secondary: 46E10.}
\begin{document}


\begin{abstract}
In 1993, Robert Strichartz established a characterization for bounded eigenfunctions of the Laplacian on $\mathbb{R}^d$. Let $\left\{f_k \right\}_{k\in \Z}$ be a doubly infinite sequence of functions on $\R^d$ satisfying
$\Delta f_k= f_{k+1}$ for all $k \in \Z$. If $\left\{f_k \right\}$'s are uniformly bounded, then Strichartz proved that $\Delta f_0= f_0$, thus generalizing a classical result of Roe on the real line. Recognizing that many physically significant eigenfunctions exhibit unbounded behavior, Howard and Reese extended this result to include functions of polynomial growth. Building upon a refined functional-analytic framework, we recently established a broader extension of Strichartz's theorem encompassing eigenfunctions of exponential growth. In the present article, we further investigate the spectral geometry of the Laplacian by replacing the differential operator with a broader class of Fourier multipliers. Specifically, we focus on radial convolution operators, including the spherical average, the ball average, and the heat operator. The central problem addressed is as follows: For a fixed multiplier $\Theta$, we consider a doubly infinite sequence of exponentially growing functions $\{f_k\}_{k \in \mathbb{Z}}$ satisfying the recurrence relation $\Theta f_k = A f_{k+1}$ for a complex constant $A$. We demonstrate that under specific spectral conditions, the functions $f_k$ correspond precisely to the eigenfunctions of the Laplacian $\Delta$ on $\mathbb{R}^d$. This result provides a unified approach to characterization theorems, linking the growth rate of eigenfunctions to the symbol of the associated multiplier.
\end{abstract}
	\maketitle
	\section{Introduction}\label{section1}
	
	The problem of characterizing functions based on the behavior of their derivatives or their action under specific operators is a classical theme in harmonic analysis. It stems from the observation that constraints on the growth of a function and its derivatives often force the function to lie within a specific spectral subspace. A foundational result in this direction concerns the real line: any function whose derivatives and anti-derivatives are uniformly bounded must be a linear combination of $\sin x$ and $\cos x$. Building on this observation, John Roe \cite{Roe} established a precise characterization for sine functions in terms of the uniform boundedness of their iterated derivatives.
	
	\begin{theorem}[Roe] \label{th:roe}
		Let $\left\{f_{k}\right\}_{k\in \Z}$ be a doubly infinite sequence of real-valued functions of a real variable with
		\[f_{k+1}(x)=\frac{d}{d x} f_{k}(x).\]
		Suppose further that there exists an $M>0$ such that $|f_{k}(x)| \leq M$  for all $ k \in \Z$ and $x \in \R.$ Then, $f_{0}(x)=a \sin (x+\phi)$ for some real constants $a$ and $\phi$. 
	\end{theorem}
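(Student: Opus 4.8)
The plan is to pass to the Fourier side, in the spirit of Strichartz's treatment, and show that the tempered distribution $\widehat{f_0}$ is supported in the two-point set $\{-1,1\}$; once this is established, the explicit form $f_0(x)=a\sin(x+\phi)$ drops out from the boundedness and reality of $f_0$. Each $f_k$ lies in $L^\infty(\R)\subset\mathcal S'(\R)$, and the recurrence $f_{k+1}=f_k'$ becomes $\widehat{f_k}=(i\xi)^k\widehat{f_0}$ for $k\ge0$ and, running the recurrence backwards, $(i\xi)^k\widehat{f_{-k}}=\widehat{f_0}$ for $k\ge0$. The only quantitative input is the uniform bound: since $\|f_k\|_\infty\le M$ for every $k\in\Z$, pairing $\widehat{f_k}$ with a Schwartz function $\phi$ and using $\|\widehat{\phi}\|_{L^1}\le C(\|\phi\|_{L^1}+\|\phi''\|_{L^1})$ gives $|\langle\widehat{f_k},\phi\rangle|\le CM(\|\phi\|_{L^1}+\|\phi''\|_{L^1})$, with $C,M$ independent of $k$ and $\phi$.

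First I would eliminate the region $\{|\xi|>1\}$. Fix $\eps>0$ and $R>1+\eps$, and let $\rho\in C_c^\infty(\R)$ be supported in $\{1+\eps\le|\xi|\le R\}$; there $(i\xi)^{-k}$ is smooth, so $\phi_k:=(i\xi)^{-k}\rho\in C_c^\infty$ and $\langle\widehat{f_0},\rho\rangle=\langle(i\xi)^k\widehat{f_0},\phi_k\rangle=\langle\widehat{f_k},\phi_k\rangle$. Both $\|\phi_k\|_{L^1}$ and $\|\phi_k''\|_{L^1}$ are bounded by a polynomial in $k$ times $(1+\eps)^{-k}$, hence tend to $0$, so by the uniform bound $\langle\widehat{f_0},\rho\rangle=0$. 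Letting $\eps\downarrow0$, $R\uparrow\infty$ and varying $\rho$ gives $\operatorname{supp}\widehat{f_0}\subseteq[-1,1]$. Symmetrically, I would eliminate $\{0<|\xi|<1\}$ by taking $\rho$ supported in $\{\eps\le|\xi|\le1-\eps\}$ and writing $\langle\widehat{f_0},\rho\rangle=\langle(i\xi)^k\widehat{f_{-k}},\rho\rangle=\langle\widehat{f_{-k}},(i\xi)^k\rho\rangle$; now $\|(i\xi)^k\rho\|_{L^1}$ and the corresponding second-derivative norm are bounded by a polynomial in $k$ times $(1-\eps)^k\to0$, so the pairing again vanishes. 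This leaves $\operatorname{supp}\widehat{f_0}\subseteq\{-1,0,1\}$.

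To finish, recall that a tempered distribution supported in the finite set $\{-1,0,1\}$ is a finite combination $\sum_{p\in\{-1,0,1\}}\sum_j c_{p,j}\delta_p^{(j)}$, whose inverse Fourier transform is $q_0(x)+q_1(x)e^{ix}+q_{-1}(x)e^{-ix}$ for polynomials $q_{-1},q_0,q_1$. Reality of $f_0$ forces $q_0$ real and $q_{-1}=\overline{q_1}$, so $f_0(x)=q_0(x)+2\re\!\bigl(q_1(x)e^{ix}\bigr)$; inspecting the leading behaviour as $x\to+\infty$ shows that boundedness of $f_0$ forces $\deg q_0=\deg q_1=0$, i.e. $f_0(x)=c_0+2\re(c_1e^{ix})$ with $c_0\in\R$, $c_1\in\C$. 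Finally $f_{-1}$ is bounded and $f_{-1}'=f_0$, but every antiderivative of $c_0+2\re(c_1e^{ix})$ contains the summand $c_0x$, so $c_0=0$; writing $c_1$ in polar form yields $f_0(x)=a\sin(x+\phi)$.

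The heart of the argument — and the step I expect to require the most care — is the ``spectral contraction'' in the two elimination steps: one must verify that the cut-off functions $\phi_k$ (respectively $(i\xi)^k\rho$) stay in a single admissible class while the symbol $(i\xi)^{\mp k}$ is genuinely contracting on the annulus in question, and that the polynomial-in-$k$ losses incurred by differentiating $(i\xi)^{\mp k}\rho$ twice are beaten by the geometric factors $(1\pm\eps)^{\pm k}$; this is exactly the place where the uniformity of $\|f_k\|_\infty\le M$ over all $k\in\Z$ enters. The remaining ingredients — the structure of a distribution supported on a finite set and the growth analysis of $q_0(x)+q_1(x)e^{ix}+q_{-1}(x)e^{-ix}$ — are routine. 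This one-dimensional computation is the template for the multiplier theorems of the paper: the powers $(i\xi)^{\pm k}$ get replaced by powers of the symbol of $\Theta$, and the set $\{-1,1\}$ by the appropriate level set of that symbol, on which $f$ is an eigenfunction of $\Delta$.
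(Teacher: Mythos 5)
Your proof is correct. The paper itself states Roe's theorem without proof (it only cites \cite{Roe}), so there is no in-paper argument to match against; but your route is precisely the one-dimensional specialization of the machinery the paper develops in Section 4. Your two elimination steps (forward iterates kill $\{|\xi|>1\}$, backward iterates kill $\{0<|\xi|<1\}$) are Theorem \ref{theorem4.1}(i)--(iii) for the symbol $m(\xi)=i\xi$, and the growth-versus-geometric-decay bookkeeping you flag as the delicate point is handled there in exactly the same way. Your structural step --- a tempered distribution supported on a finite set is a combination of derivatives of Dirac masses, hence $f_0=q_0(x)+q_1(x)e^{ix}+q_{-1}(x)e^{-ix}$ --- is Lemma \ref{lemma4.5}, and the claim that boundedness forces $\deg q_0=\deg q_1=0$ is Lemma \ref{lemma4.4}(2). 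That last claim is the only place where you are a little informal: ``inspecting the leading behaviour'' needs care because $q_0$ and $2\operatorname{Re}(q_1(x)e^{ix})$ could in principle interact; the clean argument is to convolve $f_0$ with a Schwartz function whose Fourier transform is a bump supported near $\xi=1$, which isolates $q_1(x)e^{ix}$ as a bounded function and forces $q_1$ to be constant --- this is exactly how the paper's Lemma \ref{lemma4.4} proceeds. Two small remarks: your backward step applied to a cutoff supported in $(-1+\eps,1-\eps)$ containing the origin would eliminate $\xi=0$ directly, making the final antiderivative argument with $f_{-1}$ unnecessary; and note that Roe's original proof is genuinely different (an elementary argument avoiding distributional Fourier transforms), whereas your Fourier-support approach is the one that generalizes to the multiplier theorems of this paper, as you observe.
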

	
	  The transition from one-dimensional characterizations to $\mathbb{R}^d$ requires replacing the derivative $\frac{d}{dx}$ with an appropriate translation-invariant operator, most naturally the Laplacian $\Delta = - \sum_{j=1}^{d} \frac{\partial^2}{\partial x^2_j}$. 

In 1993, Robert Strichartz \cite{Str} extended Roe's theorem to $d$-dimensions. Strichartz's work was motivated by sampling theory and the study of harmonic functions, specifically looking for conditions under which a function is essentially an eigenfunction of the Laplacian.
	
	\begin{theorem}[Strichartz]\label{th:str}
		Let $\left\{f_k \right\}_{k\in \Z}$ be a doubly infinite sequence of functions on $\R^d$ satisfying
		$\Delta f_k=\alpha f_{k+1}$ for some $\alpha$>0, for all $k \in \Z$. If $ \|f_k\|_{L^{\infty}(\R^d)} \leq C$ for all $ k \in \Z$, for some $C>0$ ,  then $\Delta f_0=\alpha f_0$.
	\end{theorem}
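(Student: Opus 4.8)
The natural approach is to pass to the Fourier transform and exploit the recurrence $\Delta f_k = \alpha f_{k+1}$ to localize the spectrum of $f_0$ onto the sphere $\{|\xi|^2 = \alpha\}$. Concretely, since each $f_k$ is bounded it is a tempered distribution, so $\widehat{f_k}$ makes sense in $\mathcal{S}'(\R^d)$, and the relation $\Delta f_k = \alpha f_{k+1}$ becomes $|\xi|^2 \widehat{f_k} = \alpha\, \widehat{f_{k+1}}$, i.e. $\widehat{f_{k+1}} = \alpha^{-1}|\xi|^2 \widehat{f_k}$, and running the recursion backwards, $\widehat{f_0} = \alpha^{-k}|\xi|^{2k}\,\widehat{f_{-k}}$ for every $k \ge 0$. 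The goal is to conclude that $\widehat{f_0}$ is supported on $S = \{|\xi|^2 = \alpha\}$, since that is exactly the distributional statement $(|\xi|^2 - \alpha)\widehat{f_0} = 0$, i.e. $\Delta f_0 = \alpha f_0$.

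First I would fix a test function $\phi \in \mathcal{S}(\R^d)$ supported away from the sphere $S$, say in the region $\{\,\big||\xi|^2 - \alpha\big| \ge \delta\,\}$, and try to show $\langle \widehat{f_0}, \phi\rangle = 0$. Writing $\langle \widehat{f_0}, \phi\rangle = \langle \widehat{f_{-k}}, \alpha^{-k}|\xi|^{2k}\phi\rangle = \langle f_{-k}, (\alpha^{-k}|\xi|^{2k}\phi)^{\vee}\rangle$ and using $\|f_{-k}\|_\infty \le C$, it suffices to bound the $L^1$ norm of the inverse Fourier transform of $\alpha^{-k}|\xi|^{2k}\phi$ and show it tends to $0$ as $k \to \infty$. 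On the support of $\phi$ we split into two zones: where $|\xi|^2 \le \alpha - \delta$ the factor $(\alpha^{-1}|\xi|^2)^k$ decays geometrically, which is harmless; the real issue is the zone $|\xi|^2 \ge \alpha + \delta$ where $(\alpha^{-1}|\xi|^2)^k$ \emph{grows}. Here the standard fix is to choose $\phi$ more cleverly — rather than a single cutoff, use that $\widehat{f_0}$ restricted to $\{|\xi|^2 \ge \alpha + \delta\}$ can itself be pushed forward: running the recursion the other direction, $\widehat{f_k} = \alpha^{-k}|\xi|^{2k}\widehat{f_0}$ shows $\|(\alpha^{-k}|\xi|^{2k}\widehat{f_0})^\vee\|_\infty \le C$ for all $k$, and on the outer zone the multiplier $(\alpha^{-1}|\xi|^2)^{-k}$ now decays, so by the same $L^1$-estimate argument applied to $f_k$ one gets that $\widehat{f_0}$ tested against anything supported in the outer zone vanishes. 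Combining the two zones kills the support of $\widehat{f_0}$ off $S$.

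The technical heart — and the step I expect to be the main obstacle — is making the $L^1$ estimates on $(\alpha^{-k}|\xi|^{2k}\phi)^\vee$ uniform and showing the right decay in $k$: one must control derivatives of $|\xi|^{2k}\phi$ up to order $d+1$ (to get integrable Fourier decay) and track how the resulting constants grow with $k$ against the geometric gain $(\alpha^{-1}(\alpha\mp\delta))^k$. By Leibniz, differentiating $|\xi|^{2k}$ produces polynomial-in-$k$ prefactors, so the geometric decay wins and the $L^1$ norm is $\lesssim_{\phi,d} k^{d+1}\,\rho^k \to 0$ with $\rho = (\alpha-\delta)/\alpha < 1$ (resp. the reciprocal in the outer zone). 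A mild subtlety is that $|\xi|^{2k}$ is smooth only because $2k$ is an even integer, so no regularity is lost at the origin; and one should first reduce to $\phi$ supported in an annulus bounded away from both $0$ and $\infty$ by handling the neighborhood of the origin separately (there $\widehat{f_0}$ restricted near $0$ is controlled by $f_k$ with the recursion run so that the multiplier vanishes to high order at $0$). Once $\operatorname{supp}\widehat{f_0} \subseteq S$ is established, the conclusion $\Delta f_0 = \alpha f_0$ is immediate since $(|\xi|^2-\alpha)$ vanishes on $S$ and annihilates any distribution supported there — wait, that last implication needs $\widehat{f_0}$ to be a measure or at least to have no transverse derivatives concentrated on $S$; in general a distribution supported on a hypersurface can involve normal derivatives of surface measure, so one must additionally argue, again via the boundedness of all $f_k$, that $\widehat{f_0}$ has order $0$ transverse to $S$, e.g. by testing against $(|\xi|^2 - \alpha)^m \psi$ and using the recursion to show these pair to $0$ for the relevant $m$. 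I would close by assembling these pieces into the statement $\Delta f_0 = \alpha f_0$.
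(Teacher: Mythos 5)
Your Fourier-support localization is correct and is in fact the same mechanism the paper builds in general: your two-zone argument is the specialization to $\Theta=\Delta$, $m(\xi)=|\xi|^2$ of Theorem~\ref{theorem4.1}, with the backward recursion confining $\operatorname{supp}\widehat{f_0}$ to $\{|\xi|^2\le\alpha\}$ and the forward one to $\{|\xi|^2\ge\alpha\}$ (and your use of the paper's convention $\Delta=-\sum_j\partial^2/\partial x_j^2$, so that $\widehat{\Delta f}=|\xi|^2\widehat f$ and the sphere $S=\{|\xi|^2=\alpha\}$ is nonempty, is the right one). The genuine gap is exactly the step you flag at the end and then wave at. From $\operatorname{supp}\widehat{f_0}\subseteq S$, the finite order of the tempered distribution $\widehat{f_0}$ near the compact sphere $S$, together with the nonvanishing of $\nabla(|\xi|^2-\alpha)$ on $S$, yields only the nilpotency $(\Delta-\alpha)^{N+1}f_0=0$ for some finite $N$; your proposal to ``test against $(|\xi|^2-\alpha)^m\psi$ and use the recursion'' cannot by itself push $m$ down to $1$, because $(|\xi|^2-\alpha)\widehat{f_0}=\alpha(\widehat{f_1}-\widehat{f_0})$ has no reason to vanish termwise, and any attempt to divide by $|\xi|^2-\alpha$ near $S$ is circular.

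What is missing is the separate iteration that the paper isolates as Lemma~\ref{lemma3.1}. Let $k_0$ be maximal with $(\Delta-\alpha)^{k_0}f_0\neq0$ and set $g=(\Delta-\alpha)^{k_0-1}f_0$, which by the recursion is a \emph{fixed} finite linear combination of the $f_j$'s, namely $g=\sum_{j=0}^{k_0-1}\binom{k_0-1}{j}(-\alpha)^{k_0-1-j}\alpha^j f_j$. Since $(\Delta-\alpha)^2g=0$, the binomial expansion gives $\Delta^kg=\alpha^kg+k\alpha^{k-1}(\Delta-\alpha)g$, while applying $\Delta^k$ to the linear combination and using $\Delta^k f_j=\alpha^k f_{j+k}$ together with the \emph{two-sided} uniform bound $\|f_j\|_\infty\le C$ shows $\|\Delta^kg\|_\infty\le C'\alpha^k$. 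Dividing by $k\alpha^{k-1}$ (here $\alpha>0$ is essential) and letting $k\to\infty$ forces $(\Delta-\alpha)g=0$, contradicting the maximality of $k_0$ unless $k_0=0$. Without this step your argument cannot exclude solutions like $\widehat{f_0}$ being a transverse derivative of surface measure on $S$, which survives the support localization; so the proposal is the right route but is incomplete precisely at the point where the paper's Lemma~\ref{lemma3.1} does the work.
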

	
	Strichartz further remarked that this result holds for $L^p$ norms provided $p> \frac{2d}{d-1}$. However, a significant limitation of these classical results was the restriction to bounded functions. In many contexts involving heat diffusion or wave propagation, solutions naturally exhibit growth. Addressing this, Howard and Reese broadened the scope to account for unbounded eigenfunctions with polynomial growth, bridging the gap between $L^\infty$ spectral theory and the theory of tempered distributions.
	
		\begin{theorem}[Howard and Reese]\label{th:howard2}
		Let $a \geq 0$ and let $\{ f_k\}_{k\in \Z}$ be a sequence of complex-valued functions on $\mathbb{R}^d$ that satisfy
		$$ \Delta f_k=f_{k+1} $$
		and
		\begin{equation}\label{eq:pg} \left|f_k(x)\right| \leq M_k(1+|x|)^a \end{equation} for all $k \in \Z$,
		where the constants $M_k$ have sublinear growth:
		$$ \lim _{k \rightarrow \infty} \frac{M_k}{k}=\lim _{k \rightarrow \infty} \frac{M_{-k}}{k}=0.
		$$
		Then $\Delta f_0=f_0$.
	\end{theorem}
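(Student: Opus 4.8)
The plan is to pass to the Fourier transform — with the convention under which $\widehat{\Delta g}=\abs\xi^{2}\,\widehat g$ — and to show that $\widehat{f_0}$, which is a priori only a tempered distribution, is a distribution supported on the unit sphere carrying \emph{no transverse derivatives}; this is precisely the assertion $\Delta f_0=f_0$. First I would record two facts. On the one hand, the bound \eqref{eq:pg} makes each $f_k$ a tempered distribution and yields, for every fixed compact set $K$, the \emph{uniform-in-$k$} estimate
\[
\abs{\langle \widehat{f_k},\phi\rangle}\ \le\ C\,M_k\!\!\sum_{\abs\alpha\le N_0}\!\!\norm{\partial^\alpha\phi}_{L^\infty}\qquad\bigl(\phi\in C_c^\infty(K)\bigr),
\]
with $N_0=N_0(a,d)$ and $C=C(a,d,K)$; this follows from $\langle\widehat{f_k},\phi\rangle=\langle f_k,\widehat\phi\,\rangle$ by bounding $\int(1+\abs x)^a\abs{\widehat\phi}$ by a weighted sup-norm of $\widehat\phi$ and converting the weight into derivatives of $\phi$. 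On the other hand, $\Delta f_k=f_{k+1}$ becomes $\abs\xi^{2}\,\widehat{f_k}=\widehat{f_{k+1}}$, so $\widehat{f_k}=\abs\xi^{2k}\,\widehat{f_0}$ on $\R^d$ for $k\ge0$ and on $\{\xi\ne0\}$ — in particular near $\{\abs\xi=1\}$ — for all $k\in\Z$.

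The first step is to show $\operatorname{supp}\widehat{f_0}\subseteq\{\abs\xi=1\}$. For $\phi\in C_c^\infty$ with $\operatorname{supp}\phi\subseteq\{\abs\xi\le1-2\eps\}$, write $\langle\widehat{f_0},\phi\rangle=\langle\widehat{f_{-k}},\abs\xi^{2k}\phi\rangle$ using $\widehat{f_0}=\abs\xi^{2k}\widehat{f_{-k}}$; Cauchy's estimates for the entire function $z\mapsto(z_1^2+\dots+z_d^2)^k$ give $\norm{\partial^\alpha(\abs\xi^{2k})}_{L^\infty(\abs\xi\le1-2\eps)}\le C_{\alpha,\eps}(1-\eps)^{2k}$, whence $\abs{\langle\widehat{f_0},\phi\rangle}\le C_\phi M_{-k}(1-\eps)^{2k}\to0$ because $M_{-k}=o(k)$. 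Thus $\widehat{f_0}$ vanishes on $\{\abs\xi<1\}$; the symmetric argument with $\widehat{f_0}=\abs\xi^{-2k}\widehat{f_k}$ and Cauchy's estimates for $(z_1^2+\dots+z_d^2)^{-k}$ on $\{\abs\xi\ge1+2\eps\}$ shows it vanishes on $\{\abs\xi>1\}$.

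The heart of the argument is the second step. Being compactly supported, $\widehat{f_0}$ has finite order $\le N_0$, so by the structure theorem for distributions supported on a hypersurface, near any $\xi_0$ with $\abs{\xi_0}=1$, in coordinates $(u,\theta)$ with transverse variable $u=\abs\xi^{2}-1$ and $\theta$ on the sphere, one may write $\widehat{f_0}=\sum_{j=0}^{N_0}\partial_u^{\,j}\delta_0(u)\otimes S_j(\theta)$ with $S_j$ distributions on the sphere. Suppose, for contradiction, that for some $\xi_0$ this expansion is nontrivial above index $0$, and let $m\ge1$ be the top index with $S_m\not\equiv0$ there. Near $\xi_0$ we have $\widehat{f_{-k}}=\abs\xi^{-2k}\widehat{f_0}=(1+u)^{-k}\widehat{f_0}$, and expanding the multiplication by the smooth function $h_k(u)=(1+u)^{-k}$ through $h_k(u)\,\partial_u^{\,j}\delta_0=\sum_{l}\binom{j}{l}(-1)^l h_k^{(l)}(0)\,\partial_u^{\,j-l}\delta_0$ shows that the coefficient of $\partial_u^{\,m-1}\delta_0(u)$ in $\widehat{f_{-k}}$ is $S_{m-1}+mk\,S_m$ (using $h_k(0)=1$, $h_k'(0)=-k$). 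Pairing $\widehat{f_{-k}}$ with a $k$-independent test function $\phi$ which near the sphere equals $\tfrac{u^{m-1}}{(m-1)!}\,\psi(\theta)$, times a cutoff identically $1$ near $u=0$ and supported in the coordinate patch ($\psi\in C_c^\infty$ in $\theta$), isolates that coefficient, so the uniform estimate above gives
\[
\abs{\langle S_{m-1}+mk\,S_m,\ \psi\rangle}\ \le\ C\,M_{-k}\,q(\phi)
\]
with $q(\phi)$ independent of $k$. Dividing by $mk$ and letting $k\to\infty$, the term $\tfrac1{mk}\langle S_{m-1},\psi\rangle$ vanishes and $\tfrac{M_{-k}}{mk}\,q(\phi)\to0$ by $M_{-k}=o(k)$, so $\langle S_m,\psi\rangle=0$ for every $\psi$ — contradicting $S_m\not\equiv0$. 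Hence near every $\xi_0$ the transverse order of $\widehat{f_0}$ is $0$, i.e.\ $\widehat{f_0}=S_0(\theta)\otimes\delta_0(u)$ locally, and therefore $(\abs\xi^{2}-1)\widehat{f_0}=u\,\widehat{f_0}=0$, which is exactly $\Delta f_0=f_0$.

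The Fourier-transform bookkeeping and the Cauchy estimates of the first step are routine. I expect the main obstacle to be the second step: correctly invoking the local structure theory of distributions supported on the unit sphere, carefully tracking how multiplication by $(1+u)^{-k}$ shifts the transverse-derivative coefficients, and recognizing that it is precisely the sublinear growth $M_{-k}/k\to0$ — stronger than the merely subexponential decay that sufficed in the first step — that kills the top coefficient $S_m$ after dividing by $k$.
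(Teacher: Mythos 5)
Your argument is correct. Note first that the paper does not prove Theorem~\ref{th:howard2} at all --- it is quoted from Howard--Reese as background --- so there is no in-paper proof to compare against; but your two-step scheme (localize $\operatorname{supp}\widehat{f_0}$ to $\{\abs{\xi}=1\}$ using the recursion together with a $k$-uniform seminorm bound, then kill the transverse derivatives in the delta-layer expansion by exploiting the sublinear growth of $M_{-k}$) is exactly the circle of ideas the paper itself uses for its main results: your first step is the analogue of Theorem~\ref{theorem4.1}/\ref{theorem4.7}, and your second step plays the role of Lemma~\ref{lemma4.8} combined with the coefficient-comparison/Lemma~\ref{lemma3.1} device, carried out directly on the Fourier side. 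I checked the two computations that carry the weight: the Cauchy estimates giving $\norm{\partial^\alpha(\abs{\xi}^{\pm 2k})}_{L^\infty}\le C(1\mp\eps)^{\pm 2k}$ on $\{\abs{\xi}\lessgtr 1\mp 2\eps\}$ are fine (the constants may depend on the compact support of $\phi$, which is harmless), and the identity $h_k\cdot\partial_u^{j}\delta_0=\sum_l\binom{j}{l}(-1)^l h_k^{(l)}(0)\partial_u^{j-l}\delta_0$ with $h_k(0)=1$, $h_k'(0)=-k$ does give $S_{m-1}+mk\,S_m$ as the $(m-1)$-st transverse coefficient, which your test function $\chi(u)\,u^{m-1}\psi(\theta)/(m-1)!$ isolates since $\partial_u^{j}(u^{m-1})|_{u=0}=0$ for $j\ne m-1$, $j\le m$. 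Two points deserve an explicit sentence in a full write-up but are not gaps: the local structure theorem for finite-order distributions supported on a hypersurface (legitimate here because $\widehat{f_0}$ is compactly supported, hence of finite order, and $u=\abs{\xi}^2-1$ has nonvanishing gradient on the sphere) and its uniqueness, which is what makes ``the coefficient of $\partial_u^{m-1}\delta_0$'' well defined; and the Jacobian of the passage from $\xi$ to $(u,\theta)$, which only reshuffles the $S_j$ and the test function by fixed smooth factors. With those remarks added, the sketch upgrades to a complete proof.
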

	
	While polynomial growth covers tempered distributions, it fails to capture the spectral geometry of functions with exponential growth, which are central to the study of analytic functionals and Paley-Wiener type spaces. In a recent work \cite{BP}, we generalized Strichartz's theorem to this setting. For a fixed $a>0$, consider the space of complex valued functions on $\R^d$ having exponential growth given by 
	$$X_a = \{f : \R^d \to \C |~ |f(x)| \leq Me^{a|x|}\}.$$ We identified that the point spectrum of Laplacian on $X_a$ is precisely $\Lambda(\Omega_{a})$, where $\Lambda$ denotes the map $\la \mapsto \la^2$ and $\Omega_{a}$ denotes the complex strip $$\{\la \in \C |   |\im(\la)| \leq a\}.$$  On parametrizing the above set, we obtained the spectral geometry of $\D$ on $X_a$  which is a region bounded by a right handed parabola, with its  boundary denoted by $\partial \Lambda(\Omega_{a})$. 
We formulated a characterization for the eigenfunctions of Laplacian having exponential growth corresponding to the complex eigenvalue $\la_{0}^2$ with $\im(\la_0) \neq 0$ as follows.  
	\begin{theorem}[\cite{BP}]\label{Theorem 0.1}
		For any $\la_0 \in \C$ with $\im(\la_0) \neq 0$,  let $N(\la_0)$ denotes the outward normal drawn to the parabola $\partial \Lambda(\Omega_{|\im(\la_0)|})$ at the point $\la_{0}^2$.  Let $z_0 \in N(\la_0)$ and $\{f_k\}_{k \in \mathbb{Z}}$ be a doubly infinite sequence of functions on $\R^d$ satisfying for all $k\in \Z$,
		\begin{enumerate}
			\item $(\D- z_0 I)f_k = Af_{k+1}$ for some non-zero $A \in \C $ and
			\item  $ |f_k(x)| \leq M e^{|\im(\la_0)| |x|} $ for a constant $M>0$ and for every $x \in \R^d$. 
		\end{enumerate} Then the following assertions hold:
		\begin{enumerate}[(a)]
			\item If $|A| = |{\la_0}^2 - z_0|$, then $\D f_0 = {\la_0}^2 f_0$.
			\item If $|A| < |{\la_0}^2 - z_0|$, then $f_k = 0,$ for all $k \in \Z$ and
			\item There are solutions satisfying conditions (1) and (2) which are not eigenfunctions of $\D$ when $|A| >  |{\la_0}^2 - z_0|$.
		\end{enumerate}
	\end{theorem}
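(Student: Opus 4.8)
The plan is to move to the Fourier side, reduce the assertion to a statement about the spectral support of a single analytic functional, and then settle the three cases by an elementary convexity property of the parabolic region $\Lambda(\Omega_{a_0})$, where I write $a_0 := \abs{\im(\la_0)}$. In the functional-analytic framework of \cite{BP}, every $f_k \in X_{a_0}$ has a Fourier transform $\widehat{f_k}$, interpreted as an analytic functional whose spectral support lies in $\Lambda(\Omega_{a_0})$, on which the Laplacian acts as multiplication by the spectral variable $\mu = \abs{\xi}^2$. Transforming the recurrence $(\D - z_0 I)f_k = A f_{k+1}$ gives $(\mu - z_0)\widehat{f_k} = A\,\widehat{f_{k+1}}$; since $A \ne 0$ and, in the non-vacuous cases, $z_0 \notin \Lambda(\Omega_{a_0})$, the factor $\mu - z_0$ does not vanish on the relevant set, and iterating both upward and downward yields, on the spectral support,
\[
\widehat{f_k} = \left(\frac{\mu - z_0}{A}\right)^{k}\widehat{f_0}, \qquad k \in \Z.
\]

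The heart of the argument is then a localization step. I would show that the \emph{uniform} bound $\abs{f_k(x)} \le M e^{a_0\abs{x}}$ — with the same constant $M$ for every $k$ — forces the multiplier $\bigl((\mu - z_0)/A\bigr)^{k}$ to remain bounded on $\operatorname{supp}\widehat{f_0}$ as $k \to +\infty$, and its reciprocal to remain bounded as $k \to -\infty$. Concretely, one tests $\widehat{f_k}$ against the functions whose inverse transforms decay like $e^{-a_0\abs{x}}$ (so that the pairing is dominated by $M$), divides by the non-vanishing factor $(\mu - z_0)^{\pm k}$ where that is harmless, and lets $k \to \pm\infty$; the conclusion is that $\widehat{f_0}$ is supported on
\[
\Gamma \ :=\ \bigl\{\, \mu \in \Lambda(\Omega_{a_0}) \ :\ \abs{\mu - z_0} = \abs{A} \,\bigr\},
\]
the intersection of the circle of radius $\abs{A}$ centred at $z_0$ with the parabolic region. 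This is the exponential-growth counterpart of the classical fact that, for bounded $f_0$, boundedness of $\abs{\xi}^{2k}\widehat{f_0}$ in $\mathcal S'$ confines $\operatorname{supp}\widehat{f_0}$ to the unit sphere. I expect this to be the main obstacle, for two reasons: $\widehat{f_0}$ is only an analytic functional, so the test-and-pass-to-the-limit argument must be carried out in the dual of the exponentially weighted Schwartz-type space of \cite{BP}; and at the borderline exponent $a_0 = \abs{\im\la_0}$ one must additionally control the \emph{order} of $\widehat{f_0}$ near $\Gamma$, excluding derivative-of-delta contributions on the complexified spheres $\{\abs{\xi}^2 = \mu\}$, since such terms would produce growth strictly faster than $e^{a_0\abs{x}}$.

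With $\Gamma$ in hand the geometry is straightforward. The set $\Lambda(\Omega_{a_0})$ is the closed convex region bounded by the right-handed parabola $\pa\Lambda(\Omega_{a_0})$, and the outward normal $N(\la_0)$ points away from it; since $z_0$ lies on this normal at $\la_0^2 \in \pa\Lambda(\Omega_{a_0})$, the point $\la_0^2$ is the unique nearest point of $\Lambda(\Omega_{a_0})$ to $z_0$, so $\abs{\mu - z_0} \ge \abs{\la_0^2 - z_0} =: \rho_0$ for every $\mu \in \Lambda(\Omega_{a_0})$, with equality only at $\mu = \la_0^2$. If $\abs{A} = \rho_0$, then $\Gamma = \{\la_0^2\}$, so $\widehat{f_0}$ is supported at the single spectral point $\la_0^2$ and, being of order zero by the previous step, gives $\D f_0 = \la_0^2 f_0$, which is (a). If $\abs{A} < \rho_0$, then $\Gamma = \emptyset$; hence $\widehat{f_0} = 0$, and applying the same localization to each $f_k$ (each of which satisfies the hypotheses) gives $\widehat{f_k} = 0$, so $f_k = 0$ for all $k$, which is (b).

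For (c), suppose $\abs{A} > \rho_0$. Since $\rho_0 = \operatorname{dist}(z_0, \Lambda(\Omega_{a_0}))$ and the circle of radius $\rho_0$ is tangent to $\pa\Lambda(\Omega_{a_0})$ at $\la_0^2$, the circle of the larger radius $\abs{A}$ dips into the region near $\la_0^2$, so that $\Gamma$ contains at least two distinct points $\mu_1 \ne \mu_2$, both lying in $\Lambda(\Omega_{a_0})$. Choose nonzero eigenfunctions $g_1, g_2 \in X_{a_0}$ with $\D g_j = \mu_j g_j$ — these exist because $\mu_j \in \Lambda(\Omega_{a_0})$, e.g. suitable plane waves, which obey $\abs{g_j(x)} \le C e^{a_0\abs{x}}$ — and set
\[
f_k \ =\ \left(\frac{\mu_1 - z_0}{A}\right)^{k} g_1 \ +\ \left(\frac{\mu_2 - z_0}{A}\right)^{k} g_2 .
\]
A direct computation shows $(\D - z_0 I)f_k = A f_{k+1}$, while $\abs{\mu_j - z_0} = \abs{A}$ gives $\abs{f_k(x)} \le C' e^{a_0\abs{x}}$ uniformly in $k$, and $f_0 = g_1 + g_2$ is not an eigenfunction of $\D$ because $g_1$ and $g_2$ are nonzero eigenfunctions with distinct eigenvalues. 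This produces the required counterexamples and completes the scheme. The only genuinely hard input is the localization step of the second paragraph; the convex-geometric fact that the foot of the outward normal realizes the distance to the convex set is elementary, and is precisely what the hypothesis $z_0 \in N(\la_0)$ is designed to supply.
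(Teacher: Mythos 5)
Your overall architecture is the right one, and it matches the strategy this circle of results actually uses (compare the proofs of Theorems \ref{theorem3.3} and \ref{theorem4.1} and Lemma \ref{lemma3.1} in the present paper for the multiplier analogues): pass to the spectral side, use the two-sided recurrence together with the uniform bound to confine the spectrum of $f_0$ to the level set $\Gamma=\{\mu\in\Lambda(\Omega_{a_0}):|\mu-z_0|=|A|\}$, identify $\Gamma$ via the convexity of the parabolic region (the foot of the outward normal realizes the distance from $z_0$, so $|\mu-z_0|\ge|\la_0^2-z_0|$ with equality only at $\la_0^2$), and manufacture the case (c) counterexamples from two spectral points on the level circle. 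Your part (c) construction and the convex-geometric identification of $\Gamma$ are correct and essentially what the paper does elsewhere (e.g.\ part (c) of Theorem \ref{theorem3.3}).

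The gap is in the step you yourself flag as the main obstacle, and in particular in the mechanism you propose for controlling the order of $\widehat{f_0}$ at $\la_0^2$ in case (a). Support localization alone leaves open the possibility that $\widehat{f_0}$ is a finite sum of derivatives of the Dirac mass at the spectral point, i.e.\ (after radialization) $f_0=\sum_{r=0}^{N}a_r\,\partial_\la^r\phi_\la|_{\la=\la_0}$, a \emph{generalized} eigenfunction rather than an eigenfunction. You claim such terms ``would produce growth strictly faster than $e^{a_0|x|}$,'' but this is false in general: by property (5) of Subsection \ref{subsection2.1}, $\big|\partial_\la^r\phi_\la(x)|_{\la=\la_0}\big|\le C(1+|x|)^r\phi_{ia_0}(x)\le C'(1+|x|)^{r}|x|^{-(d-1)/2}e^{a_0|x|}$, so for $d\ge 3$ a first-order term still satisfies hypothesis (2). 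Growth in $x$ cannot rule out the Jordan block; what rules it out is uniformity in $k$: a nontrivial block gives $\Theta^kT=B^kT+kB^{k-1}(\Theta-B)T$ with $|B|=|A|$, forcing $f_k$ to grow linearly in $k$ and contradicting the single constant $M$ --- exactly the content of Lemma \ref{lemma3.1}. Relatedly, the localization step itself (from ``$\big((\mu-z_0)/A\big)^k\widehat{f_0}$ is uniformly controlled'' to ``supported on $\Gamma$'') is asserted rather than proved; in the exponentially weighted dual space this requires the quantitative seminorm estimates of the kind carried out in the proof of Theorem \ref{theorem3.3} (the $k^\tau|z|^{k-\tau}|z-1|^{N+1-\tau}$ computation on the discs $\mathbb{D}_s$), since one must first establish a nilpotency statement $(\Theta-B)^{N+1}T_0=0$ before the Lemma \ref{lemma3.1} mechanism can be invoked. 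As written, the proposal is a correct plan whose two decisive analytic steps are missing, one of them backed by an incorrect heuristic.
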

However, the Laplacian is but one generator of translation-invariant operators. The history of harmonic analysis, particularly the works of Delsarte on mean-periodic functions and Helgason on homogeneous spaces, suggests that spectral properties of the Laplacian are intimately linked to "functions of the Laplacian," such as spherical means and heat operators. This relationship relies on the Heuristic Principle: an equation involving the Laplacian implies an analogous formulation involving suitable functions of the Laplacian. (See [\cite{mvp}]).

For instance, the classical mean-value property states that a function is harmonic ($\Delta f = 0$) if and only if it is invariant under spherical averaging. This suggests that Strichartz-type characterizations should not be limited to differential operators but should extend to general Fourier multipliers.

The present work generalizes the functional-analytic framework developed in \cite{BP} to obtain characterizations via Fourier multiplier operators. We specifically examine the spherical mean operator $M_t$, the ball mean operator $B_t$, and the heat semigroup. 

%
%

As a representative result, we state below a characterization theorem for the spherical mean operator $M_t$ (see Subsection~\ref{ss:multipliers} for its definition). Throughout, $\phi_{\lambda}(x)$ denotes the Euclidean spherical function defined in \eqref{Sp.funct}, and $e_1=(1,0,\ldots,0)$.

\begin{theorem}\label{theorem1.5}
For a fixed $t>0$ and any $a>0$, let $\{f_k\}_{k\in\mathbb{Z}}$ be a doubly infinite sequence of functions on $\mathbb{R}^d$ satisfying, for all $k\in\mathbb{Z}$,
\begin{enumerate}
\item $M_t f_k = A f_{k+1}$ for some $A\in\mathbb{C}$, and
\item $|f_k(x)| \leq M e^{a|x|}$ for some constant $M>0$ and for all $x\in\mathbb{R}^d$.
\end{enumerate}
Then the following assertions hold:
\begin{enumerate}[(a)]
\item If $|A|=\phi_{ia}(t e_1)$, then $\Delta f_0 = -a^2 f_0$.
\item If $|A|>\phi_{ia}(t e_1)$, then $f_0=0$.
\item If $|A|<\phi_{ia}(t e_1)$, then $f_0$ need not be an eigenfunction of $\Delta$.
\end{enumerate}
\end{theorem}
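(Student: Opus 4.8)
The plan is to mirror the functional-analytic strategy of Theorem~\ref{Theorem 0.1} from \cite{BP}, but with the Laplacian replaced by the spherical mean multiplier $M_t$. The starting point is the observation that $M_t$ is a radial convolution operator whose Fourier multiplier (in the sense of the spherical/Bessel transform) is $\lambda \mapsto \phi_\lambda(te_1)$; i.e. $\widehat{M_t f}(\lambda) = \phi_\lambda(te_1)\widehat{f}(\lambda)$ on suitable classes of distributions, with $\phi_\lambda$ the Euclidean spherical function of \eqref{Sp.funct}. Since the $f_k$ lie in the exponential growth space $X_a$, each $f_k$ is a tempered distribution whose (distributional) spherical Fourier transform is supported in $\Lambda(\Omega_a)$, equivalently the relevant spectral parameter ranges over the strip $\Omega_a = \{\lambda : |\im\lambda|\le a\}$. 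On this strip one has the sharp bound $|\phi_\lambda(te_1)| \le \phi_{ia}(te_1)$, with equality exactly at the "top" of the strip $\lambda = \pm ia$ (this is the analogue of the parabola-vertex geometry in \cite{BP}, and is where the constant $\phi_{ia}(te_1)$ in the statement comes from). First I would establish this multiplier description together with the sharp pointwise bound on $\phi_\lambda(te_1)$ over $\Omega_a$, isolating the maximizing set $\{\pm ia\}$ (which corresponds to the Laplacian eigenvalue $-a^2$).

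Next I would iterate the recurrence $M_t f_k = A f_{k+1}$. If $A\ne 0$, then $f_{k} = A^{-k} (M_t)^k f_0$ for $k\ge 0$ and, running the recursion backward, $f_0 = A^{-k}(M_t)^k f_{-k}$, so $(M_t)^k f_{-k} = A^k f_0$; combining, $(M_t)^{2k} f_{-k} = A^{2k} f_0$ up to shifts, which via the uniform growth bound $|f_k(x)|\le Me^{a|x|}$ will pin the spectral support of $f_0$. Concretely, testing against the spherical Fourier transform, the measure $\widehat{f_0}$ must be supported on the set where $|\phi_\lambda(te_1)| = |A|$ (otherwise the powers $(\phi_\lambda(te_1)/A)^k$ blow up or decay, contradicting the two-sided uniform bound and the fact that $f_0$ is a fixed nonzero distribution of exponential type $a$). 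This is the heart of the matter:

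\begin{itemize}
\item If $|A| = \phi_{ia}(te_1)$: the support of $\widehat{f_0}$ is forced into the maximizing set $\{\pm ia\}$, whence $f_0$ is annihilated by $\Delta + a^2$, giving $\Delta f_0 = -a^2 f_0$.
\item If $|A| > \phi_{ia}(te_1)$: no $\lambda\in\Omega_a$ satisfies $|\phi_\lambda(te_1)| = |A|$, so the spectral support of $f_0$ is empty and $f_0 = 0$.
\item If $|A| < \phi_{ia}(te_1)$: the level set $\{\lambda\in\Omega_a : |\phi_\lambda(te_1)| = |A|\}$ is nonempty but generically does \emph{not} consist of Laplacian eigenvalues (it is a curve in the strip, not a point of the form $\pm ia$ or a real point), and one can exhibit an explicit $f_0$ — e.g. a suitable spherical function $\phi_{\lambda_0}$ or a superposition over such a level set — which satisfies (1) and (2) but is not an eigenfunction of $\Delta$. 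The extension of $f_0$ to a full two-sided sequence $\{f_k\}$ is then automatic by setting $f_k = A^{-k}(M_t)^k f_0$ and checking the growth bound is preserved (because $|\phi_{\lambda_0}(te_1)| = |A|$ exactly).
\end{itemize}

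The main obstacle is the rigorous "spectral support" argument in part~(a)–(b): one must justify, for a tempered distribution of exponential type, that the two-sided uniform bound $\sup_k \|f_k\|_{X_a} < \infty$ (which is \emph{not} assumed here — only a single constant $M$ bounding all $f_k$ is given, so in fact the bound \emph{is} uniform in $k$) forces the distributional spherical transform of $f_0$ onto the maximal level set of the symbol. This requires a careful duality argument testing $f_0$ against analytic functionals / Paley–Wiener test functions concentrated near points where $|\phi_\lambda(te_1)| < |A|$, analogous to the parabola argument in \cite{BP} but now with the transcendental symbol $\phi_\lambda(te_1)$ in place of the polynomial $\lambda^2 - z_0$. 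I would handle this by first reducing to the radial part via spherical harmonic decomposition, then using the known asymptotics of $\phi_\lambda(te_1)$ (Bessel function asymptotics) to control the symbol on the strip, and finally invoking the structure theorem for the spherical transform of $X_a$-distributions established in \cite{BP} to convert the growth bound into the support statement. Part~(c) is comparatively soft once the level-set geometry is understood, reducing to exhibiting one explicit non-eigenfunction solution.
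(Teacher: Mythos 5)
Your overall strategy --- treat $M_t$ as a radial multiplier with symbol $\lambda\mapsto\phi_\lambda(te_1)$, use the sharp bound $|\phi_\lambda(te_1)|\le\phi_{ia}(te_1)$ on $\Omega_a$ with equality only at $\lambda=\pm ia$ (Proposition~\ref{Prop2.2}), iterate the recurrence, and build the part~(c) counterexample from two points of the level set --- is the same as the paper's, which proves the distributional statement (Theorem~\ref{theorem3.3}) and deduces Theorem~\ref{theorem1.5} from it. Parts (b) and (c) of your sketch are essentially correct. In part (a), however, there are two genuine gaps, both located exactly where you yourself flag ``the main obstacle.'' First, the ``spectral support'' step cannot be run by testing against functions concentrated near a point $\lambda_0\ne\pm ia$: in the exponential-type setting the test space for $\mathcal{H}f_0$ is $H_e(\Omega_a)$, a space of functions holomorphic on the strip, so there are no compactly supported bump functions and no localization. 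Moreover $(\phi_\lambda(te_1)/A)^k$ does \emph{not} decay uniformly on $\Omega_a$ when $|A|=\phi_{ia}(te_1)$, since the ratio tends to $1$ at $\pm ia$. The paper's resolution is to prove the stronger global statement $(\phi_\lambda(te_1)-\phi_{ia}(te_1))^{N+1}\mathcal{H}f_0=0$: the factor vanishing to high order at $\pm ia$ compensates for the non-decay of the power, and the needed uniform bound $\sup k^\tau|z|^{k-\tau}|z-1|^{N+1-\tau}\to 0$ is obtained by showing the image of a neighborhood of $ia$ under $\lambda\mapsto\phi_\lambda(te_1)/\phi_{ia}(te_1)$ lies in a disc internally tangent to the unit circle at $1$. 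Your sketch supplies no substitute for this mechanism.

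Second, even granting that $\mathcal{H}f_0$ is supported in $\{\pm ia\}$, the deduction ``whence $f_0$ is annihilated by $\Delta+a^2$'' does not follow: the structure theorem only yields $f_0=P(\partial_\lambda)\phi_\lambda|_{\lambda=ia}$ for some polynomial $P$, i.e.\ a \emph{generalized} eigendistribution, and low-order derivatives $\phi_{ia,k}$ can still satisfy the growth bound $Me^{a|x|}$ (since $\phi_{ia}(x)$ itself decays like $|x|^{-(d-1)/2}e^{a|x|}$, a polynomial factor can be absorbed). Ruling out $\deg P\ge 1$ requires a second use of the uniform-in-$k$ bound --- this is the ``no Jordan block'' argument of Lemma~\ref{lemma3.1} --- and the subsequent passage from $M_tf_0=\phi_{ia}(te_1)f_0$ to $\Delta f_0=-a^2f_0$ is not automatic either: it is the One Radius Theorem~\ref{theorem2.15}, whose proof rests on the Wiener--Tauberian density statement of Proposition~\ref{prop2.9}. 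Your one-line conclusion elides both of these steps; they are the technical content of the theorem.
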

Theorem~\ref{theorem1.5} may be viewed as a Strichartz-type characterization in the setting of exponential growth, wherein the role of the Laplacian is replaced by the spherical mean operator, thereby extending the classical correspondence between growth conditions and spectral parameters to a broader class of Fourier multipliers.
		
	Analogous results may be obtained for the ball average operator $B_t$ and the heat semigroup $e^{-t\Delta}$, with appropriate modifications of the associated symbols; see Corollaries~\ref{corollary3.5} and~\ref{corollary3.7}. More generally, we aim to establish a Strichartz-type characterization for eigenfunctions of arbitrary Fourier multipliers with real-valued symbols acting on the Schwartz space $\mathcal{S}(\mathbb{R}^d)$.
	 \begin{theorem}\label{theorem1.6}
		Let $\Theta: \mathcal{S}(\R^d) \rightarrow \mathcal{S}(\R^d)$ be a multiplier with real valued symbol $m(\xi) \in C^\infty (\R^d)$. Let $\left\{f_k\right\}_{k\in \Z}$ be a doubly infinite sequence of  functions on $\R^d$. Suppose that for all $ k \in \Z$, $\Theta f_k=A f_{k+1}$ for a non-zero constant $A \in \mathbb{C}$ and $\|f_{k}\|_{p} \leq M $ for $k\in \Z$ and $1\leq p \leq \infty$ and a constant $M >0$. Let $m(\R^d)=\left\{m(\xi)| \xi \in \R^d\right\}$.\\
		\begin{enumerate}[(a)]
			\item  If $|A| \in m(\R^d)$, but $-|A| \notin m(\R^d)$, then $ \Theta f_0=|A| f_0$.
			\item If $-|A| \in m(\R^d)$, but $|A| \notin m(\R^d)$, then $ \Theta f_0=-|A| f_0$.
			\item   If both $|A|$ and $-|A| \in m(\R^d)$, then $f_0$ can be uniquely written as $f_0=f_{+}+f_{-}$, where $f_{+}, f_{-} \in L^p(\R^d)$ satisfying $\Theta f_{+}=|A| f_{+}$ and $ \Theta f_{-}=-|A| f_{-}$.
			\item 	If neither $|A|$ nor $-|A| \in \R^d$, then $f_0=0$.
		\end{enumerate}
	\end{theorem}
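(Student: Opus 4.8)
The plan is to pass everything to the Fourier transform side, where the multiplier $\Theta$ becomes pointwise multiplication by $m(\xi)$, and then localize on the level sets $\{m(\xi)=\pm|A|\}$. Since the $f_k$ are merely in $L^p$ (not necessarily $L^2$ or Schwartz), I would work with tempered distributions: each $f_k \in \mathcal{S}'(\R^d)$, so $\widehat{f_k}$ makes sense as a distribution, and the relation $\Theta f_k = A f_{k+1}$ becomes $m(\xi)\widehat{f_k} = A\,\widehat{f_{k+1}}$ in $\mathcal{S}'$. Iterating forward gives $m(\xi)^n \widehat{f_0} = A^n \widehat{f_n}$, and iterating backward (using that $A\neq 0$, so $\widehat{f_{k}} = A^{-1} m(\xi)\widehat{f_{k+1}}$ reversed) gives $A^n \widehat{f_{-n}} = m(\xi)^n \widehat{f_{-n}}$, hence on the support of $\widehat{f_0}$ one expects $|m(\xi)/A| \le $ (something controlled) from the forward estimates and $|m(\xi)/A|\ge$ (something controlled) from the backward ones. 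The uniform bound $\|f_k\|_p \le M$ is the key input: it forces $\widehat{f_k}$ to be a fixed-order distribution with uniformly controlled action on test functions (via Hausdorff–Young / the Fourier transform being bounded $L^p \to \mathcal{S}'$ with seminorm bounds depending only on $\|f_k\|_p$).

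The heart of the argument is a \emph{support restriction}: I claim $\operatorname{supp}\widehat{f_0} \subseteq \{\xi : |m(\xi)| = |A|\} = \{m=|A|\}\cup\{m=-|A|\}$. To see the forward direction, take any test function $\psi$ supported in the open set $\{|m(\xi)| > |A|+\delta\}$; then $m(\xi)^{-n}\psi$ is still a legitimate test function whose Schwartz seminorms blow up like a constant times $(|A|+\delta)^{-n}$ times fixed seminorms of $\psi$ — wait, more carefully, $m^{-n}\psi$ has seminorms controlled by $C_\psi \cdot \sup_{\operatorname{supp}\psi}|m|^{-n}$-type quantities, which is \emph{summable/small}. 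Then $\langle \widehat{f_0},\psi\rangle = A^{-n}\langle \widehat{f_n}, m^n \cdot (m^{-n}\psi)\rangle$ — I need to instead write $\langle \widehat{f_0}, \psi \rangle = \langle m^n \widehat{f_0}, m^{-n}\psi\rangle = A^n \langle \widehat{f_n}, m^{-n}\psi\rangle$, and since $|A^n| \cdot \|m^{-n}\psi\|_{(\text{relevant seminorm})} \le M' |A|^n (|A|+\delta)^{-n} \to 0$, we get $\langle \widehat{f_0},\psi\rangle = 0$. Symmetrically, using the backward relation $\widehat{f_{-n}} = A^{-n} m^n \widehat{f_0}$... actually $m^n \widehat{f_0} = A^n\widehat{f_n}$ doesn't directly give the backward bound; instead from $\widehat{f_0} = A^{-n} m^n \widehat{f_{-n}}$ (iterating the reversed relation) we get that on any test function supported where $|m(\xi)| < |A|-\delta$, $\langle\widehat{f_0},\psi\rangle = A^{-n}\langle \widehat{f_{-n}}, m^n\psi\rangle$ with $|A^{-n}| \|m^n\psi\| \le M' (|A|-\delta)^n/|A|^n \to 0$. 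Hence $\widehat{f_0}$ is supported on the set $\{|m| = |A|\}$.

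Once the support is pinned down, the cases follow by decomposing this set. In case (a), $\{|m|=|A|\} = \{m=|A|\}$ since $-|A|\notin m(\R^d)$, so on $\operatorname{supp}\widehat{f_0}$ we have $m(\xi) = |A|$ identically, whence $m(\xi)\widehat{f_0} = |A|\widehat{f_0}$, i.e.\ $\Theta f_0 = |A| f_0$; case (b) is identical with $-|A|$. In case (d), $\{|m|=|A|\}=\emptyset$, so $\widehat{f_0}=0$ and $f_0 = 0$. Case (c) is the most delicate: the two level sets $S_+ = \{m=|A|\}$ and $S_- = \{m=-|A|\}$ are disjoint closed sets, so I would choose a smooth cutoff $\chi$ equal to $1$ near $S_+$ and $0$ near $S_-$, set $\widehat{f_+} := \chi\widehat{f_0}$, $\widehat{f_-} := (1-\chi)\widehat{f_0}$, and check $f_\pm \in L^p$ — this last point is the \textbf{main obstacle}, since multiplying a distribution by a cutoff need not preserve $L^p$ membership. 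The resolution should be that $\chi$ can be taken to be (or to agree, near $\operatorname{supp}\widehat{f_0}$, with) a Fourier multiplier of $L^p$ type — a smooth compactly-supported-in-frequency, or at least Mikhlin-class, function — so that $f_+ = \check\chi * f_0$ and $f_- = f_0 - f_+$ are genuine $L^p$ functions; I would need to verify that the disjointness of $S_\pm$ plus the smoothness of $m$ allows such a choice (if $m$ is bounded this is automatic via a compactly supported $\chi$; in general one uses that near infinity $m$ stays away from $\pm|A|$ or handles the unbounded part separately). Uniqueness of the decomposition follows because $f = g_+ + g_-$ with $\Theta g_\pm = \pm|A|g_\pm$ forces $\widehat{g_+}$ supported in $\{m=|A|\}$ and $\widehat{g_-}$ in $\{m=-|A|\}$, which are disjoint, so the splitting of $\widehat{f_0}$ is forced.
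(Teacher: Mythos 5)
Your support-localization step (that $\operatorname{supp}\widehat{f_0}\subseteq\{\xi:|m(\xi)|=|A|\}$, proved by testing against functions supported where $|m|>|A|$ via forward iteration and where $|m|<|A|$ via backward iteration) is exactly the paper's Theorem~\ref{theorem4.1}, and case (d) then follows as you say. But there are two genuine gaps in how you pass from the support information to the conclusions.

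First, in cases (a) and (b) you assert that since $m(\xi)=|A|$ on $\operatorname{supp}\widehat{f_0}$, it follows that $(m-|A|)\widehat{f_0}=0$. This inference is false for general distributions: a distribution supported on a set need not be annihilated by a smooth function vanishing on that set (e.g.\ $x\,\delta_0'=-\delta_0\neq 0$). Since $\widehat{f_0}$ may have positive order and the level set $\{m=|A|\}$ is typically a hypersurface, transversal-derivative components cannot be excluded this way. The paper's proof of Theorem~\ref{theorem4.2}(a) addresses precisely this: it first shows $(m(\xi)-|A|)^{N+1}\widehat{T}_0=0$ for $N$ exceeding the order of the relevant seminorm, by testing against $(m(\xi)-|A|)^{N+1}g_r(m(\xi)-|A|)\phi(\xi)$ with a shrinking cutoff $g_r$ and balancing the gain $r^{N+1-|\eta|}$ against the loss $r^{-|\tau|}$ as $r\to 0$; it then invokes Lemma~\ref{lemma3.1}, which uses the doubly infinite sequence and the uniform bound a second time to reduce the exponent from $N+1$ to $1$. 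Both steps are missing from your argument, and the first cannot be skipped.

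Second, in case (c) you correctly identify the $L^p$-membership of the cutoff pieces as the main obstacle, but you do not resolve it, and the Mikhlin-multiplier route you sketch is not what is needed. The paper avoids the issue entirely: since $m$ is real, $m^2$ attains $|A|^2$ but not $-|A|^2$, so part (a) applied to $\Theta^2$ and the subsequence $\{f_{2k}\}$ gives $\Theta^2 f_0=|A|^2f_0$; one then sets $f_{\pm}=\bigl(|A|f_0\pm\Theta f_0\bigr)/(2|A|)$, which lie in $L^p$ automatically because $\Theta f_0=Af_1$ with $f_1\in L^p$ by hypothesis. Your uniqueness argument for (c) is essentially the paper's. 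I recommend replacing your localization-by-cutoff strategy for (a)--(c) with the annihilation-plus-Lemma~\ref{lemma3.1} scheme and the $\Theta^2$ trick.
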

	Having clarified the objective, we proceed to formulate a general version of the preceding results so as to place them in a wider analytical framework. The paper is organized as follows. In Section~\ref{section2}, we introduce the basic definitions and set up the functional-analytic framework required for our analysis. In particular, we define a Schwartz-type test function space $S^a(\mathbb{R}^d)$ for $a>0$, together with its dual $S^a(\mathbb{R}^d)'$, the space of exponential tempered distributions of type $a$, whose construction is adapted from the classical theory. The space $S^a(\mathbb{R}^d)$ is chosen so as to consist of smooth functions on $\mathbb{R}^d$ exhibiting exponential decay, and consequently its dual necessarily contains distributions with exponential growth.

We then introduce Fourier multiplier operators on $S^a(\mathbb{R}^d)$ and conclude Section~\ref{section2} with the proof of a fundamental result, referred to as the One Radius Theorem, which reduces the characterization of eigenfunctions of these multipliers to that of eigenfunctions of the Laplacian under suitable conditions.

Section~\ref{section3} is devoted to the proof of the main result of this article, namely an extension of the Strichartz characterization for exponential type $a$ tempered eigendistributions of the Laplacian, obtained via the class of multiplier operators considered herein. Our proof for the main theorems closely follows the methodologies adopted by the authors M. Naik and R. Sarkar in \cite{muna} for characterizing eigenfunctions of the Laplace-Beltrami operator on Riemannian symmetric spaces of non-compact type with real rank one. Finally, in Section~\ref{section4}, we study Fourier multipliers with real-valued and complex-valued symbols on the Schwartz space $\mathcal{S}(\mathbb{R}^d)$, and discuss several special cases as corollaries.
	
	\section{Preliminaries}\label{section2}
	
	\subsection{The Euclidean spherical function and its properties}\label{subsection2.1} In this subsection, we shall study in detail the properties of two important class of functions, first one being the Euclidean spherical function denoted by $\phi_\la$ and the next class denoted by $\psi_{\la}$.
\begin{definition}
		For $\la \in \C$, the Euclidean spherical function $\phi_{\la}$ is defined by 
	
	\begin{equation}\label{Sp.funct}	\phi_\la(x) = \int_{S^{d-1}} e^{i\la x \cdot \om}d\si(\om), ~x\in \R^d   \end{equation}
	where $d\si$ is the normalized surface measure on the unit sphere $S^{d-1}$. 
\end{definition}

Following are some of the important known properties of Euclidean spherical functions. 
	\begin{enumerate}
		\item For $\la \in \R$, $\phi_{\la} \in L^p(\R^d)$ for $ p > \frac{2d}{d-1}$.
		\item For $\la \in \C$, $\phi_\la$ have exponential growth: $|\phi_\la(x)|\leq C_\la |x|^{-\frac{d-1}{2}}e^{|\im(\la)| | x|}.$
		\item $\ds \phi_{\la}(x) = c_d \frac{J_{\frac{d}{2}-1}(\la |x|)}{(\la |x|)^{\frac{d}{2}-1}}$, where $ J_\ap(t)$ is the Bessel function of type $\ap$.
		\item $|\phi_{\la}(x)| \leq \phi_{i \im(\la)}(x)$
		\item For $x \in \R^d, \ds \left|P \Big(\frac{\partial}{\partial \la}\Big)\phi_{\la}(x) \right| \leq C (1+|x|)^r \phi_{i \im(\la)}(x),$ where $P$ is a polynomial of degree $r$.
	\end{enumerate}
Recall that $\Omega_a=\{\la \in \C |   |\im(\la)| \leq a\}$ and $e_1=(1, 0, \ldots, 0)$.  We state and prove certain properties of $\phi_\la$ which are useful in proving the main results.
		\begin{proposition} \label{Prop2.2}
			For a fixed $t>0$ and for any $a>0$, consider any $\la \in \Omega_{a}$ with $\la \neq \pm ia$. Then $$|\phi_{\la}(te_1)| < \phi_{ia}(te_1).$$
		\end{proposition}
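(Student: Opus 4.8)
The plan is to argue entirely from the integral representation \eqref{Sp.funct}. Write $\la=\xi+i\eta$ with $\xi=\re(\la)$ and $\eta=\im(\la)$, so that $|\eta|\le a$, and set $\om_1=\om\cdot e_1\in[-1,1]$. Then $\phi_{\la}(te_1)=\int_{S^{d-1}}e^{i\la t\om_1}\,d\si(\om)$ with $|e^{i\la t\om_1}|=e^{-\eta t\om_1}$, while, using the invariance of $d\si$ under $\om_1\mapsto-\om_1$,
\[
\phi_{ia}(te_1)=\int_{S^{d-1}}e^{-at\om_1}\,d\si(\om)=\int_{S^{d-1}}\cosh\big(a\,t|\om_1|\big)\,d\si(\om).
\]
Applying the triangle inequality for integrals and symmetrizing in the same way gives the chain
\[
\big|\phi_{\la}(te_1)\big|\ \le\ \int_{S^{d-1}}e^{-\eta t\om_1}\,d\si(\om)\ =\ \int_{S^{d-1}}\cosh\big(|\eta|\,t|\om_1|\big)\,d\si(\om)\ \le\ \int_{S^{d-1}}\cosh\big(a\,t|\om_1|\big)\,d\si(\om),
\]
so it suffices to show that, under the hypothesis $\la\ne\pm ia$, at least one of the two inequalities is strict.

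The case $|\eta|<a$ is immediate: since $\cosh$ is strictly increasing on $[0,\infty)$ and $\si\big(\{\om\in S^{d-1}:\om_1\ne0\}\big)=1$, we get $\cosh(|\eta|\,t|\om_1|)<\cosh(a\,t|\om_1|)$ for $\si$-a.e.\ $\om$, so the second inequality is strict and the proposition follows.

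The main case is $|\eta|=a$ with $\xi\ne0$ --- precisely what $\la\ne\pm ia$ excludes once $|\eta|=a$. Here the second inequality is an equality, so everything hinges on the first (the triangle inequality) being strict. The equality condition $\big|\int f\,d\si\big|=\int|f|\,d\si$ for $f(\om)=e^{i\la t\om_1}$ would force $e^{i\xi t\om_1}$ to agree $\si$-a.e.\ with a single unimodular constant $c$; since $|e^{i\la t\om_1}|=e^{-\eta t\om_1}$ never vanishes, this is the same as requiring $\si\big(\{\om\in S^{d-1}:\om_1\in L\}\big)=1$ with $L=\{s\in[-1,1]:e^{i\xi ts}=c\}$. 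As $\xi t\ne0$, $L$ is finite (possibly empty), while each level set $\{\om\in S^{d-1}:\om_1=s_0\}$ is $\si$-null for $d\ge2$; hence $\{\om_1\in L\}$, a finite union of such sets, is $\si$-null, contradicting that it has full measure. Therefore the triangle inequality is strict and $|\phi_{\la}(te_1)|<\phi_{ia}(te_1)$.

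I expect the only delicate point to be this equality-case analysis in the boundary case $|\eta|=a$; the rest is a routine monotonicity estimate. Note that $d\ge2$ is genuinely needed: when $d=1$ the projected measure is $\tfrac12(\delta_{-1}+\delta_{1})$, and in fact $|\phi_{(\pi/t)+ia}(t)|=\cosh(at)=\phi_{ia}(t)$, so the strict inequality fails there.
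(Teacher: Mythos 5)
Your proof is correct. The two arguments coincide on the delicate boundary case $|\im(\lambda)|=a$, $\re(\lambda)\neq 0$: the paper assumes equality of moduli, writes $\phi_{ia}(te_1)=e^{ib}\phi_{\lambda}(te_1)$, takes real parts to get $\int_{S^{d-1}}e^{at\omega_1}\bigl[1-\cos(b-\re(\lambda)t\omega_1)\bigr]\,d\sigma(\omega)=0$, and concludes from nonnegativity that $\omega\mapsto\omega_1$ would have to be constant --- which is exactly your equality-case analysis of the triangle inequality, phrased via the real part rather than via the condition $f=c|f|$ a.e.; your version is slightly more careful in that it handles the level sets measure-theoretically rather than appealing to continuity and connectedness. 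Where you genuinely diverge is the interior case $|\im(\lambda)|<a$: the paper invokes the maximum modulus principle on the strip (which, as stated, leans on the analyticity of $\lambda\mapsto\phi_{\lambda}(te_1)$ and implicitly on the strict monotonicity of $y\mapsto\phi_{iy}(te_1)$), whereas you get strictness by the elementary pointwise comparison $\cosh(|\im(\lambda)|t|\omega_1|)<\cosh(at|\omega_1|)$ off a $\sigma$-null set. Your route is more self-contained and avoids any complex-analytic input; the paper's is shorter given that the bound $|\phi_{\lambda}|\le\phi_{i\im(\lambda)}$ is already listed among the standard properties. Your remark that $d\ge 2$ is genuinely needed (the statement fails for $d=1$ at $\lambda=\pi/t+ia$) is a correct observation about an assumption the paper leaves implicit.
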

		\begin{proof}
			We have already seen that $|\phi_{\la}(te_1)| \leq \phi_{i\im(\la)}(te_1)$, for any $\la \in \C$.  Now the map $\la \mapsto \phi_{\la}(te_1)$ being analytic, the well known maximum modulus principle would yield us $|\phi_{\la}(te_1)| < \phi_{ia}(te_1)$ for $\la \in \Omega_{a}^o$. Therefore it is enough to show that for $\la =r+i a, r\in \R$, $|\phi_{\la}(te_1)| < \phi_{ia}(te_1)$. On the contrary, let us assume $|\phi_{r+ia}(te_1)|= \phi_{ia}(te_1)$ for some $r \in \R$. Then for some $b\in \R$, we have $\phi_{ia}(te_1) = e^{ib} \phi_{r+ia}(te_1)$.
			This implies that $$\ds \int_{S^{d-1}} e^{at e_1\cdot \omega}[1- e^{i(b-rte_1\cdot \omega)}] d\si (\omega) = 0.$$ Hence on taking the real part, we obtain $$\ds \int_{S^{d-1}} e^{ate_1\cdot \omega} [1 - \cos(b-rte_1\cdot \omega)] d\si(\omega) = 0.$$  Since $1-\cos(b-rte_1\cdot \omega)$ is a non-negative continuous function, we must have $[1-\cos(b-rte_1\cdot \omega)]=0$, for every $\omega \in S^{d-1}$. Thus the mapping $\omega \mapsto rte_1\cdot \omega$ must be constant which is not true. Hence $|\phi_{\la}(te_1)| < \phi_{ia}(te_1)$ for all $\la \in  \Omega_{a}$.
			
		\end{proof}
		
		\begin{proposition}\label{prop 2.3}
			For a fixed $x\in \R^d-\{0\}$ and for any $a>0, ~|\phi_{\la}(x)| \to 0$ uniformly on $\Omega_{a}$, as $|\la| \to \infty$.
		\end{proposition}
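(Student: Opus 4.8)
The plan is to extract the decay directly from the closed form of $\phi_\la$ in terms of Bessel functions (property (3) above), together with the classical uniform asymptotics of Bessel functions. Property (2) as stated is not quite enough, since the constant $C_\la$ appearing there is not known to decay in $\la$; returning to the Bessel representation makes the $\la$-dependence explicit. The mechanism is that on the strip $\Omega_a$ one has $|\im(\la)| \le a$, so the factor $e^{|\im(\la)||x|}$ stays bounded by $e^{a|x|}$, while $|\la| \to \infty$ inside $\Omega_a$ forces $|\re(\la)| \to \infty$, producing genuine oscillatory decay.

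First I would record the elementary symmetry $\phi_{-\la} = \phi_\la$, obtained from \eqref{Sp.funct} by the antipodal substitution $\om \mapsto -\om$; since $\Omega_a$ is invariant under $\la \mapsto -\la$, this reduces the problem to the half-strip $\{\la \in \Omega_a : \re(\la) \ge 0\}$. Then, with $x \in \R^d \setminus \{0\}$ fixed and $z := \la |x|$, property (3) gives $\phi_\la(x) = c_d\, z^{-(d/2-1)} J_{d/2-1}(z)$, where $|\arg z| = |\arg \la| \le \pi/2$, $|\im z| = |\im(\la)||x| \le a|x|$, and $|z| = |\la||x|$. Invoking the standard estimate $|J_\nu(z)| \le C_\nu\, |z|^{-1/2} e^{|\im z|}$, valid uniformly for $|z| \ge 1$ in the sector $|\arg z| \le \pi/2$, gives, for all $\la \in \Omega_a$ with $|\la| \ge 1/|x|$,
\[ |\phi_\la(x)| \le c_d C_\nu\, (|\la||x|)^{-(d/2-1)} (|\la||x|)^{-1/2} e^{a|x|} = \bigl(c_d C_\nu\, |x|^{-(d-1)/2} e^{a|x|}\bigr)\, |\la|^{-(d-1)/2}. \]
Since $d \ge 2$, the exponent $(d-1)/2$ is strictly positive, and the right-hand side is a constant (depending only on $x$, $a$, $d$) times $|\la|^{-(d-1)/2}$; it depends on $\la$ only through $|\la|$ and tends to $0$ as $|\la| \to \infty$. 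This is precisely the asserted uniform decay on $\Omega_a$.

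The one point demanding care is the uniform-in-argument Bessel bound: it holds on the closed sector $|\arg z| \le \pi/2$ but degenerates near the negative real axis, which is exactly why the preliminary reduction via $\phi_{-\la} = \phi_\la$ is essential rather than cosmetic. Everything else is bookkeeping. Should one prefer to bypass Bessel asymptotics, an equivalent route is to reduce \eqref{Sp.funct} to the one-dimensional integral $\phi_\la(|x|e_1) = c \int_{-1}^{1} e^{i\la|x|t}(1-t^2)^{(d-3)/2}\, dt$, write the integrand as $e^{i\re(\la)|x|t}\, w_{\im(\la)}(t)$ with $w_s(t) = e^{-s|x|t}(1-t^2)^{(d-3)/2}$, note that $s \mapsto w_s$ is Lipschitz into $L^1(-1,1)$ so that $\{w_s : |s| \le a\}$ is compact there, and apply the Riemann--Lebesgue lemma uniformly over this compact family, using that $|\la| \to \infty$ in $\Omega_a$ forces $|\re(\la)| \to \infty$.
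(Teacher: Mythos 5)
Your argument is correct, but it takes a genuinely different route from the paper. The paper works directly with the one-dimensional integral $\phi_\la(x)=C_d\int_{-1}^{1}e^{i\alpha rt}\,g_r(\beta,t)\,dt$, $g_r(\beta,t)=C_d(1-t^2)^{(d-3)/2}e^{-\beta rt}$, and runs the classical shift-by-half-a-period proof of the Riemann--Lebesgue lemma, checking that the resulting $L^1$ modulus-of-continuity bound is uniform over $|\beta|\le a$; this is exactly the ``compact family in $L^1$'' alternative you sketch in your last paragraph, made concrete. Your main argument instead goes through the Bessel representation $\phi_\la(x)=c_d\,z^{-(d/2-1)}J_{d/2-1}(z)$, $z=\la|x|$, the evenness reduction to $|\arg z|\le\pi/2$, and the uniform sectorial bound $|J_\nu(z)|\le C_\nu|z|^{-1/2}e^{|\im z|}$. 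That route buys a quantitative rate, $|\phi_\la(x)|\le C(x,a,d)\,|\la|^{-(d-1)/2}$, which is strictly more than the qualitative decay the paper proves (and it recovers property (2) with a constant that is visibly uniform on the strip), at the price of importing nontrivial Bessel asymptotics; the paper's route is rate-free but elementary and self-contained. Both arguments, like the proposition itself, tacitly require $d\ge 2$ (for $d=1$ one has $\phi_\la(x)=\cos(\la x)$, which does not decay for real $\la$), and your observation that the exponent $(d-1)/2$ must be strictly positive makes this hypothesis explicit where the paper leaves it implicit. Your care about the degeneration of the Bessel bound near the negative real axis, handled by the evenness $\phi_{-\la}=\phi_\la$, is well placed.
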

		\begin{proof} For $\la=\ap+i\bt$, consider
			\begin{align*}
				\phi_\la(x) & = \int_{S^{d-1}} e^{i\la x\cdot \om}d\si(\om) = C_d \int_{-1}^{1} e^{i(\ap + i\bt)rt } (1-t^2)^\frac{d-3}{2} dt = \int_{-1}^{1} g_r(\bt, t)  e^{i \ap rt} dt 
			\end{align*} where $g_r (\bt, t) = C_d (1-t^2)^\frac{d-3}{2} e^{-\bt rt}$.
				On a suitable change of variable, we obtain $$ \phi_\la(x) = \int_{-1-\frac{\pi}{\ap r}}^{1- \frac{\pi}{\ap r}} g_r (\bt, t+ \frac{\pi}{\ap r}) e^{i \ap rt} e^{i \pi} dt. $$
				Therefore \begin{align*}
					2 |\phi_{\la}(x)| & = \bigg| \int_{-1}^{1} g_r(\bt, t)  e^{i \ap rt} dt - \int_{-1-\frac{\pi}{\ap r}}^{1- \frac{\pi}{\ap r}}  g_r(\bt, t+ \frac{\pi}{\ap r}) e^{i \ap rt} dt \bigg| \\& \leq \int_{-1}^{1- \frac{\pi}{\ap r}} |g_r(\bt, t) - g_r(\bt, t+ \frac{\pi}{\ap r})| dt + \int_{1-\frac{\pi}{\ap r}}^{1} |g_r(\bt, t)| dt + \int_{-1}^{-1+\frac{\pi}{\ap r}} |g_r(\bt, t) | dt    
				\end{align*} which goes to zero as $\ap\to \infty$ uniformly on $ |\bt| \leq a$.
		\end{proof}
		
		In what follows let  \begin{equation}\label{eq:derofphi} \ds \phi_{\mu,k}(x) := \frac{\partial^k}{\partial \la^k} \phi_{\la}(x) \Big|_{\la = \mu},\end{equation}  for $\mu \in \C$ and $k \in \mathbb{N}$.
		\begin{proposition}\label{prop 2.4}
			For a fixed $t>0$, we have $\phi_{0,2}(te_1) \neq 0$ and $\phi_{\la,1}(te_1) \neq 0$ for non-zero $\la \in i\R$.
		\end{proposition}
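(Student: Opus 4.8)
The statement concerns the behavior of the radial function $x \mapsto \phi_\lambda(te_1)$ and its $\lambda$-derivatives at specific points, so the natural tool is the Bessel-function representation (property (3) above), namely $\phi_\lambda(te_1) = c_d\, (\lambda t)^{-(d/2-1)} J_{d/2-1}(\lambda t)$, together with the power-series expansion of $J_\alpha$. Equivalently, and more convenient here, I would start from the integral $\phi_\lambda(te_1) = C_d \int_{-1}^{1} e^{i\lambda t s}(1-s^2)^{(d-3)/2}\,ds$ and expand the exponential: $\phi_\lambda(te_1) = \sum_{n\geq 0} \frac{(i\lambda t)^n}{n!}\, C_d\int_{-1}^1 s^n (1-s^2)^{(d-3)/2}\,ds$. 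The odd moments vanish by symmetry, so only even powers of $\lambda$ survive, and $\phi_\lambda(te_1) = \sum_{m\geq 0} c_{2m}\,(-1)^m \lambda^{2m} t^{2m}$ with all $c_{2m}>0$ (each being a positive constant times a positive Beta integral $\int_{-1}^1 s^{2m}(1-s^2)^{(d-3)/2}ds$).

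\textbf{The claim $\phi_{0,2}(te_1)\neq 0$.} Differentiating the series twice in $\lambda$ and evaluating at $\lambda=0$ kills every term except the $m=1$ term, giving $\phi_{0,2}(te_1) = 2\cdot(-1)\,c_2\, t^2 = -2c_2 t^2$. Since $c_2 = C_d \int_{-1}^1 s^2(1-s^2)^{(d-3)/2}\,ds > 0$ and $t>0$, this is strictly negative, hence nonzero. (Alternatively one can invoke $\Delta\phi_\lambda = \lambda^2 \phi_\lambda$ and the fact that $\phi_{0,2}$ is, up to a constant, the evaluation at $te_1$ of a nonzero radial solution of $\Delta u = u_{\text{something}}$, but the direct series computation is cleanest.)

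\textbf{The claim $\phi_{\lambda,1}(te_1)\neq 0$ for $0\neq\lambda\in i\mathbb{R}$.} Write $\lambda = ia$ with $a\in\mathbb{R}\setminus\{0\}$. From the series, $\phi_{\lambda,1}(te_1) = \frac{\partial}{\partial\lambda}\sum_m c_{2m}(-1)^m\lambda^{2m}t^{2m} = \sum_{m\geq 1} 2m\, c_{2m}(-1)^m \lambda^{2m-1} t^{2m}$. At $\lambda = ia$ this equals $\sum_{m\geq 1} 2m\,c_{2m}(-1)^m (ia)^{2m-1} t^{2m}$; since $(ia)^{2m-1} = i\,(-1)^{m-1} a^{2m-1}$, each term carries the common factor $i\cdot(-1)^{m-1}\cdot(-1)^m = -i$, so $\phi_{\lambda,1}(te_1) = -i\sum_{m\geq 1} 2m\, c_{2m}\, a^{2m-1} t^{2m}$. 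For $a>0$ every summand is a positive real number times $-i$, so the sum is a nonzero purely imaginary number; for $a<0$ the overall sign flips but it is still nonzero. Concretely, this is $-i\,\frac{d}{da}\big(\phi_{ia}(te_1)\big)$ up to harmless bookkeeping, and $\phi_{ia}(te_1) = C_d\int_{-1}^1 \cosh(ats)(1-s^2)^{(d-3)/2}ds$ is a strictly increasing function of $a$ on $[0,\infty)$ (since $\frac{d}{da}\cosh(ats) = ts\sinh(ats)$ has the same sign as $a$ pointwise for $s$ of either sign — more carefully, pair $s$ with $-s$ to see $\frac{d}{da}\phi_{ia}(te_1) = C_d\int_0^1 ts[\sinh(ats)-\sinh(-ats)](1-s^2)^{(d-3)/2}ds = 2C_d t\int_0^1 s\sinh(ats)(1-s^2)^{(d-3)/2}ds > 0$ for $a>0$), so its derivative is nonzero away from $a=0$, which is exactly the assertion.

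\textbf{Main obstacle.} There is no serious obstacle; the only point requiring a little care is justifying term-by-term differentiation of the power series, which is immediate since the series for $\phi_\lambda(te_1)$ has infinite radius of convergence in $\lambda$ (it is entire, being a uniform limit on compacta of the exponential integral). The substantive content is simply the positivity of the Beta-type moment integrals $\int_{-1}^1 s^{2m}(1-s^2)^{(d-3)/2}\,ds$, which is clear since the integrand is nonnegative and not identically zero.
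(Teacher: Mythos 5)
Your proof is correct, but it follows a genuinely different route from the paper. The paper computes $\phi_{\lambda,2}(te_1)$ at $\lambda=i\beta$ directly as the (strictly negative) integral $-\int_{S^{d-1}} t^2(e_1\cdot\omega)^2 e^{-\beta t e_1\cdot\omega}\,d\sigma(\omega)$, which handles the first claim; for the second claim it invokes the maximum modulus principle to show that $y\mapsto \phi_{iy}(te_1)$ is strictly increasing, and then argues that a vanishing first derivative together with a nonvanishing second derivative would force a local extremum, contradicting strict monotonicity. You instead expand $\phi_\lambda(te_1)$ as an entire power series $\sum_m c_{2m}(-1)^m\lambda^{2m}t^{2m}$ with strictly positive even moments $c_{2m}$ and read off both claims: the second derivative at $0$ is $-2c_2t^2<0$, and on the imaginary axis the first derivative becomes $-i\sum_{m\ge 1}2m\,c_{2m}a^{2m-1}t^{2m}$, a sum of terms of a single sign. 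Your computation in fact \emph{proves} the strict monotonicity of $a\mapsto\phi_{ia}(te_1)$ directly (you also give the $\sinh$ pairing argument), rather than importing it from the maximum principle as the paper does, so your argument is more elementary and self-contained; it also avoids a small sign slip in the paper, which asserts that the second derivative of $f(y)=\phi_{iy}(te_1)$ \emph{with respect to $y$} is negative, whereas the negative quantity computed there is $\partial_\lambda^2\phi_\lambda(te_1)|_{\lambda=iy}$ (one has $f''(y)=-\phi_{\lambda,2}(te_1)|_{\lambda=iy}>0$); the paper's conclusion survives either sign, and your version does not have this issue. The only points worth making explicit in a polished write-up are the validity of the $\int_{-1}^1(\cdot)(1-s^2)^{(d-3)/2}\,ds$ representation (for $d\ge 2$; the case $d=1$ is $\cos(\lambda t)$ and is immediate) and the term-by-term differentiation, which you correctly justify by entirety.
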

		\begin{proof}
			As we know $\phi_{\la}(te_1) =\ds  \int_{S^{d-1}} e^{i\la te_1\cdot \omega} d\si(\omega)$. Thus for $\la \in i\R, \phi_\la(te_1) > 0$. Now consider $$\phi_{\la,2}(te_1)\Bigr \rvert_{\la= i\bt} = \frac{\partial^2}{\partial \la^2} \phi_\la (te_1) \Bigr \rvert_{\la= i\bt} = - \ds \int_{S^{d-1}} t^2(e_1\cdot\omega)^2 e^{-\bt te_1\cdot\omega} d\si (\omega) < 0.$$ So in particular, $\phi_{0,2}(te_1) \neq 0.$

			It is already known that for a fixed $t>0$, the map $\la \mapsto \phi_\la(te_1)$ is a non-constant entire function. By the maximum modulus principle, we obtain $\phi_{iy_1}(te_1) < \phi_{iy_2}(te_1)$ for $0 \leq y_1 < y_2$. That is, the function $f: y \mapsto \phi_{iy}(te_1) $ is strictly increasing. And the second derivative of $f$ with respect to $y$ being strictly negative as seen above, $f$ has nonzero derivative at any $y>0$, because otherwise it will have a local maximum. Hence $\phi_{\la,1}(te_1) \neq 0$ for any nonzero $\la \in i\R$.
		\end{proof}
		Here we shall introduce another class of functions denoted by $\psi_{\la}$, that will be used recurrently.
		\begin{definition} \label{defn 2.5}
			For $\la \in \C$, we define $\psi_{\la}$ as  \begin{equation}\label{eq:fcub} \psi_{\la}(\xi) = \frac{1}{|B(0,1)|} \int_{B(0,1)} e^{-i \la \xi \cdot y} dy,~ \xi \in \R^d \end{equation} where $B(0,1)$ denotes the ball centered at origin with radius 1 in $\R^d$.
		\end{definition}
		
		\begin{proposition} 	
			For a fixed $x\in \R^d-\{0\}$ and for any $a>0$, consider any $\la \in \Omega_{a}$ with $\la \neq \pm ia$. Then $|\psi_{\la}(te_1)| < \psi_{ia}(te_1)$.
			\end{proposition}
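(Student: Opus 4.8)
The plan is to reproduce, essentially verbatim, the argument used for $\phi_\la$ in Proposition~\ref{Prop2.2}, with the unit sphere $S^{d-1}$ replaced by the solid ball $B(0,1)$. Writing $u=e_1\cdot y$, we have $\ds\psi_\la(te_1)=\frac{1}{|B(0,1)|}\int_{B(0,1)}e^{-i\la t u}\,dy$, and since the ball is rotation invariant $\psi_\la$ is radial, so it genuinely suffices to work at the point $te_1$. The first step I would record is the elementary bound: for $\la=r+is$ with $s=\im(\la)$ one has $|e^{-i\la tu}|=e^{stu}$, whence, using the symmetry $u\mapsto-u$ of $B(0,1)$,
\[
|\psi_\la(te_1)|\le \frac{1}{|B(0,1)|}\int_{B(0,1)}e^{stu}\,dy=\psi_{is}(te_1)=\psi_{i|s|}(te_1).
\]
Moreover $y\mapsto\psi_{iy}(te_1)$ is increasing on $[0,\infty)$ — its second derivative $\ds\frac{t^2}{|B(0,1)|}\int_{B(0,1)}u^2e^{ytu}\,dy$ is positive and its first derivative vanishes at $0$ by symmetry — so $|\psi_\la(te_1)|\le\psi_{ia}(te_1)$ on the closed strip $\Omega_a$. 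Thus $\la\mapsto\psi_\la(te_1)$ is an entire function, bounded on $\Omega_a$, and it is non-constant since $\psi_0(te_1)=1<\psi_{ia}(te_1)$ for $a,t>0$.

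Next I would invoke the maximum modulus principle on the strip $\Omega_a$ (legitimate because of the boundedness just noted), which yields $|\psi_\la(te_1)|<\psi_{ia}(te_1)$ for every $\la$ in the open strip $\Omega_a^o$. So it remains to treat the two boundary lines. Using once more the substitution $u\mapsto-u$, one checks $\psi_{r-ia}(te_1)=\psi_{-r+ia}(te_1)$, so it is enough to prove $|\psi_{r+ia}(te_1)|<\psi_{ia}(te_1)$ for every $r\neq0$, the value $r=0$ corresponding to the excluded point $ia$ (and $-ia$ after the symmetry).

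The heart of the proof is this boundary case. I would argue by contradiction: suppose $|\psi_{r+ia}(te_1)|=\psi_{ia}(te_1)$ for some $r\neq0$, and choose $b\in\R$ with $\psi_{ia}(te_1)=e^{ib}\psi_{r+ia}(te_1)$. Since $\ds\psi_{r+ia}(te_1)=\frac{1}{|B(0,1)|}\int_{B(0,1)}e^{atu}e^{-irtu}\,dy$, this forces $\ds\int_{B(0,1)}e^{atu}\big[1-e^{i(b-rtu)}\big]\,dy=0$, and taking real parts,
\[
\int_{B(0,1)}e^{atu}\,\big[1-\cos(b-rtu)\big]\,dy=0 .
\]
The integrand is continuous and non-negative, hence vanishes identically on $B(0,1)$; evaluating along the $e_1$-axis forces $\cos(b-rtu)=1$ for all $u\in(-1,1)$, i.e. the affine map $u\mapsto b-rtu$ sends the interval $(-1,1)$ into the discrete set $2\pi\Z$. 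As $rt\neq0$ this map is non-constant, so its image is a nondegenerate interval — a contradiction. Hence $|\psi_\la(te_1)|<\psi_{ia}(te_1)$ for all $\la\in\Omega_a$ with $\la\neq\pm ia$.

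The only real obstacle is this last step: the maximum modulus principle by itself delivers only the non-strict bound on $\partial\Omega_a$, and one must exploit the rigidity of the equality case — continuity and non-negativity forcing the integrand to vanish, combined with the fact that $u=e_1\cdot y$ sweeps out a full interval — to upgrade it to strict inequality, exactly as in Proposition~\ref{Prop2.2}. A minor technical point worth stating is that the maximum-modulus (Phragmén--Lindelöf) step is being applied on an unbounded strip, which is precisely why the elementary $L^\infty$ bound of the first step is recorded beforehand.
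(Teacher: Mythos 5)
Your proposal is correct and is exactly what the paper intends: the paper's own proof consists of the single remark that the result ``follows from a step by step adaptation of the arguments used in the proof of Proposition~\ref{Prop2.2},'' and your argument is precisely that adaptation (elementary bound $|\psi_\la(te_1)|\le\psi_{i|\im\la|}(te_1)$, maximum modulus on the strip, and the rigidity of the equality case on the boundary via the nonnegative integrand). Your explicit justification of the non-strict bound before invoking the maximum principle on the unbounded strip, and the reduction of the lower boundary line to the upper one by the symmetry $u\mapsto -u$, are welcome details that the paper leaves implicit.
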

			\begin{proof}
				Proof of the above proposition follows from a step by step adaptation of the arguments used in the proof of Proposition \ref{Prop2.2}.
			\end{proof}
				\begin{proposition}\label{prop 2.7}
				For a fixed $x\in \R^d-\{0\}$ and for any $a>0, |\psi_{\la}(x)| \to 0$ uniformly on $\Omega_{a}$, as $|\la| \to \infty$.
			\end{proposition}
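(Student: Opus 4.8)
The statement to prove is Proposition~\ref{prop 2.7}: for fixed $x \in \R^d \setminus \{0\}$ and any $a>0$, $|\psi_\la(x)| \to 0$ uniformly on $\Omega_a$ as $|\la| \to \infty$. Here $\psi_\la(\xi) = \frac{1}{|B(0,1)|}\int_{B(0,1)} e^{-i\la \xi\cdot y}\,dy$.

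The plan is to mimic exactly the proof of Proposition~\ref{prop 2.3}, which did the same thing for $\phi_\la$. The key is that $\psi_\la(x)$, like $\phi_\la(x)$, is (up to constants and a change of variables) an oscillatory integral of the form $\int_I h_r(\beta, s) e^{i\alpha r s}\,ds$ over a bounded interval $I$, where $\la = \alpha + i\beta$ with $|\beta| \le a$, $r = |x|$, and $h_r$ is a continuous (indeed smooth) function on $I$ with an $a$-dependent but $\alpha$-independent bound.

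First I would reduce $\psi_\la(x)$ to a one-dimensional integral. Writing $x = r\omega_0$ with $r=|x|>0$ and integrating over the ball in coordinates adapted to the direction $\xi\cdot y$, one gets $\psi_\la(x) = \int_{-1}^{1} h_r(\beta, s)\, e^{-i\alpha r s}\,ds$ where $h_r(\beta, s) = c_d (1-s^2)^{(d-1)/2} e^{\beta r s}$ arises from the volume of the slice $\{y \in B(0,1): \omega_0\cdot y = s\}$ weighted by $e^{\beta r s}$; the constant and exponent differ from the spherical case (here $(1-s^2)^{(d-1)/2}$ rather than $(1-s^2)^{(d-3)/2}$, and one power of $r$ appears in a harmless way), but the structure is identical. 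Crucially $h_r(\beta,\cdot)$ is continuous on $[-1,1]$ and bounded by $c_d e^{a r}$ uniformly for $|\beta| \le a$, and it is uniformly continuous on $[-1,1]$ with modulus of continuity controlled uniformly in $|\beta|\le a$.

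Next I would run the translation trick from Proposition~\ref{prop 2.3}: substitute $s \mapsto s + \pi/(\alpha r)$ in the integral to pick up a factor $e^{i\pi} = -1$, then average the original and translated expressions to obtain
$$2|\psi_\la(x)| \le \int_{-1}^{1-\pi/(\alpha r)} \big|h_r(\beta,s) - h_r\big(\beta, s + \tfrac{\pi}{\alpha r}\big)\big|\,ds + \int_{1-\pi/(\alpha r)}^{1} |h_r(\beta,s)|\,ds + \int_{-1}^{-1+\pi/(\alpha r)} |h_r(\beta,s)|\,ds.$$
The last two terms are at most $c_d e^{ar}\cdot \pi/(\alpha r) \to 0$ as $\alpha \to \infty$, uniformly in $|\beta|\le a$; the first term tends to $0$ by uniform continuity of $h_r(\beta,\cdot)$, again uniformly in $|\beta|\le a$. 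This handles $|\alpha| \to \infty$. Since on $\Omega_a$ the condition $|\la|\to\infty$ with $|\beta|\le a$ forces $|\alpha|\to\infty$, this suffices, and one concludes $|\psi_\la(x)| \to 0$ uniformly on $\Omega_a$.

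The only mild obstacle is writing down the slice function $h_r$ correctly — i.e., carrying out the reduction of the ball integral to a one-dimensional integral against the right weight $(1-s^2)^{(d-1)/2}$ — but this is a routine computation, and once it is in place the argument is verbatim that of Proposition~\ref{prop 2.3}. Indeed, the cleanest write-up simply asserts that the reduction puts $\psi_\la(x)$ in the same form as $\phi_\la(x)$ and invokes the proof of Proposition~\ref{prop 2.3} \emph{mutatis mutandis}, as the authors did for the preceding proposition on $|\psi_\la(te_1)| < \psi_{ia}(te_1)$.
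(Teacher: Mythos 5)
Your proof is correct, but it takes a genuinely different route from the paper's. The paper does not rerun the oscillatory-integral argument for the ball: it instead writes $\psi_{\la}$ as a radial average of the spherical function,
$\psi_{\la}(t) = \frac{1}{|B(0,t)|}\int_{0}^{1}\phi_{\la}(rt\omega)\,r^{d-1}\,dr$,
and then invokes Proposition~\ref{prop 2.3} for the inner spherical functions. You instead reduce the ball integral directly to a one-dimensional integral against the slice weight $(1-s^2)^{(d-1)/2}$ and repeat the translation trick of Proposition~\ref{prop 2.3} verbatim. Both approaches are valid. Yours is self-contained and, if anything, easier than the spherical case, since $(1-s^2)^{(d-1)/2}$ is bounded and continuous on $[-1,1]$ for every $d\geq 1$ (whereas $(1-s^2)^{(d-3)/2}$ is unbounded when $d=2$), and your observation that the family $h_r(\beta,\cdot)$, $|\beta|\leq a$, is equicontinuous is exactly what makes the first term in the averaged estimate go to zero uniformly. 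The paper's reduction is shorter on the page but quietly leans on a step you avoid: $\phi_{\la}(rt\omega)\to 0$ only for each fixed $r>0$, not uniformly down to $r=0$ (where $\phi_{\la}(0)=1$), so passing the limit through the radial integral requires splitting off a neighbourhood of $r=0$ and using the uniform bound $|\phi_{\la}(rt\omega)|\leq e^{art}$ together with the smallness of $\int_0^{\epsilon} r^{d-1}\,dr$ --- a dominated-convergence-type argument the paper leaves implicit.
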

			\begin{proof}
				Note that, on applying a suitable change of variable on the definition of $\psi_\la$ defined as in \eqref{eq:fcub}, we obtain $$ \psi_{\la}(t) = \frac{1}{|B(0,t)|} \int_{0}^{1} \phi_{\la}(rt \omega) r^{d-1} dr .$$ Now by using Proposition \ref{prop 2.3} we obtain the required proof. 
			\end{proof}
		\begin{proposition}\label{prop 2.8}
		For a fixed $t>0$, we have $\psi_{0,2}(te_1) \neq 0$ and $\psi_{\la,1}(te_1) \neq 0$ for non-zero $\la \in i\R$.
	\end{proposition}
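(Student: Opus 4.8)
The plan is to transcribe the proof of Proposition~\ref{prop 2.4} almost verbatim, replacing $\phi_\la$ by $\psi_\la$ and the normalized spherical integral of \eqref{Sp.funct} by the normalized integral over $B(0,1)$ appearing in \eqref{eq:fcub}. Since $B(0,1)$ is bounded and $\la\mapsto e^{-i\la\xi\cdot y}$ is entire with bounds locally uniform in $\la$, all the differentiations under the integral sign below are legitimate. First I would record that for $\la=is$, $s\in\R$, the integrand in \eqref{eq:fcub} at $\xi=te_1$ equals $e^{s\,t\,e_1\cdot y}>0$, so $\psi_{is}(te_1)>0$; thus $\la\mapsto\psi_\la(te_1)$ is real and strictly positive on $i\R$. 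Differentiating \eqref{eq:fcub} twice in $\la$ and setting $\la=is$ gives $\psi_{\la,2}(te_1)\big|_{\la=is}=-\,|B(0,1)|^{-1}\int_{B(0,1)}(t\,e_1\cdot y)^2 e^{s\,t\,e_1\cdot y}\,dy<0$, since the integrand is nonnegative and positive on a set of positive measure; taking $s=0$ yields $\psi_{0,2}(te_1)\neq 0$.

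For the second assertion I would put $h(s):=\psi_{is}(te_1)$. Two facts, both by differentiation under the integral sign: $h''(s)=|B(0,1)|^{-1}\int_{B(0,1)}(t\,e_1\cdot y)^2 e^{s\,t\,e_1\cdot y}\,dy>0$, so $h$ is strictly convex on $\R$; and $h'(0)=|B(0,1)|^{-1}\int_{B(0,1)}(t\,e_1\cdot y)\,dy=0$ by the symmetry $y\mapsto -y$ of the ball. Hence $h'$ is strictly increasing and vanishes only at $0$, so $h'(s)\neq 0$ for $s\neq 0$; since $\tfrac{\partial}{\partial\la}\psi_\la(te_1)\big|_{\la=is}=-i\,h'(s)$, this yields $\psi_{\la,1}(te_1)\neq 0$ for every non-zero $\la\in i\R$. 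Alternatively, one may argue exactly as in Proposition~\ref{prop 2.4}: $\la\mapsto\psi_\la(te_1)$ is a non-constant entire function (non-constant because $h''>0$), so the earlier proposition asserting $|\psi_\la(te_1)|<\psi_{ia}(te_1)$ for $\la\in\Omega_a$ with $\la\neq\pm ia$ makes $s\mapsto\psi_{is}(te_1)$ strictly increasing on $[0,\infty)$, and since $h''$ has constant sign $h'$ cannot vanish at an interior point without creating a local extremum.

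I do not expect a genuine obstacle: the substantive content already resides in Proposition~\ref{prop 2.4} and in the maximum-modulus estimate of Proposition~\ref{Prop2.2} (transferred to $\psi$ in the earlier unlabeled proposition). The only care needed is routine --- justifying differentiation under the integral sign over the bounded set $B(0,1)$, and tracking the factors of $i$ that relate $\partial/\partial\la$ at $\la=is$ to $d/ds$.
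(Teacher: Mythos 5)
Your proposal is correct and matches the paper, whose entire proof of this proposition is the remark that it ``follows by the similar line of argument as in the proof of Proposition~\ref{prop 2.4}''; you have simply written out that adaptation. One small improvement worth noting: for the nonvanishing of $\psi_{\la,1}(te_1)$ your primary argument (strict convexity of $h(s)=\psi_{is}(te_1)$ together with $h'(0)=0$ from the symmetry $y\mapsto -y$ of the ball) is cleaner than the paper's route, which first invokes the maximum modulus principle to get strict monotonicity and then uses the sign of the second derivative; your alternative paragraph reproduces that latter route faithfully, so both readings are covered.
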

	\begin{proof}
		Proof follows by the similar line of argument as in the proof of Proposition \ref{prop 2.4}.
	\end{proof}
		\subsection{Schwartz type space $S^a(\R^d)$ and $H_e(\Omega_{a})$}\label{subsection2.2}:

		We shall introduce a \textit{Schwartz type space} as follows. For a fixed real number $a \geq 0$, let $S^a(\R^d)$ be the space of all $C^{\infty}$ functions on $\R^d$ such that 
		
		$$ \gm _{m,\ap}(f) = \sup_{x \in \R^d} e^{a|x|} (1+|x|)^m |D^\ap f(x)| < \infty$$
		for every non-negative integer $m$ and for all multi-index $\ap=(\ap_1, \ldots, \ap_d)\in \mathbb{N}_0^d$, where $$\displaystyle D^\ap f = \frac{\partial^{|\ap|}f}{{\partial x_1}^{\ap_1}...\partial {x_d}^{\ap_d}}.$$

We note that for $ a=0$, the space $\sa(\R^d)$ coincides with the standard Schwartz space $\mathcal{S}(\R^d)$. Let $\sar(\R^d)$ denote the space of all radial functions in $S^a(\R^d)$. For $a>0$, we recall that $\Omega_{a} = \{\la \in \C | |\im(\la)|\leq a\}$. Let $H_e(\Omega_{a})$ be the set of all even holomorphic functions on $\Omega_{a}^o$ which are continuous on $\Omega_{a}$ and satisfying for all non-negative integers $m$ and for all multi-indices $\ap$ 
		$$\mu_{m,\ap}(\phi) = \sup_{\la \in \Omega_{a}} (1+|\la|)^m \Big|\Big(\frac{d}{d\la}\Big)^\ap \phi (\la)\Big|< \infty. $$
		
		It can be seen that both spaces $S^a(\R^d)$ and $H_e(\Omega_{a})$ are Fr\'echet spaces with respect to the topology induced by the family of seminorms given by $\gm _{m,\ap}$ and $\mu_{m,\ap}$ respectively.
		
		Now for any function $f \in \sa(\R^d)$, we define the spherical Fourier transform of $f$ as
		$$ \mathcal{H}f(\la)= \int_{\R^d} f(x) \phi_{\la}(x)dx,~\la \in \Omega_a. $$
		For $f \in \sar(\R^d) $, it is easy to see  that $ \mathcal{H}f$ is an even holomorphic function on $\Omega_{a}$. In fact, we have that $\mathcal{H}: \sar(\R^d)\to H_e(\Omega_{a})$ is a topological isomorphism.  For more details, see   \cite[Theorem 2.1]{natan}. Using classical Fourier inversion formula, we can show that $\mathcal{H}^{-1}: H_e(\Omega_{a}) \to \sar(\R^d)$ with 
\begin{equation}\label{eq2}
	\mathcal{H}^{-1}\phi(x)=\int_{0}^\infty \phi(\la)\phi_\la(x) \la^{d-1}d\la
\end{equation}	 for all $ \phi \in H_e(\Omega_{a})$.\\
		At this point we intend to provide a vital result which could be useful while proving the main theorems. We omit its proof as it only requires an adaptation of the same strategy as devised in the proof of Theorem 3.2 provided in  \cite{weit}.
		\begin{proposition}\label{prop2.9}
			For $0<b<a$, let $ \set{g_0,g_1,...,g_r}$ be a finite collection of functions in $\sar(\R^d)$ with $g_0$ satisfying $$\ds \lim_{|t| \to \infty} \sup e^{\frac{-\pi |t|}{2a}} \log |\mathcal{H}{g_0}(t)|=0.$$ Let $A$ be the set of common zeroes of the functions $\mathcal{H}{g_0}, \mathcal{H}{g_1},... \mathcal{H}{g_r}$ in $\Omega_{a}$ and $n_\la $ denote the minimal order of zero of the functions $\mathcal{H}{g_i}$ at the point $\la$. Assume that $A$ is finite and $A \subseteq  \Omega_{b}$. Then the ideal generated by 
			$ \set{g_0,g_1,...,g_r}$ is dense in the closed subspace of $\sbr(\R^d)$ of functions whose spherical Fourier transform vanish at all $\la \in A$ with order greater than or equal to $ n_{\la}.$
		\end{proposition}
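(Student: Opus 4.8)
\emph{Proof proposal.} The plan is to transfer the problem to the transform side via the isomorphism $\mathcal H\colon\sbr(\R^d)\to H_e(\Omega_{b})$, which carries convolution to pointwise multiplication. Each $\mathcal H g_i$ belongs to $H_e(\Omega_{a})$, and since $b<a$ it, together with all of its derivatives, is bounded on $\Omega_{b}$; hence multiplication by $\mathcal H g_i$ is a continuous endomorphism of $H_e(\Omega_{b})$. Thus the claim is equivalent to: the ideal $\hat J\subset H_e(\Omega_{b})$ generated by $\mathcal H g_0,\dots,\mathcal H g_r$ is dense in the closed subspace $V$ of those $\phi\in H_e(\Omega_{b})$ vanishing at every $\la\in A$ to order at least $n_\la$. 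The inclusion $\hat J\subset V$ is clear, and since $A$ is a finite subset of $\Omega_{b}\subset\Omega_{a}^o$, all the zeros we deal with lie in the open strip, away from $\partial\Omega_a$.

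To prove $\overline{\hat J}=V$ I would argue constructively. Fix $\phi\in V$. Near each $\la\in A$, pick a generator $\mathcal H g_{i_\la}$ whose vanishing order at $\la$ equals the minimal order $n_\la$; then $\phi/\mathcal H g_{i_\la}$ is holomorphic near $\la$, because $\phi$ vanishes there to order at least $n_\la$. Away from $A$ the generators have no common zero in $\Omega_{b}$, so locally some $\mathcal H g_i$ is invertible. Patching these local data by a partition of unity and a $\bar\partial$-correction yields $q_0,\dots,q_r$ holomorphic on all of $\Omega_{b}$ with $\phi=\sum_i\mathcal H g_i\, q_i$; however, the $q_i$ may grow at infinity in the strip at a rate governed by the decay of $\sum_i|\mathcal H g_i|$, in particular by that of $\mathcal H g_0$, so they need not belong to $H_e(\Omega_{b})$. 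To remedy this, multiply through by a sequence of even, strip-holomorphic multipliers $E_m$ with $E_m\to 1$ in $H_e(\Omega_{b})$ and $E_m q_i\in H_e(\Omega_{b})$ for each $i$; then $E_m\phi=\sum_i\mathcal H g_i\,(E_m q_i)\in\hat J$ and $E_m\phi\to\phi$, so $\phi\in\overline{\hat J}$.

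The only nontrivial point is the existence of the multipliers $E_m$: one needs $E_m$ to decay on the real line at least as fast as $1/\mathcal H g_0$ (up to polynomial factors) while still $E_m\to 1$, and this is exactly what the growth hypothesis on $g_0$ grants. Under the conformal map $\la\mapsto\exp(\pi\la/2a)$, which sends $\Omega_{a}$ onto a half-plane and the two ends $t\to\pm\infty$ of the real axis to two boundary points, the assumption $\limsup_{|t|\to\infty}e^{-\pi|t|/2a}\log|\mathcal H g_0(t)|=0$ translates into the statement that $1/\mathcal H g_0$ grows more slowly than $\exp(\eps\,e^{\pi|t|/2a})$ for every $\eps>0$, i.e.\ strictly below the quasi-analyticity threshold for the strip. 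A Levinson--Beurling type interpolation then produces the required $E_m$, decaying like $\exp(-\eps_m\,e^{\pi|t|/2a})$ with $\eps_m\downarrow0$ and hence tending to $1$. This interpolation step, and the logarithmic-integral estimate underlying it, is the main obstacle; by comparison, the transfer through $\mathcal H$, the continuity of the multiplication operators on $H_e(\Omega_{b})$, the local division off the finite set $A$ and its patching, and the bookkeeping for the multiplicities $n_\la$ are routine adaptations of classical Wiener--Tauberian arguments.

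Alternatively, one can proceed by duality: by Hahn--Banach it suffices to show that a continuous functional $S$ on $H_e(\Omega_{b})$ with $\mathcal H g_i\cdot S=0$ for all $i$ is a finite linear combination of jet evaluations $\phi\mapsto\big(\tfrac{d}{d\la}\big)^{k}\phi(\la)$ with $\la\in A$, $0\le k<n_\la$, each of which kills $V$. Via the Paley--Wiener description of the dual of $\sbr(\R^d)$, $S$ corresponds to an exponential distribution of type $b$ in the joint kernel of convolution by $g_0,\dots,g_r$; since each $g_i*\phi_{\la,k}$ is a combination of the $\phi_{\la,j}$ with coefficients the derivatives of $\mathcal H g_i$ at $\la$, this joint kernel contains the span of $\set{\phi_{\la,k}:\la\in A,\ 0\le k<n_\la}$, and what must be shown is that it equals this span --- a spectral-synthesis statement for the finitely generated ideal, forced once more by the growth condition on $g_0$ through the same conformal transplantation. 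This is the route carried out in the proof of Theorem~3.2 of \cite{weit}.
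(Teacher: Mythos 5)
The paper does not actually prove this proposition: it is stated with the remark that the proof is an adaptation of Theorem~3.2 of Ben Natan--Weit \cite{weit}, and no further detail is given. Your second (duality) route is precisely that strategy --- reduce by Hahn--Banach to showing that a functional annihilating the ideal is a finite combination of jet evaluations at the points of $A$, a spectral-synthesis statement whose entire content is that the hypothesis $\limsup_{|t|\to\infty}e^{-\pi|t|/2a}\log|\mathcal{H}g_0(t)|=0$ forbids "invisible" spectrum at the edge of the strip --- and you correctly attribute it to \cite{weit}. On that route you match the paper's intended argument, and, like the paper, you defer the one decisive analytic step (the log-integral/quasi-analyticity estimate after the conformal transplantation) to the reference, so the incompleteness there is shared with the source.

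Your first, constructive route is genuinely different, and that is where the real gaps sit. First, the global division $\phi=\sum_i\mathcal{H}g_i\,q_i$ with $q_i$ holomorphic on all of $\Omega_{b}$ \emph{and with quantified growth at infinity} is a corona-type problem in a strip: the partition-of-unity-plus-$\bar\partial$-correction must be run with weighted $L^2$ (H\"ormander) estimates, since without explicit control of the $q_i$ in terms of $\bigl(\sum_i|\mathcal{H}g_i|\bigr)^{-1}$ the subsequent damping step has nothing to act on; this is not routine and is not supplied. Second, the phrase ``$E_m\to1$ in $H_e(\Omega_{b})$'' cannot be literally meant: the constant $1$ does not belong to $H_e(\Omega_{b})$ (its elements decay faster than any polynomial), and for your $E_m$, which tend to $0$ at infinity, one has $\sup_{\la}|E_m(\la)-1|\geq1$ for every $m$. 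What you need --- and also write --- is $E_m\phi\to\phi$ for each fixed $\phi\in H_e(\Omega_{b})$, which requires uniform bounds $|E_m|\leq C$, convergence $E_m\to1$ on compacta, uniform control of all derivatives of $E_m$, \emph{and} decay on $\R$ dominating $1/\mathcal{H}g_0$ up to polynomial factors; producing such even, strip-holomorphic $E_m$ is exactly the Levinson--Beurling construction you invoke but do not carry out, and it is the same analytic kernel that the duality route hides in \cite{weit}. So: route two reproduces the paper's approach at the paper's own level of detail; route one is an interesting alternative but, as written, rests on two unproven steps.
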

		\subsubsection{Extension of spherical Fourier transform to corresponding dual spaces $S^a (\R^d)'$ and $H_e(\Omega_{a})'$}\label{subsection2.1.2}

		In what follows $H_e(\Omega_{a})'$ denotes the dual of $H_e(\Omega_{a})$ and for  any $S \in H_e(\Omega_{a})'$, for any suitable even holomorphic function $\psi$ on $\Omega_a$,  we define $ \psi S$ as a dual element by the equation $$\langle \psi S, \phi \rangle = \langle S,  \psi \phi \rangle, $$ for all $\phi \in H_e(\Omega_{a})$,  where $\langle . , . \rangle$  denotes the dual bracket. 
		
		Now we name $\sa(\R^d)'$, the dual space of $\sa(\R^d)$ as the space of  all exponential tempered distributions of type $a$ on $\R^d$. For any $\psi \in \sa(\R^d)$, we define the action of the operator $\D$ on $T \in  S^a(\R^d)'$ as follows:  $\langle \D T, \psi \rangle = \langle T,  \D \psi \rangle $.
		For an exponential type tempered distribution $T$, its spherical Fourier transform $\mathcal{H}T$ is defined as a linear functional on $H_{e}(\Omega_{a})$ by the following rule:
		$$ \langle \mathcal{H}T, \phi \rangle = \langle T,\mathcal{H}^{-1}\phi \rangle $$
		where $\phi \in H_{e}(\Omega_{a})$ and $ \mathcal{H}^{-1}\phi $ as given in (\ref{eq2}). 
		
		\subsection{Radialization operator}
		For a suitable function $f$ on $\R^d$, its radialization $Rf$ is defined as $$ Rf(x) = \int_{S^{d-1}} f(|x| \omega) d\sigma(\omega).$$ The following properties of the radialization operator are essentially needed.
		\begin{itemize}
				\item $$\int_{\R^d} Rf(x)g(x) dx  = \int_{\R^d} f(x) Rg(x) dx , $$where $f, g \in S^a(\R^d)$
			\item $ R(\D f)= \D (Rf)$

			\noindent 
		\end{itemize}
		The above definition of radialization can be extended to $T \in \sa(\R^d)' $ as follows.
		\begin{definition}
				For any $T \in \sa(R^d)'$, the radialization of $T$ is defined by the rule $\langle RT, f \rangle = \langle T, Rf \rangle$ for all $f \in \sa(\R^d)$. Thus $T \in \sa(\R^d)'$ is said to be radial if it satisfies $RT = T$.
				
			\end{definition}

		\subsection{Translation operator}
		 Given any $y \in \R^d$ and a function $f: \R^d \to \C,$ we define the translation of $f$ as $(\ell_yf)(x)= f(x-y)$.  For $T \in S^a(\R^d)'$ and $y \in \R^d$ , we define the translation $\ell_yT$ via $$\langle \ell_yT, f \rangle= \langle T, \ell_{-y} f \rangle, $$ for every $f \in S^a(\R^d)$.
		 
		 \subsection{Convolution}
		For $T \in \sa(\R^d)'$ and for $f \in \sa(\R^d)$, we define $$ T* f(x) = T(\ell_x f).$$ For $f_1, f_2 \in \sa(\R^d)$ and $T \in \sa(\R^d)'$, it can be verified that $$ \langle T, f_1*f_2 \rangle = \langle T * f_1, f_2 \rangle.$$

		\subsection{Multiplier operators}\label{ss:multipliers}
                 For $f\in \sa(\R^d)$, $\hat{f}$ denotes the Fourier transform of $f$ and defined by
$$\hat{f}(\xi)=\int_{\R^d}f(x) e^{-i x\cdot \xi}dx.$$		
		\begin{definition}
		 Let $m$ be a $C^{\infty}$ function defined on $\R^d$. 	For a fixed $a>0$, a continuous linear operator $\Theta : S^a(\R^d) \to S^a(\R^d)$ satisfying the relation $$ \widehat{(\Theta f)}(\xi) = m(\xi) \hat{f}(\xi)$$ for all $f \in S^a(\R^d),  ~\xi \in \R^d$ is called a (Fourier) multiplier on $S^a(\R^d)$ with symbol $m(\xi)$.		\end{definition}
			Following are the multiplier operators which we will be using to characterize eigenfunctions of Laplacian in this work.
			 \begin{enumerate}[(1)]
			 	
			\item	{\bf Spherical average.} For $f \in L^1_{loc}(\R^d)$ and $t>0$, the spherical average of $f$ is denoted by $M_tf(x)$ and is defined by $$M_tf(x) = \ds \int_{S^{d-1}} f(x-t\omega) d\si(\omega).$$

			\item {\bf Volume average.}  Let $B(x,t)$ be the ball of radius $t>0$ centered at $x \in \R^d$ and $|B(x,t)|$ be its volume. The volume average of a function $f \in  L^1_{loc}(\R^d)$, denoted by $B_t f(x)$ and defined by $$ B_t f(x) = \frac{1}{|B(0,t)|} (f* \chi_{B(0,t)})(x).$$
\item {\bf Heat operator.} The heat operator $e^{-t\D}$ on $\R^d$ is given by $$ e^{-t\D}f(x) = (f*h_t)(x)\text{ where}~  h_t(x) = \frac{1}{(4\pi t)^\frac{d}{2}} e^{\frac{-|x|^2}{4t}},\text{for}~~ t>0.$$ In fact $h_t$ is called the heat kernel on $R^d$ and it is the fundamental solution of the heat equation $ -\D u(x,t)= \frac{\partial}{\partial t} u(x,t). $ 
	\end{enumerate}
	The following proposition establishes the fact that the above-mentioned operators are indeed Fourier multiplier operators on the space $S^a(\R^d)$.
	\begin{proposition}
	Let $f \in S^a(\R^d)$. Then for any $t>0,$
	 \begin{enumerate}
		\item $M_tf ~\text{and}~B_tf$ define  multiplier operators on  $S^a(\R^d)$ with symbols $\phi_t(\xi)$ and $\psi_{t}(\xi)$ respectively.
		
		\item $e^{-t\D}$ defines a multiplier operator on  $S^a(\R^d)$ with symbol $e^{-t |\xi|^2}$.
	\end{enumerate}

	\end{proposition}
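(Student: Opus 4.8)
The plan is to realize each of $M_t$, $B_t$ and $e^{-t\Delta}$ as convolution with a fixed finite Borel measure on $\R^d$, and then to prove once and for all that such convolutions preserve $S^a(\R^d)$. Let $\sigma_t$ denote the push-forward of the normalized surface measure $d\sigma$ on $S^{d-1}$ under $\omega\mapsto t\omega$, let $\beta_t=|B(0,t)|^{-1}\chi_{B(0,t)}\,dx$, and recall $h_t(x)=(4\pi t)^{-d/2}e^{-|x|^2/4t}$. Directly from the definitions, $M_tf=f*\sigma_t$, $B_tf=f*\beta_t$ and $e^{-t\Delta}f=f*h_t$. The measures $\sigma_t$ and $\beta_t$ are compactly supported, while $h_t\,dx$ satisfies $\int_{\R^d}(1+|y|)^m e^{a|y|}h_t(y)\,dy<\infty$ for every $m$ since a Gaussian decays faster than any exponential; thus each of these measures $\nu$ satisfies $c_m(\nu):=\int_{\R^d}(1+|y|)^m e^{a|y|}\,d|\nu|(y)<\infty$ for all $m\in\mathbb{N}_0$. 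The proposition will follow from the single lemma that convolution with any such $\nu$ is a continuous operator on $S^a(\R^d)$ with symbol $\hat\nu$, together with three symbol computations.

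First I would prove that, for any measure $\nu$ with $c_m(\nu)<\infty$ for all $m$, the map $f\mapsto f*\nu$ sends $S^a(\R^d)$ continuously into itself. Since elements of $S^a(\R^d)$ are smooth and exponentially decaying, differentiation under the integral sign is justified and $D^\alpha(f*\nu)=(D^\alpha f)*\nu$ for every multi-index $\alpha$; in particular $f*\nu\in C^\infty(\R^d)$. To control the seminorms, I would combine the elementary inequalities $1+|x|\le(1+|x-y|)(1+|y|)$ and $e^{a|x|}\le e^{a|x-y|}e^{a|y|}$ with the definition of $\gamma_{m,\alpha}$ to obtain, for all $x\in\R^d$,
\[
e^{a|x|}(1+|x|)^m\,|D^\alpha(f*\nu)(x)|\le\gamma_{m,\alpha}(f)\int_{\R^d}(1+|y|)^m e^{a|y|}\,d|\nu|(y)=c_m(\nu)\,\gamma_{m,\alpha}(f),
\]
so that $\gamma_{m,\alpha}(f*\nu)\le c_m(\nu)\,\gamma_{m,\alpha}(f)$. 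This simultaneously shows $f*\nu\in S^a(\R^d)$ and the continuity of the operator.

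Next I would verify the multiplier identity and identify the three symbols. Fubini's theorem (again using the exponential decay of $f$ and the finiteness of $\nu$) gives $\widehat{f*\nu}(\xi)=\hat f(\xi)\,\hat\nu(\xi)$ with $\hat\nu(\xi)=\int_{\R^d}e^{-iy\cdot\xi}\,d\nu(y)$, so $f\mapsto f*\nu$ is the Fourier multiplier on $S^a(\R^d)$ with symbol $\hat\nu$. It remains to compute: by the symmetry $\omega\mapsto-\omega$ of $d\sigma$, $\hat\sigma_t(\xi)=\int_{S^{d-1}}e^{-it\,\xi\cdot\omega}\,d\sigma(\omega)=\int_{S^{d-1}}e^{it\,\xi\cdot\omega}\,d\sigma(\omega)=\phi_t(\xi)$; the scaling $y=tz$ gives $\hat\beta_t(\xi)=|B(0,t)|^{-1}\int_{B(0,t)}e^{-iy\cdot\xi}\,dy=|B(0,1)|^{-1}\int_{B(0,1)}e^{-it\,\xi\cdot z}\,dz=\psi_t(\xi)$; and the standard Gaussian integral gives $\hat h_t(\xi)=e^{-t|\xi|^2}$, consistent with $\widehat{\Delta f}(\xi)=|\xi|^2\hat f(\xi)$ for the positive Laplacian.

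\textbf{Main obstacle.} The only step with genuine content is the second one: showing that these operators preserve not merely boundedness or Schwartz decay but the entire family of exponentially weighted seminorms $\gamma_{m,\alpha}$ defining $S^a(\R^d)$. The Peetre-type splitting of the weight $e^{a|x|}(1+|x|)^m$ handles this uniformly, and the only place where one uses more than compact support of the convolving measure is the heat kernel, where finiteness of its exponentially weighted moments is precisely what forces $e^{-t\Delta}$ to land inside $S^a(\R^d)$. Everything else — differentiation under the integral, Fubini for the symbol identity, and the three symbol computations — is routine.
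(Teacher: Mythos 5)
Your proof is correct, and its engine is the same as the paper's: the Peetre-type splitting $e^{a|x|}(1+|x|)^m \le e^{a|x-y|}(1+|x-y|)^m \cdot e^{a|y|}(1+|y|)^m$ applied inside the convolution, followed by the identity $\widehat{f*\nu}=\hat f\,\hat\nu$ and the three symbol computations. The difference is one of packaging: the paper carries out the seminorm estimate explicitly only for $M_t$ (obtaining $\gamma_{m,\alpha}(M_tf)\le (1+|t|)^m e^{a|t|}\gamma_{m,\alpha}(f)$ from the compact support of $\sigma_t$) and dismisses $B_t$ and $e^{-t\Delta}$ with "similar arguments," whereas you prove a single lemma for convolution with any finite measure $\nu$ satisfying $c_m(\nu)=\int(1+|y|)^m e^{a|y|}\,d|\nu|(y)<\infty$. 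Your version is the more satisfactory one for the heat operator, since $h_t$ is not compactly supported and the paper's bound $(1+|t|)^m e^{a|t|}$ does not literally transfer; the finiteness of the exponentially weighted Gaussian moments is exactly the point that needs to be said, and you say it. Both routes yield continuity plus the multiplier identity, and your symbol computations for $\phi_t$, $\psi_t$ and $e^{-t|\xi|^2}$ match the paper's.
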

		
	\begin{proof} Initially we shall establish the continuity of the operator $M_t$ on $S^a(\R^d)$.
		It is easy to see that $D^\ap M_tf(x) =  \ds \int_{S^{d-1}} D^\ap f(x-t\omega) d\si(\omega) = M_t(D^\ap f)(x)$. Then
		\begin{align*}
			\gamma_{m, \ap}(M_tf) &= \sup_{x \in \R^d}  \ds \int_{S^{d-1}} (1+|x|)^m e^{a|x|}D^\ap f(x-t\omega)  d\si(\omega) \\ & = \sup_{x \in \R^d}  \ds \int_{S^{d-1}} \frac{(1+|x|)^m}{(1+|x-t\omega|)^m} \frac{e^{a|x|}}{e^{a|x-t\omega|}} (1+|x-t\omega|)^m e^{a|x-t\omega|}D^\ap f(x-t\omega)  d\si(\omega)\\ &\leq  \sup_{x \in \R^d}  \ds \int_{S^{d-1}} \frac{(1+|t|)^m e^{a|x|}}{e^{a|x|} e^{-a|t|}} \gamma_{m, \ap}(f) d\si(\omega) \\ & = (1+|t|)^m e^{a|t|} \gamma_{m, \ap}(f).
		\end{align*} 
		 Thus $f \mapsto M_tf$ is continuous on $S^a(\R^d)$. Note that $M_tf(x) = (f * \si_t)(x)$, where 
		 $\si_t$ denotes the surface measure on the sphere of radius $t$ with center origin.	As $\widehat{M_tf}(\xi) = \hat{f}(\xi) \phi_t(\xi)$, we see that $M_t$ being a multiplier operator with symbol $\phi_{t}$.

Now continuity of the operator $B_t$ and $e^{-t\D}$ can be proved using the similar arguments used above. Then $\widehat{B_tf}(\xi) = \hat{f}(\xi) \psi_t(\xi)$ implies that $B_t$ being a multiplier operator with symbol $\psi_{t}.$, whereas $\widehat{e^{-t\D}f}(\xi) = \hat{f}(\xi) e^{-t|\xi|^2}$ implies that $e^{-t\D}$ is a multiplier with symbol $ e^{-t|\xi|^2}$.
	\end{proof}

\begin{remark}
The multipliers are naturally extended to the dual space in the following way: If $\Theta$ is a multiplier with symbol $m(\xi)$, then for any $T\in S^a(\R^d)'$, $\Theta T$ is defined by $\langle \Theta T, f\rangle=\langle T, \Theta^* f\rangle$ for all $f\in S^a(\R^d)$, where $\Theta^*$ is a multiplier  with the symbol $m(-\xi).$ 
\end{remark}	

\begin{remark}
	It can be verified that for all $ \phi \in H_e(\Omega_{a})$ and $T \in \sa(\R^d)'$, we have $ \langle \mathcal{H} M_t T, \phi \rangle = \langle \phi_{\la}(te_1) \mathcal{H}T, \phi \rangle$, i.e., $\mathcal{H}M_t T=\phi_\la(te_1)\mathcal{H}T$.
\end{remark}
	
	\subsection{One Radius Theorem} 
	
	In this subsection we shall establish a result of significant importance as it aids in characterizing the eigendistributions of Laplacian with exponential growth through mean value property by considering a fixed radius alone. Though we are proving this result for the spherical average $M_t$, an analogous version can be proved for ball average $B_t$ also by replacing appropriate symbols.
	\begin{theorem}\label{theorem2.15}
		
	Let $t>0$ be fixed and $a \geq 0$. Suppose $T \in S^a(\R^d)'$ be such that $M_tT= \phi_{ia}(te_1)T$. Then $T$ is an eigendistribution of $\D$ with eigenvalue $-a^2$.
\end{theorem}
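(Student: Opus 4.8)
The plan is to move the problem to the spherical Fourier transform side, exploit the translation invariance of the hypothesis, and then divide out the symbol of $M_t$. First, since $M_t$ commutes with translations, for every $y\in\R^d$ the translate $\ell_y T\in\sa(\R^d)'$ also satisfies $M_t(\ell_y T)=\phi_{ia}(te_1)\,\ell_y T$. Applying $\mathcal{H}$ and using the identity $\mathcal{H}M_t T=\phi_{\la}(te_1)\mathcal{H}T$, this becomes
\[
h(\la)\,\mathcal{H}(\ell_y T)=0 \quad\text{in } H_e(\Omega_{a})',\qquad h(\la):=\phi_{\la}(te_1)-\phi_{ia}(te_1).
\]
Here $h$ is even and entire, and by the derivative estimate for $\phi_{\la}$ it is bounded on $\Omega_{a}$ together with all its $\la$-derivatives, so multiplication by $h$ acts continuously on $H_e(\Omega_{a})$ and the displayed identity is meaningful.

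The next step is to locate the zeros of $h$ on $\Omega_{a}$. By Proposition~\ref{Prop2.2} and the evenness $\phi_{-\la}=\phi_{\la}$, for $a>0$ the only zeros of $h$ in $\Omega_{a}$ are $\la=\pm ia$; for $a=0$ the same argument (using $|\phi_{\la}(te_1)|\le 1$ for real $\la$, with equality only at $\la=0$) leaves $\la=0$ as the only zero. By Proposition~\ref{prop 2.3}, $h(\la)\to-\phi_{ia}(te_1)\ne0$ as $|\la|\to\infty$ in $\Omega_{a}$, so $h$ is bounded away from $0$ outside a compact set. By Proposition~\ref{prop 2.4} the zeros at $\pm ia$ are simple when $a>0$, while for $a=0$ the zero at $0$ has order exactly two (since $\phi_{0,1}(te_1)=0$ by antipodal symmetry and $\phi_{0,2}(te_1)\ne0$). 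Therefore, if $\psi\in H_e(\Omega_{a})$ satisfies $\psi(ia)=0$, then — using that an even function vanishing at $ia$ vanishes at $\pm ia$ to an order matching that of $h$ — the quotient $\psi/h$ again lies in $H_e(\Omega_{a})$: it is holomorphic in $\Omega_{a}^{o}$ since $h$ has no zero there, near $\pm ia$ the matched orders of vanishing keep $\psi/h$ and all its derivatives bounded, and at infinity it inherits the rapid decay of $\psi$. Writing $\langle\mathcal{H}(\ell_y T),\psi\rangle=\langle h\,\mathcal{H}(\ell_y T),\psi/h\rangle=0$ then shows $\mathcal{H}(\ell_y T)$ annihilates the closed hyperplane $\{\psi:\psi(ia)=0\}$, hence equals $c(y)\,\mathrm{ev}_{ia}$ for a scalar $c(y)$. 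Translating back through \eqref{eq2}, the isomorphism $\mathcal{H}:\sar(\R^d)\to H_e(\Omega_{a})$, and $\mathcal{H}g(ia)=\int_{\R^d}g\,\phi_{ia}$, we obtain $R(\ell_y T)=c(y)\,\phi_{ia}$ in $\sa(\R^d)'$ for every $y$ (note $\phi_{ia}$, being locally integrable of exponential type $a$, does define an element of $\sa(\R^d)'$).

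To conclude, set $S:=(\D+a^2 I)T$. Since $\D$ commutes with $M_t$ and with translations, $S$ satisfies the same hypothesis; and for radial $g\in\sar(\R^d)$,
\[
\langle\ell_y S,g\rangle=\langle T,\ell_{-y}(\D+a^2 I)g\rangle=c(y)\int_{\R^d}(\D+a^2 I)g\cdot\phi_{ia}=c(y)\int_{\R^d}g\,(\D+a^2 I)\phi_{ia}=0,
\]
using the previous step for the radial function $(\D+a^2 I)g$, an integration by parts (legitimate since $g$ decays exponentially and $\phi_{ia}$ grows only exponentially), and $\D\phi_{ia}=-a^2\phi_{ia}$. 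Hence $R(\ell_y S)=0$ for all $y$. Finally, with a radial approximate identity $\rho_{\eps}\in\sar(\R^d)$ one writes $g*\rho_{\eps}=\int_{\R^d}g(y)\,\ell_y\rho_{\eps}\,dy$ for $g\in\sa(\R^d)$ (an $\sa$-valued integral), and since $\langle S,\ell_y\rho_{\eps}\rangle=\langle R(\ell_{-y}S),\rho_{\eps}\rangle=0$ this gives $\langle S,g*\rho_{\eps}\rangle=0$; letting $\eps\to0$, so that $g*\rho_{\eps}\to g$ in $\sa(\R^d)$, yields $\langle S,g\rangle=0$. Thus $S=0$, i.e. $\D T=-a^2 T$.

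The hard part is the division step: verifying that $\psi/h$ genuinely belongs to the test-function space $H_e(\Omega_{a})$, with all seminorms finite, including control up to the \emph{boundary} points $\pm ia\in\partial\Omega_{a}$ and rapid decay at infinity. This is exactly where the three facts about $\la\mapsto\phi_{\la}(te_1)$ enter essentially: that $h$ has no zero on $\Omega_{a}$ other than $\pm ia$ (Proposition~\ref{Prop2.2}), that $h$ has no zero at infinity (Proposition~\ref{prop 2.3}), and that the zeros are of the precise order recorded above (Proposition~\ref{prop 2.4}). Everything else is soft: translation invariance of the equation, the hyperplane argument, and the approximate-identity limit. The same scheme applies verbatim to the ball average $B_t$ upon replacing $\phi_{\la}$ by $\psi_{\la}$ and invoking the analogues of these three facts (Propositions~\ref{prop 2.7} and~\ref{prop 2.8}).
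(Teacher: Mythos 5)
Your argument is correct, but it reaches the conclusion by a genuinely different route from the paper in the core step. The paper first treats radial $T$ with $a>0$ by showing that the zeros of $\Psi_1(\la)=\phi_\la(te_1)-\phi_{ia}(te_1)$ remain confined to $\pm ia$ in a slightly \emph{wider} strip $\Omega_{a+\delta}$, and then invokes the Tauberian-type density result (Proposition~\ref{prop2.9}, in the spirit of Ben Natan--Weit) for the ideal generated by $g$ with $\mathcal{H}g=e^{-\la^2}\Psi_1$, followed by Rudin's lemma to identify $T$ as a multiple of $\phi_{ia}$; the case $a=0$ is handled separately via the support theorem and a coefficient-comparison argument, and the non-radial case by radializing translates and a heat-kernel approximate identity. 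You instead divide directly in $H_e(\Omega_a)$: the hyperplane argument identifying $\mathcal{H}(\ell_yT)$ with a multiple of $\mathrm{ev}_{ia}$ replaces the density/ideal machinery, and the order-matching at the zero lets you treat $a>0$ (simple zeros at $\pm ia$) and $a=0$ (double zero at $0$) uniformly; your endgame ($S=(\D+a^2I)T$, $R(\ell_yS)=0$, approximate identity) coincides with the paper's treatment of the non-radial case. What your route buys is the avoidance of Proposition~\ref{prop2.9} and of the $a=0$ case-split; what it costs is that the entire weight falls on the division step at the boundary points $\pm ia\in\partial\Omega_a$, which you correctly flag as the crux. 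That step does go through, but the write-up is thinnest exactly there: since $\psi$ is only holomorphic in $\Omega_a^{o}$, you should make explicit that the factorization $\psi(\la)=(\la-ia)\Psi(\la)$ with $\Psi(\la)=\int_0^1\psi'(ia+s(\la-ia))\,ds$ is what controls all derivatives of the quotient up to the boundary (using that every derivative of $\psi$ is bounded on the closed strip and that $h$, being entire with a zero of known exact order, factors holomorphically in a full complex neighborhood of $\pm ia$). With that spelled out, the proof is complete.
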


\begin{proof}
	It can be seen that for $T \in S^a(\R^d)'$,  we have \begin{equation}
	M_t T = T * \si_t
	\end{equation} where $\si_t$ denotes the surface measure on the sphere of radius $t$ with center origin.	Let $\delta_0$ be the dirac delta distribution. Together with the  hypothesis $M_tT = \phi_{ia}(te_1)T$ implies that $T * \nu = 0,$ where $ \nu = \si_t - \phi_{ia}(te_1) \delta_0$.
	
	Initially we shall consider the case when $T$ is radial and $a > 0$. Let $$\Psi_1(\la) = \mathcal{H}{\nu(\la)}= \mathcal{H}{\si_t} - \phi_{ia}(te_1) \mathcal{H}{\delta_0}= \phi_{\la}(te_1) - \phi_{ia}(te_1)$$ where $\la \in \C. $
	Now we assert that there exists a $ \delta>0$ for which $\Psi_1(\la) $ remains non-zero throughout the strip $\Omega_{a+\delta}$ except precisely at the points $\la= \pm ia.$ It follows from Proposition \ref{Prop2.2} that if $\la \in {\Omega_{a}}$ and $\la \neq \pm ia$, then $\phi_{\la}(te_1) \neq \phi_{ia}(te_1)$. Let $\delta_1 >0$ be fixed. Since $|\phi_{\la}(te_1)| \to 0,$ when $|\la| \to \infty$, we can find an $N>0$ such that $|\phi_{\la}(te_1)| < \frac{\phi_{ia}(te_1)}{2}$, whenever $ \la \in {\Omega_{a+\delta_1}}$ and $|\re(\la)| > N$. The compactness of the set $\{ \la| \la \in {\Omega_{a+\delta_1}}, |\re(\la)| \leq N \}$  together with the analyticity of the map $\la \mapsto \phi_{\la}(te_1)$ implies that the set $Z = \{ \la \in  {\Omega_{a+\delta_1}} | \la \neq \pm ia, |\re(\la)| \leq N, \phi_\la(te_1) = \phi_{ia}(te_1)\}  $ must be necessarily finite. If $Z$ is empty, we set $\delta = \delta _1$, or otherwise we may choose any $\delta < \rho'-a$, where $\rho' = \inf_{\la \in Z} |\im(\la)|$. This ensures that the only zeroes of $\Psi_1$ in ${\Omega_{a+\delta}}$ are $\pm ia$. Moreover Proposition \ref{prop 2.3} asserts that $\phi_{ia,1}(te_1) \neq 0$, implying $\Psi_1$ has simple roots at $\pm ia$.

	Let $ \Psi_2(\la)= e^{-\la^2}\Psi_1(\la), \la \in {\Omega_{a}}$. Then it can be verified that $\Psi_2 \in H_e(\Omega_{a}).$ Let $g \in \sar(\R^d)$ such that $\mathcal{H}{g} = \Psi_2$.  By applying Proposition \ref{prop2.9}, we obtain that $\{g*h | h \in \sar(\R^d)\}$ is dense in the space of all functions in $\sar(\R^d)$ whose spherical Fourier transform vanishes at $\pm ia$.  Since $\langle T, g*h \rangle =0$ for any $h\in \sar(\R^d)$, $ \langle T, \phi \rangle =0$ for every $\phi \in \sar(\R^d)$ with $ \mathcal{H}{\phi}(ia) = \mathcal{H}{\phi}(-ia)=0.$ However, $\phi_{ia}(x)$ is itself a radial tempered distribution of exponential type that annihilates every $\phi \in \sar(\R^d)$ satisfying $\mathcal{H}{\phi(ia)}=0$.
    Consequently by Lemma 3.9 in \cite{rudin}, we have $T=M \phi_{ia}$ for some constant $M$. In particular, $T$ is an eigendistribution of $\D$ with eigenvalue $-a^2$.  
	
	Let us now consider the case when $a=0$. By Theorem \ref{theorem4.7},  we obtain  Supp $ \mathcal{H}T \subseteq \{0\}$. Lemma \ref{lemma4.8} yields that $T$ must be of the form $\sum_{k=0}^{N} a_k \phi_{0,k}$ for some constants $a_0, a_1, \dots a_N $ such that $ a_N \neq 0$. On using the fact that $\phi_{\la}(x)$ being an even function of $\la$, $\phi_{0,k}= 0$ for $ k$ odd, we obtain $T= \sum_{k=0}^{N} a_k \phi_{0,2k}$. Proof for this part will be done once we establish that $N=0$. Note that \begin{align*}
		M_t \phi_{0,2k}(x) & = \frac{\pa^{2k}}{\pa \la^{2k}}\Big|_{\la = 0}(\phi_{\la}*\si_t)(x) =  \frac{\pa^{2k}}{\pa \la^{2k}} \Big|_{\la = 0} (\phi_{\la}(te_1)\phi_{\la}(x)) \\ & = \sum_{i=0}^{2k} \binom{2k}{i} \phi_{0,i}(te_1) \phi_{0,2k-i}(x) 
		= \sum_{i=0}^{k} \binom{2k}{2i} \phi_{0,2i}(te_1) \phi_{0,2(k-i)}(x).
			\end{align*} 
		Assuming $N \geq 1$ and using the hypothesis $M_t T = \phi_0(te_1) T$, comparing the coefficient of $\phi_{0,2(N-1)}$ on both sides we get 
	\[
	N(2N-1) a_N \phi_{0,2}(te_1) = 0.
	\]
	But since $\phi_{0,2}(te_1) \neq 0$ (see Proposition \ref{prop 2.4}), we get $a_N = 0$, which is a contradiction. Hence $T = a_0 \phi_0$ and $\Delta T = 0$. This completes the proof of the theorem for radial distributions.

	Next we shall prove the theorem when $T_k$'s are non-radial. Initially we shall see that given any non-radial exponential type tempered distributions $T$ satisfying the hypothesis of the theorem, the corresponding radial distribution ${R\ell_xT}$  for any $x \in \R^d$, also satisfies the hypothesis, which is evident from the following:
	$$M_t(R \ell_xT ) = R(\ell_x T)* \si_t = R(\ell_x(T*\si_t)) = R(\ell_x(\phi_{ia}(te_1))T) = \phi_{ia}(te_1) R(\ell_xT) .$$

	From the result proved for radial distributions, we conclude that $\D R(\ell_x T) = -a^2 R(\ell_x T)$, for every $x \in \R^d$ which implies $R\ell_x(\D T) = R\ell_x(-a^2 T)$, for every $x \in \R^d$. It is enough to prove that for any $T \in S^a(\R^d)'$, if $R(\ell_xT)=0$ for all $x \in \R^d$, then $T$ equals zero as a distribution. Indeed if $R(\ell_xT)=0$ for all $x \in \R^d$, then $ \langle \ell_x T, h_t \rangle = 0$ for all $t > 0$ where $ h_t$ denotes the heat kernel, which is a radial function given by $\ds h_t(x) = \frac{1}{(4\pi t)^\frac{d}{2}} e^{\frac{-|x|^2}{4t}}$, for $t>0$. That is, $T * h_t \equiv0. $ But $T * h_t \to T$ as $t \to 0$ in the sense of distributions. Therefore $\D T+ a^2T=0$ and hence $\D T= -a^2 T$. 
\end{proof}
		
		Below given propositions are equivalent versions of One Radius Theorem for the ball average $B_t$ and heat operator $e^{-t \D}$, whose proof we omit as it follows the same line of arguments as above.
			\begin{proposition}
			
			Let $t>0$ be fixed and $a\geq0$. Suppose $T \in S^a(\R^d)'$ be such that $B_tT= \psi_{ia}(te_1)T$. Then $T$ is an eigendistribution of $\D$ with eigenvalue $-a^2$.
		\end{proposition}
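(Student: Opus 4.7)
The plan is to mirror the proof of Theorem~\ref{theorem2.15} step by step, systematically replacing the spherical average symbol $\phi_\la(te_1)$ by the ball average symbol $\psi_\la(te_1)$ and invoking Propositions~\ref{prop 2.7} and~\ref{prop 2.8} in place of Propositions~\ref{prop 2.3} and~\ref{prop 2.4}. First I would rewrite the hypothesis as a convolution equation: since $B_tT = T * \tau_t$ where $\tau_t = \frac{1}{|B(0,t)|}\chi_{B(0,t)}$, the assumption $B_tT = \psi_{ia}(te_1)T$ becomes $T * \nu = 0$ with $\nu = \tau_t - \psi_{ia}(te_1)\delta_0$. Taking the spherical Fourier transform produces $\Psi_1(\la) = \psi_\la(te_1) - \psi_{ia}(te_1)$, and the objective is to show this function has only $\pm ia$ as simple zeros inside a slightly enlarged strip $\Omega_{a+\dl}$.

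For the radial case with $a>0$, I would combine the analogue of Proposition~\ref{Prop2.2} (already stated for $\psi_\la$) with Proposition~\ref{prop 2.7} to argue that $|\psi_\la(te_1)| < \psi_{ia}(te_1)/2$ outside a strip $|\re(\la)|\leq N$ inside $\Omega_{a+\dl_1}$, and then use compactness plus analyticity to exhibit only finitely many zeros in the compact strip. Choosing $\dl$ so that no zero other than $\pm ia$ survives in $\Omega_{a+\dl}$ gives the required zero set; simplicity at $\pm ia$ follows from Proposition~\ref{prop 2.8}, which asserts $\psi_{\la,1}(te_1)\neq 0$ for nonzero $\la\in i\R$. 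Setting $\Psi_2(\la) = e^{-\la^2}\Psi_1(\la)\in H_e(\Omega_a)$ and choosing $g\in \sar(\R^d)$ with $\mathcal{H}g = \Psi_2$, Proposition~\ref{prop2.9} implies that $\{g * h : h\in \sar(\R^d)\}$ is dense in the subspace of $\sar(\R^d)$ whose spherical Fourier transform vanishes at $\pm ia$. Since $\langle T, g*h\rangle = 0$ for all $h$, and $\phi_{ia}$ itself spans the annihilator of this subspace among radial distributions of exponential type, one concludes $T = M \phi_{ia}$ for a constant $M$, whence $\D T = -a^2 T$.

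For $a=0$, I would repeat the reduction leading to $\mathrm{Supp}\,\mathcal{H}T\subseteq\{0\}$, so that $T = \sum_{k=0}^{N} a_k \phi_{0,2k}$ with $a_N\neq 0$. The key computation replaces the spherical-mean identity by
\[
B_t \phi_{0,2k}(x) = \sum_{i=0}^{k}\binom{2k}{2i}\psi_{0,2i}(te_1)\,\phi_{0,2(k-i)}(x),
\]
and comparison of the coefficient of $\phi_{0,2(N-1)}$ in $B_tT = \psi_0(te_1)T$ yields $N(2N-1)a_N\psi_{0,2}(te_1) = 0$. Since Proposition~\ref{prop 2.8} ensures $\psi_{0,2}(te_1)\neq 0$, this forces $N=0$ and therefore $\D T = 0$. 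Finally, for non-radial $T$, I would observe that $B_t$ commutes with translations and with the radialization operator $R$, so for every $x\in\R^d$ the distribution $R\ell_x T$ satisfies the same hypothesis and is radial; the preceding argument gives $\D R\ell_x T = -a^2 R\ell_x T$, equivalently $R\ell_x(\D T + a^2 T) = 0$ for all $x$. Convolving with the heat kernel and letting $t\to 0^+$ as in Theorem~\ref{theorem2.15} then shows $\D T = -a^2 T$.

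The main obstacle is the zero-set analysis for $\Psi_1$: everything else is bookkeeping that transfers verbatim, but I need to verify carefully that the decay and maximum-modulus behavior of $\psi_\la(te_1)$ matches that of $\phi_\la(te_1)$ sharply enough to isolate $\pm ia$ as the only zeros in a slightly larger strip. Fortunately this is exactly what the paper has already packaged into Propositions~\ref{prop 2.7} and~\ref{prop 2.8}, so the verification reduces to quoting them at the right moment.
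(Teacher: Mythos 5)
Your proposal is correct and reproduces exactly the strategy the paper intends for this omitted proof: a line-by-line adaptation of Theorem~\ref{theorem2.15} in which $\phi_\la(te_1)$ is replaced by $\psi_\la(te_1)$, the surface measure $\si_t$ by the normalized indicator $\frac{1}{|B(0,t)|}\chi_{B(0,t)}$, and Propositions~\ref{prop 2.3},~\ref{prop 2.4} by their $\psi_\la$-analogues~\ref{prop 2.7},~\ref{prop 2.8}, together with the unnumbered maximum-modulus proposition for $\psi_\la$. The zero-set isolation, the application of Proposition~\ref{prop2.9}, the $a=0$ coefficient comparison using $\psi_{0,2}(te_1)\neq 0$, and the non-radial reduction via $R\ell_x T$ and heat-kernel approximation all carry over as you state.
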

		\begin{proposition}
			Let $t>0$ be fixed and $a >0$. Let $T \in S^a(\R^d)'$ be such that $ e^{-t \D}T = e^{ta^2}T$. Then $T$ is an eigendistribution of $\D$ with eigen value $-a^2$.
		\end{proposition}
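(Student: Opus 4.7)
The plan is to mirror the proof of Theorem \ref{theorem2.15}, replacing the role of the symbol $\phi_\la(te_1)$ by the spherical Fourier symbol $\mathcal{H}h_t(\la) = e^{-t\la^2}$ of the heat operator on $\sa(\R^d)$, and the annihilating measure $\si_t - \phi_{ia}(te_1)\delta_0$ by $\nu := h_t - e^{ta^2}\delta_0$. Since $a>0$ is explicitly assumed, the separate $a=0$ case treated in Theorem \ref{theorem2.15} does not arise here.

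Assume first that $T$ is radial. The hypothesis is equivalent to $T * \nu = 0$, which on the spherical Fourier side reads $\Psi_1(\la)\,\mathcal{H}T = 0$, where
\[\Psi_1(\la) := \mathcal{H}\nu(\la) = e^{-t\la^2} - e^{ta^2}.\]
The first task is to show that the only zeros of $\Psi_1$ in some slightly enlarged strip $\Omega_{a+\delta}$ are simple zeros at $\pm ia$. Writing $\la = x+iy$, one has $|e^{-t\la^2}| = e^{t(y^2-x^2)}$, so $\Psi_1(\la)=0$ forces both $y^2 - x^2 = a^2$ and $2txy \in 2\pi\Z$. On $\Omega_a$ the first relation combined with $|y|\leq a$ immediately gives $x=0$ and $y=\pm a$; enlarging to $\Omega_{a+\delta}$ restricts $x$ to $|x|\leq \sqrt{2a\delta+\delta^2}$, and for $\delta$ sufficiently small the phase condition $2txy \in 2\pi\Z$ admits no further solutions. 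Simplicity at $\pm ia$ follows from $\Psi_1'(\pm ia) = \mp 2tia\,e^{ta^2} \neq 0$.

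Next, set $\Psi_2(\la) := e^{-\la^2}\Psi_1(\la)$. The Gaussian factor provides the $e^{-\re(\la)^2}$-decay on $\Omega_a$ that places $\Psi_2$ in $H_e(\Omega_a)$ and supplies the limsup condition required by Proposition \ref{prop2.9}. Choose $g \in \sar(\R^d)$ with $\mathcal{H}g = \Psi_2$. Proposition \ref{prop2.9} then gives density of $\{g * h : h \in \sar(\R^d)\}$ inside the subspace of $\sar(\R^d)$ of functions whose spherical Fourier transform vanishes at $\pm ia$. Since $T$ annihilates every $g * h$, it annihilates this entire subspace, and Lemma 3.9 of \cite{rudin} identifies $T$ as a linear combination of $\phi_{ia}$ and $\phi_{-ia}$. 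Both are eigendistributions of $\D$ with eigenvalue $-a^2$, so $\D T = -a^2 T$.

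For non-radial $T$, we argue as in the final step of Theorem \ref{theorem2.15}: $R$ and $\ell_x$ commute with $e^{-t\D}$ (since $e^{-t\D}$ is convolution with the radial kernel $h_t$), so $R\ell_xT$ satisfies the same hypothesis and hence $\D(R\ell_xT) = -a^2 R\ell_xT$ for every $x\in \R^d$. Equivalently, $R\ell_x(\D T + a^2T) = 0$ for all $x$, and the heat-kernel approximation argument recalled in Theorem \ref{theorem2.15} then forces $\D T + a^2T = 0$. The main obstacle in the whole argument is the zero-set analysis in the second paragraph: unlike $\phi_\la(te_1)$, whose rigidity was extracted from a non-constancy argument on the sphere, $e^{-t\la^2}$ could a priori produce infinitely many zeros in a strip through the periodicity of $e^{2itxy}$, and ruling them out on $\Omega_{a+\delta}$ requires the combined modulus-and-phase analysis above.
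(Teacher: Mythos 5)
Your proposal is correct and follows exactly the route the paper indicates (the paper states that the proof "follows the same line of arguments" as Theorem \ref{theorem2.15} and omits it). You fill in the one genuinely new step — the explicit modulus-and-phase analysis showing that $e^{-t\lambda^2}-e^{ta^2}$ has only simple zeros at $\pm ia$ in a strip $\Omega_{a+\delta}$ — correctly, and the rest (Gaussian damping to enter $H_e$, Proposition \ref{prop2.9}, Rudin's Lemma 3.9, and the radialization argument for non-radial $T$) is a faithful transcription of the paper's Theorem \ref{theorem2.15} argument.
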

		
		\begin{proposition}
				Let $t>0$ be fixed and $a = 0$. Let $T \in S^a(\R^d)'$ be such that $ e^{-t \D}T = e^{-t\la_{0}^2}T$ for some $\la_0 \in \R$. Then $T$ is an eigendistribution of $\D$ with eigenvalue $\la_{0}^2$.
		\end{proposition}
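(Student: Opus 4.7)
The statement is the heat-operator analog of Theorem~\ref{theorem2.15} in the borderline case $a=0$, so my plan is to parallel the $a=0$ radial argument of that proof while replacing the multiplier symbol $\phi_\la(te_1)$ by $e^{-t\la^2}$. First, I would reduce to the case where $T$ is radial exactly as in Theorem~\ref{theorem2.15}: for each $x\in\R^d$ the distribution $R\ell_xT$ is radial and still satisfies $e^{-t\D}(R\ell_xT)=e^{-t\la_0^2}(R\ell_xT)$, and once I prove $\D(R\ell_xT)=\la_0^2(R\ell_xT)$ for every $x$, the approximation of $T$ by heat-kernel convolutions $T*h_s\to T$ as $s\to 0$ transfers the eigenvalue equation back to $T$.

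With $T$ radial and $a=0$, pass to the spherical Fourier side: the hypothesis becomes
\[
(e^{-t\la^2}-e^{-t\la_0^2})\,\mathcal{H}T = 0
\]
as an element of $H_e(\R)'$. The entire function $\Psi_1(\la):=e^{-t\la^2}-e^{-t\la_0^2}$ has real zero set $\{\pm\la_0\}$, with simple zeros when $\la_0\neq 0$ and a single double zero at the origin when $\la_0=0$. Multiplying by the damping factor $e^{-\la^2}$ places $\Psi_2(\la):=e^{-\la^2}\Psi_1(\la)$ in $H_e(\R)$, and I would then mirror the $a=0$ portion of the proof of Theorem~\ref{theorem2.15} by invoking the structural results from Section~\ref{section4} (the analogs of Theorem~\ref{theorem4.7} and Lemma~\ref{lemma4.8}) to conclude $\mathrm{supp}(\mathcal{H}T)\subseteq\{\pm\la_0\}$ and hence
\[
T = \sum_{k=0}^{N} a_k\,\phi_{\la_0,k}
\]
for some nonnegative integer $N$ and constants $a_0,\ldots,a_N$ with $a_N\neq 0$. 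The identity $\phi_{-\la_0,k}=(-1)^k\phi_{\la_0,k}$ (from the evenness of $\la\mapsto\phi_\la$) collapses the two support points into a single family, so the cases $\la_0\neq 0$ and $\la_0=0$ are handled in parallel (only even $k$ surviving when $\la_0=0$).

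Finally, I would substitute this expression into the hypothesis, using the Leibniz-type identity
\[
e^{-t\D}\phi_{\la_0,k} = \frac{\pa^k}{\pa \la^k}\Big|_{\la=\la_0}\bigl(e^{-t\la^2}\phi_\la\bigr) = \sum_{i=0}^{k}\binom{k}{i}c_i\,\phi_{\la_0,k-i},\qquad c_i := \frac{\pa^i e^{-t\la^2}}{\pa \la^i}\Big|_{\la=\la_0},
\]
and compare the coefficient of $\phi_{\la_0,N-1}$ (respectively $\phi_{0,2(N-1)}$ when $\la_0=0$, since only even derivatives survive) on both sides of $e^{-t\D}T=e^{-t\la_0^2}T$. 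The resulting constraint is $a_N\cdot N\cdot c_1=0$ when $\la_0\neq 0$ and $a_N\cdot N(2N-1)\cdot c_2=0$ when $\la_0=0$; the crucial nonvanishings $c_1=-2t\la_0\,e^{-t\la_0^2}\neq 0$ (respectively $c_2=-2t\neq 0$) play exactly the role that $\phi_{0,2}(te_1)\neq 0$ played in Theorem~\ref{theorem2.15}, thereby forcing $N=0$. Consequently $T=a_0\phi_{\la_0}$, so $\D T=\la_0^2T$.

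The main obstacle is the support/structure step: Proposition~\ref{prop2.9} requires $0<b<a$ and does not apply directly at $a=0$, so everything hinges on the availability of the Section~\ref{section4} substitutes (the analogs of Theorem~\ref{theorem4.7} and Lemma~\ref{lemma4.8}) — the same mechanism invoked at the corresponding point of Theorem~\ref{theorem2.15}. Once those are in hand, the coefficient-comparison step is a mechanical adaptation of the spherical-mean argument; the only genuinely new input is the scalar factor $c_1=-2t\la_0\,e^{-t\la_0^2}$, which encodes the strict monotonicity of the heat symbol in $|\la|$ and is the heat-operator counterpart of the nonvanishing derivative of $\phi_\la(te_1)$ on the imaginary axis.
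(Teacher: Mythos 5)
Your proposal is correct and follows essentially the same route as the paper's own proof: reduce to radial $T$, apply Theorem~\ref{theorem4.7} and Lemma~\ref{lemma4.8} to deduce $\operatorname{Supp}\mathcal{H}T\subseteq\{\pm\la_0\}$ and $T=\sum_{k=0}^{N}a_k\phi_{\la_0,k}$, then compare the coefficient of $\phi_{\la_0,N-1}$ (resp.\ $\phi_{0,2(N-1)}$ when $\la_0=0$) using the nonvanishing of $(e^{-t\la^2})'\big|_{\la_0}$ (resp.\ $(e^{-t\la^2})''\big|_{0}$) to force $N=0$. You correctly identify, as the paper does implicitly, that Proposition~\ref{prop2.9} is unavailable at $a=0$ and the Section~\ref{section4} structure results take its place.
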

		
		\begin{proof}  For the sake of brevity, we shall give the proof for the case when $T$ is radial, as for the non-radial case the same arguments can be used as in Theorem \ref{theorem2.15}. By Theorem \ref{theorem4.7}, we obtain that $\text{Supp} \mathcal{H}T \subseteq \{ \la \in \R \mid |e^{-t\la^2}| = e^{-t\la_{0}^2}\} = \{\pm \la_{0}\}$. Now Lemma \ref{lemma4.8} yields that $T$ must be of the form $P_1(\pa_{\la}) \phi_{\la} |_{\la= \la_{0}} + P_2(\pa_{\la}) \phi_{\la} |_{\la= -\la_{0}} $. Since $\phi_{\la}$ being even as a function of $\la$, we shall further simplify  the last expression to obtain $T = \sum_{k=0}^{N} a_k \phi_{\la_{0},k}(x)$ where $a_0, a_1, \ldots, a_N$ are constants with $a_N \neq 0$. It can be easily verified that $$ e^{-t \D} T = \sum_{k=0}^{N} a_k \frac {\pa^k}{\pa \la^k }(e^{-t\la^2} \phi_{\la})|_{\la =\la_0}.$$ Thus it follows from the hypothesis that 
$$ e^{-t\la_{0}^2} \sum_{k=0}^{N} a_k \phi_{\la_{0},k} = \sum_{k=0}^{N} a_k \sum_{j=0}^{k} \binom{k}{j}  \phi_{\la_{0},j} \frac{d^{k-j}}{d\la^{k-j}} (e^{-t\la^2})|_{\la= \la_0}. $$
		Assuming $N \geq 1$, on comparing the coefficient of $\phi_{\la_0,N-1}$ on both sides we get $ a_{N}N e^{-t\la_{0}^2}(-2t\la_{0}) =0$ which implies that $a_{N} =0$ if $\la_0\ne 0$.  Therefore $T =a_{0} \phi_{\la_{0}}$ and hence $  \D T = \la_{0}^2 T$. Similar arguments work for the case $\la_0=0$ on comparing the coefficients of $\phi_{0, N-2}$ and we get that $N\leq 1$. As a consequence $T$ will be a polynomial in $x$ of degree $1$. Therefore $\D T=0$.
		\end{proof}
\section{Characterization of exponential type tempered eigendistributions}\label{section3}
This section presents our central results- extended version of Strichartz's theorem for spherical averages, ball averages and the heat operator. First we shall present couple of introductory lemmas and later proceed to the proof of our main results Theorem \ref{theorem3.3}, \ref{theorem3.4} and \ref{theorem3.6}.
\begin{lemma}\label{lemma3.1}
	Let $\Theta: S^a(\R^d) \to S^a(\R^d)$ be a multiplier and $\{T_k\}_{k \in \Z^+}$ be an infinite sequence of radial exponential type tempered distributions. Suppose that $\{T_k\}_{k \in \Z^+}  $ satisfies the following conditions: \begin{enumerate}
		\item    $ \Theta T_k= A T_{k+1}$ for a nonzero constant $A \in \C $ and for all $k \in \Z^+$.
		\item $|\langle T_k, \psi \rangle| \leq M \gamma(\psi)$ for a fixed seminorm $\gamma$ of $S^a(\R^d)$ and a constant $M>0$.
	\end{enumerate} If $(\Theta-B)^{N+1} T_0 = 0$ for some $ B \in \C$ with $|B| =|A|$ and $N \in \Na $, then $ \Theta T_0 = BT_0$. 
\end{lemma}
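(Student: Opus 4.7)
My plan is to express each distribution $T_k$ as a combination of the finitely many distributions $(\Theta - B)^j T_0$, $j = 0, \dots, N$, and then read off the vanishing of the higher-order terms from the boundedness hypothesis. First, I would iterate the recurrence $\Theta T_k = A T_{k+1}$ to obtain $T_k = A^{-k} \Theta^k T_0$ in $S^a(\R^d)'$, using the canonical dual extension of $\Theta$ recorded in the remark following the definition of a Fourier multiplier.

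Next, I would expand $\Theta^k = ((\Theta - B) + B)^k$ by the binomial theorem. Since $(\Theta - B)^{N+1} T_0 = 0$ implies $(\Theta - B)^j T_0 = 0$ for all $j \geq N + 1$, the sum truncates at $j = N$, giving the key identity
\[
T_k = (B/A)^k \sum_{j=0}^{N} \binom{k}{j} B^{-j} (\Theta - B)^j T_0
\]
for every $k \in \Z^+$ (with the convention $\binom{k}{j} = 0$ when $j > k$).

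Pairing against an arbitrary $\psi \in S^a(\R^d)$ and writing $c_j(\psi) = B^{-j} \langle (\Theta - B)^j T_0, \psi \rangle$, the identity becomes
\[
(A/B)^k \langle T_k, \psi \rangle = \sum_{j=0}^{N} \binom{k}{j}\, c_j(\psi).
\]
The hypothesis $|B| = |A|$ makes the modulus of the left-hand side bounded in $k$ by $M \gamma(\psi)$, while the right-hand side is a polynomial in $k$ of degree at most $N$. Since $\binom{k}{0}, \binom{k}{1}, \dots, \binom{k}{N}$ are linearly independent as polynomials in $k$ and any polynomial bounded on $\Z^+$ must be constant, I conclude $c_j(\psi) = 0$ for every $j = 1, \dots, N$ and every $\psi \in S^a(\R^d)$. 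The case $j = 1$ is precisely $\Theta T_0 = B T_0$.

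The main obstacle I anticipate is bookkeeping rather than anything technical: I need to verify that the iteration $T_k = A^{-k} \Theta^k T_0$ and the binomial expansion of $\Theta^k$ make sense as identities in $S^a(\R^d)'$, which ultimately rests on the continuity of $\Theta$ on $S^a(\R^d)$. Once this is in place, the proof reduces to the single observation that the cancellation $|B/A|^k = 1$ is exactly what lets the seminorm bound rule out every non-constant term of the polynomial in $k$.
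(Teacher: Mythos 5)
Your proof is correct and takes a genuinely different (and arguably more transparent) route from the paper's. The paper argues by contradiction: it assumes $(\Theta-B)T_0\neq 0$, picks the maximal $k_0$ with $(\Theta-B)^{k_0}T_0\neq 0$, sets $T=(\Theta-B)^{k_0-1}T_0$ so that $(\Theta-B)^2T=0$ while $(\Theta-B)T\neq 0$, and then the binomial expansion $\Theta^kT=B^kT+kB^{k-1}(\Theta-B)T$ produces a bound $\lvert\langle(\Theta-B)T,\psi\rangle\rvert\leq\frac{C}{k}$ that sends the nonzero term to zero as $k\to\infty$, a contradiction. In effect the paper first reduces to the nilpotency-order-2 case and reads off only the linear-in-$k$ coefficient. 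Your argument skips that reduction entirely: you expand $T_k=(B/A)^k\sum_{j=0}^N\binom{k}{j}B^{-j}(\Theta-B)^jT_0$ in one stroke, observe that $(A/B)^k\langle T_k,\psi\rangle$ is a polynomial of degree at most $N$ in $k$ that is uniformly bounded since $\lvert A/B\rvert=1$, and conclude that all coefficients with $j\geq 1$ must vanish because a nonconstant polynomial is unbounded on $\Z^+$. This yields $(\Theta-B)^jT_0=0$ for all $j\geq 1$ simultaneously rather than just $j=1$. Both proofs rest on the same cancellation $\lvert B/A\rvert^k=1$ and the uniform seminorm bound; yours is more direct and dispenses with the maximal-index contradiction, while the paper's reduction makes the estimate particularly simple (a $\frac{1}{k}$ decay rather than a polynomial-versus-bound argument). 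Your anticipated obstacle is indeed nonessential: the binomial identity for $\Theta^k$ holds in the commutative algebra generated by the continuous linear operator $\Theta$ and scalar multiplication on $S^a(\R^d)'$, so no analytic issue arises.
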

\begin{proof}
 It follows from the hypothesis  $(\Theta-B)^{N+1} T_0 = 0$ that 
	\begin{equation*}
		Span\{T_0, T_1, \ldots\}= Span\{T_0, \Theta T_0,\ldots, \Theta^N T_0\}= Span\{T_0,T_1, \ldots,T_N\}
	\end{equation*}
		We will  prove that $(\Theta-B)T_0 =0$. On the contrary, suppose that $(\Theta- B)T_0 \neq 0$. Let $k_0$ be the largest positive integer such that $(\Theta -B)^{k_0}T_0 \neq 0$. Clearly $k_0 \leq N$. 

Let $T=(\Theta-B)^{k_0-1}T_0$, then $T \in Span \{T_0,T_1, \ldots,T_N\}$. Therefore we assume $T = \sum_{j=0}^N a_jT_j$. Then 
	\begin{equation}\label{eqn8}
		(\Theta-B)^2 T=0\quad \mbox{and} \quad (\Theta-B)T \neq 0.
	\end{equation}
	
	Now, using \eqref{eqn8} and binomial expansion, we obtain \begin{equation*}
		\Theta^k T= ((\Theta-B)+B)^kT
		= B^k T+ k B^{k-1}(\Theta-B)T.
	\end{equation*} for $k\geq 2$. Hence for any $\psi \in \sa(\R^d)$,
	\begin{equation}\label{eqn9}
		|\langle (\Theta-B)T, \psi  \rangle| \leq \frac{1}{k}|A|^{1-k} |\langle \Theta^k T, \psi \rangle | + \frac{1}{k} |A||\langle T, \psi \rangle|.
	\end{equation}
	Consider 
	\begin{align*}
		|\langle \Theta^k T, \psi \rangle| & =\left|\left\langle \Theta^k \sum_{j=0}^N a_jT_j, \psi \right\rangle \right| =\left |\left \langle \sum_{j=0}^N a_jA^kT_{j+k}, \psi \right\rangle \right|\\
		& = |A|^k\left|\left \langle \sum_{j=0}^N a_j T_{j+k}, \psi \right \rangle \right | \leq |A|^k \sum_{j=0}^N \left |a_j\right| \left|\left \langle T_{j+k}, \psi \right\rangle \right| \\
		& \leq M |A|^k \gm(\psi) \sum_{j=0}^N |a_j|.
	\end{align*}
	
	Thus \eqref{eqn9} and above inequality implies that,
	\begin{equation*}
		|\langle (\Theta-B)T, \psi  \rangle| \leq \frac{M}{k} |A| \gm(\psi) \sum_{j=0}^N |a_j| + \frac{1}{k} |A| |\langle T, \psi \rangle|.
	\end{equation*}As the right hand side goes to 0 as $ k \rightarrow \infty$, we get $(\Theta-B)^{k_0}T_0 = 0$, which contradicts the assumption on $k_0$. Thus, we obtain $ (\Theta-B)T_0 =0 $. 
\end{proof}

\begin{lemma} \label{lemma3.2}
	Let $t>0$ and $a>0$ be fixed. For $A \in \C$ with $|A| < \phi_{ia}(te_1),$ there exists infinitely many $\la \in \Omega_{a}$ with $ |\phi_{\la}(te_1)| = |A|$. Further if $A \neq 0$, then there exists distinct $\la_1, \la_2 \in \Omega_{a}$ such that   $|\phi_{\la_1}(te_1)| = |A| = |\phi_{\la_2}(te_1)|$ and $ \phi_{\la_1}(te_1) \neq \phi_{\la_2}(te_1)$.
	
\end{lemma}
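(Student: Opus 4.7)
The plan is to analyze the super-level set $U = \{\la \in \Omega_a : |\phi_\la(te_1)| > |A|\}$ and exploit the fact that its topological boundary relative to $\Omega_a$ is contained in the level set $L = \{\la \in \Omega_a : |\phi_\la(te_1)| = |A|\}$. The key inputs are the strict maximum-modulus statement of Proposition~\ref{Prop2.2} and the uniform decay at infinity of Proposition~\ref{prop 2.3}.

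For the first assertion, Proposition~\ref{Prop2.2} combined with $|A| < \phi_{ia}(te_1)$ places $\pm ia$ inside $U$, so $U$ is non-empty, while Proposition~\ref{prop 2.3} forces $U$ to be bounded. Continuity of $\la \mapsto |\phi_\la(te_1)|$ makes $U$ relatively open in $\Omega_a$. I would then argue by contradiction that $\partial_{\Omega_a} U$ is infinite: if it were finite, then $\Omega_a \setminus \partial_{\Omega_a} U$ would remain connected, since deleting finitely many points from the two-dimensional connected strip $\Omega_a$ preserves connectedness, yet it splits as the disjoint union of the two non-empty relatively open sets $U$ and $\Omega_a \setminus \overline{U}$, a contradiction. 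Thus $\partial_{\Omega_a} U$ is an infinite subset of $L$ by continuity, establishing the first claim.

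For the second assertion, assume $A \neq 0$. Then $L$ is bounded (again by Proposition~\ref{prop 2.3}, using $|A| > 0$) and infinite by the previous step, so Bolzano-Weierstrass yields an accumulation point of $L$ in $\C$. Suppose for contradiction that $\phi_\la(te_1) = c_0$ for every $\la \in L$ and some constant $c_0 \in \C$. Then the entire function $\la \mapsto \phi_\la(te_1) - c_0$ vanishes on a set with an accumulation point, and the identity theorem forces $\phi_\la(te_1) \equiv c_0$ on $\C$. This contradicts $\phi_0(te_1) = 1$ together with the decay $|\phi_\la(te_1)| \to 0$ as $|\la| \to \infty$. Hence there exist distinct $\la_1, \la_2 \in L$ with $\phi_{\la_1}(te_1) \neq \phi_{\la_2}(te_1)$, as required.

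The subtlety I expect to require the most care is the topological step excluding a finite boundary: because $U$ touches $\partial \Omega_a$ at $\pm ia$, the argument must be carried out in the relative topology of the closed strip $\Omega_a$ rather than inside $\C$. Once this is handled, the standard fact that removing finitely many points from a two-dimensional connected region preserves connectedness closes the argument cleanly.
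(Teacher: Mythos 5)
Your topological argument is genuinely different from the paper's, which produces points of the level set $L = \{\la \in \Omega_a : |\phi_\la(te_1)| = |A|\}$ by combining the strict monotonicity of $y \mapsto \phi_{iy}(te_1)$ on $[0,a]$ with an intermediate-value argument along horizontal lines of the strip (using the uniform decay of Proposition~\ref{prop 2.3} to drive $|\phi_\la(te_1)|$ below $|A|$ for $|\re(\la)|$ large). Your boundary-of-superlevel-set decomposition, together with the observation that removing finitely many points from a connected two-dimensional region cannot disconnect it, is a clean alternative and works correctly for $A \neq 0$. The identity-theorem step for the second assertion is essentially the same as the paper's, which invokes that zeros of a nonzero analytic function are isolated.

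However, there is a genuine gap in the case $A = 0$, which the first assertion of the lemma still covers. When $|A| = 0$, Proposition~\ref{prop 2.3} no longer makes $U = \{\la \in \Omega_a : |\phi_\la(te_1)| > 0\}$ bounded: decay to zero is not equality to zero, so $U$ is all of $\Omega_a$ minus a discrete set. Worse, $\Omega_a \setminus \overline{U}$ is empty: $\la \mapsto \phi_\la(te_1)$ is a non-constant entire function, so its zero set consists of isolated points, each of which is a limit of points of $U$, whence $\overline{U} = \Omega_a$. There is therefore no separation to exploit, and the infinitude of the zero set is extra information not captured by the general decay alone (entire functions decaying on $\R$ need not have any zeros, as $e^{-\la^2}$ shows). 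The paper disposes of this case separately using the explicit Bessel-function representation $\phi_\la(te_1) = c_d\, J_{\frac{d}{2}-1}(\la t)/(\la t)^{\frac{d}{2}-1}$, whose infinitely many real zeros supply the required $\la$'s; you would need to add this (or an equivalent) argument for $A = 0$.
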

\begin{proof}
	Let $A \neq 0$. As  $|A| < \phi_{ia}(te_1),$ and using the fact that map $y \mapsto \phi_{iy}(te_1)$ being analytic and strictly increasing it is possible to find two values $a_1$ and $\ap$ such that $0<a_1 \leq \ap <a$ with $\phi_{i\ap}(te_1) \geq \phi_{ia_1}(te_1) \geq |A|$. By Proposition \ref{prop 2.3}, we have seen that $ |\phi_{\la}(x)| \to 0$ uniformly on ${\Omega_{a}}$, as $|\la| \to \infty$. Thus for each fixed $\ap$ as mentioned above, there exists a $\bt$ such that $|\phi_{i\ap + \bt} (te_1)|=|A|.$ Now the set $\{\la \in \Omega_{a} |~ |\phi_{\la}(te_1)| = |A|\}$ being uncountable and since zeroes of a non-zero analytic function are isolated, there exists $\la_1, \la_2 \in \Omega_{a}$ such that $\phi_{\la_1}(te_1) \neq \phi_{\la_2}(te_1)$ and $|\phi_{\la_1}(te_1)| = |\phi_{\la_2}(te_1)| = |A|$. \\
	The case when $A=0$ follows immediately from the fact that there exists infinitely many $\la \in \R$ which satisfies $\phi_{\la}(te_1) = 0$, once we use the expression for $\phi_{\la}$ involving Bessel functions as given in Subsection \ref{subsection2.1} 
\end{proof}
	
	Equipped with these preliminary results, we proceed to the proofs of our principal theorems.
	\begin{theorem}\label{theorem3.3}
		For a fixed $t>0$ and any $a>0$, let $\{T_k\}$ be a  doubly infinite sequence of exponential type tempered distributions on $\R^d$ satisfying for all $k \in \Z$
		\begin{enumerate}
			\item $M_t T_k = A T_{k+1}$ for some $A \in \C$ and 
			\item $|\langle T_k, \psi \rangle | \leq  M \gamma(\psi)$ for a fixed seminorm $\gamma$ of $S^a(\R^d)$ and a constant $M>0$. 	\end{enumerate}
			Then the following assertions holds. \begin{enumerate}[(a)]
				\item If $|A| = \phi_{ia}(te_1)$, then $\D T_0 = -a^2 T_0$.
				\item If $|A| > \phi_{ia}(te_1)$, then $T_0=0.$
				\item If $|A| < \phi_{ia}(te_1)$, then $T_0$ may not be an eigendistribution of $\D$. If $A$ is also assumed to be non-zero, then $T_0$ may not be an eigendistribution of $M_t$. 
			\end{enumerate}
	
	\end{theorem}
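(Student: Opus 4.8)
The plan is to exploit the spherical Fourier transform to convert the recurrence $M_t T_k = A T_{k+1}$ into a multiplicative identity, then appeal to the One Radius Theorem (Theorem~\ref{theorem2.15}) to promote eigendistributions of $M_t$ to eigendistributions of $\Delta$. The three cases are distinguished entirely by the location of $|A|$ relative to the maximal value $\phi_{ia}(te_1)$ of the symbol $\phi_{\lambda}(te_1)$ on the strip $\Omega_a$, so the proof hinges on the structural facts about $\phi_\lambda(te_1)$ established in Propositions~\ref{Prop2.2}, \ref{prop 2.3}, \ref{prop 2.4} together with Lemma~\ref{lemma3.2}.

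Let me think about the three cases. For case (b): if $|A| > \phi_{ia}(te_1)$, I would argue that no eigendistribution can exist. Since $\mathcal{H}M_t T = \phi_\lambda(te_1)\mathcal{H}T$, the recurrence gives $\phi_\lambda(te_1)\mathcal{H}T_k = A\,\mathcal{H}T_{k+1}$, hence $|\phi_\lambda(te_1)| < \phi_{ia}(te_1) \le |A|$ forces (via iterating forward and using the uniform bound $|\langle T_k,\psi\rangle| \le M\gamma(\psi)$) that $\mathcal{H}T_0$ is supported where $|\phi_\lambda(te_1)| \ge |A|$, which is empty in $\Omega_a$ by Proposition~\ref{Prop2.2}; I would make this rigorous by testing against $\mathcal{H}^{-1}\phi$ and sending the iteration index to infinity, much as in Lemma~\ref{lemma3.1}. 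For case (a): with $|A| = \phi_{ia}(te_1)$, the only point of $\Omega_a$ where $|\phi_\lambda(te_1)| = |A|$ is $\lambda = \pm ia$ (Proposition~\ref{Prop2.2}), and $\phi_{ia,1}(te_1) \ne 0$ by Proposition~\ref{prop 2.4}. So I would first reduce to the radial case by radializing and translating (exactly the device used at the end of Theorem~\ref{theorem2.15}: if $R\ell_x T_0 = 0$ for all $x$ then $T_0 = 0$), then show $\mathrm{Supp}\,\mathcal{H}T_0 \subseteq \{\pm ia\}$, argue the jet at $\pm ia$ has length zero using Lemma~\ref{lemma3.1} with $\Theta = M_t$ and $B$ a suitable unimodular multiple of $A$ (this is precisely why Lemma~\ref{lemma3.1} was isolated), conclude $M_t T_0 = \phi_{ia}(te_1) T_0$, and finally invoke Theorem~\ref{theorem2.15} to get $\Delta T_0 = -a^2 T_0$.

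For case (c): I expect this to require an explicit counterexample rather than a structural argument. By Lemma~\ref{lemma3.2}, when $|A| < \phi_{ia}(te_1)$ there are distinct $\lambda_1, \lambda_2 \in \Omega_a$ with $\phi_{\lambda_1}(te_1) \ne \phi_{\lambda_2}(te_1)$ but equal modulus $|A|$; writing $\phi_{\lambda_j}(te_1) = |A|e^{i\theta_j}$, I would set $T_k = e^{ik\theta_1}\phi_{\lambda_1} + e^{ik\theta_2}\phi_{\lambda_2}$ (or a rescaled version to meet the seminorm bound), which satisfies $M_t T_k = A T_{k+1}$ after absorbing a phase into $A$, has the required exponential growth since $|\phi_{\lambda_j}(x)| \le C|x|^{-(d-1)/2}e^{a|x|}$, yet $T_0$ is a sum of two eigendistributions of $\Delta$ with distinct eigenvalues $\lambda_1^2 \ne \lambda_2^2$ (generically), hence not an eigendistribution of $\Delta$; and when $A \ne 0$, since $\phi_{\lambda_1}(te_1) \ne \phi_{\lambda_2}(te_1)$, $T_0$ is not an eigendistribution of $M_t$ either. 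A small subtlety is arranging $\lambda_1^2 \ne \lambda_2^2$, i.e.\ $\lambda_2 \ne -\lambda_1$; but since the level set $\{|\phi_\lambda(te_1)| = |A|\}$ is uncountable while $\{\lambda, -\lambda\}$ is a pair, one can always choose $\lambda_1, \lambda_2$ outside a single $\pm$-orbit.

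**The main obstacle** I anticipate is the bookkeeping in case (a): showing $\mathrm{Supp}\,\mathcal{H}T_0 \subseteq \{\pm ia\}$ and that the distributional jet there is supported to order zero. The support statement needs the forward recurrence together with Proposition~\ref{Prop2.2} (values $|\phi_\lambda(te_1)| < |A|$ off $\{\pm ia\}$ would make $\mathcal{H}T_k$ blow up, contradicting uniform boundedness) and uniform decay at infinity (Proposition~\ref{prop 2.3}) to rule out escape of support to infinity; this parallels the density argument via Proposition~\ref{prop2.9} used in Theorem~\ref{theorem2.15}, and I would cite Theorem~\ref{theorem4.7} and Lemma~\ref{lemma4.8} for the structure theorem at the isolated support points. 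Reducing the order of the jet from $N$ to $0$ is exactly what Lemma~\ref{lemma3.1} delivers once one observes $(M_t - \phi_{ia}(te_1))^{N+1} T_0 = 0$ follows from $\mathrm{Supp}\,\mathcal{H}T_0 \subseteq \{\pm ia\}$ plus the simplicity $\phi_{ia,1}(te_1)\ne 0$; the one-sided phase $e^{i\theta}$ appearing when $A \notin \R$ is handled by taking $B = \phi_{ia}(te_1)\cdot A/|A|$ and noting $|B| = |A|$.
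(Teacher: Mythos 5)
Parts (b) and (c) of your proposal coincide with the paper's proof: for (b) you iterate forward, use $|\phi_{\la}(te_1)|\le \phi_{ia}(te_1)<|A|$ uniformly on $\Omega_a$ (Proposition~\ref{Prop2.2}) to kill the relevant seminorm, and for (c) the counterexample $T_k=e^{ik\theta_1}\phi_{\la_1}+e^{ik\theta_2}\phi_{\la_2}$ built from Lemma~\ref{lemma3.2} is exactly the paper's. The reduction from non-radial to radial via $R\ell_y$ and the final appeal to the One Radius Theorem also match.

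The gap is in part (a), at precisely the step you flag as the main obstacle. You propose to get $\mathrm{Supp}\,\mathcal{H}T_0\subseteq\{\pm ia\}$ and a finite-order jet by citing Theorem~\ref{theorem4.7} and Lemma~\ref{lemma4.8}, but those results live in the classical tempered setting ($a=0$), where $\widehat{T}_0$ is a distribution on $\R^d$ and can be localized with cutoffs. For $a>0$, $\mathcal{H}T_0$ is a functional on $H_e(\Omega_a)$, a space of holomorphic functions admitting no compactly supported elements, so there is no localization theory and no structure theorem of Lemma~\ref{lemma4.8} type available; and the alternative route through Proposition~\ref{prop2.9} is circular, since building the annihilating function $g_0$ there already presupposes $M_tT_0=\phi_{ia}(te_1)T_0$, which is the conclusion sought. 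The paper instead proves $(M_t-\phi_{ia}(te_1))^{N+1}T_0=0$ directly: it tests $(\phi_\la(te_1)-\phi_{ia}(te_1))^{N+1}\mathcal{H}T_0$ against $\phi\in H_e(\Omega_a)$, rewrites this via the backward iterates $T_{-k}$ as a seminorm of $(\phi_\la(te_1)/A)^k(\phi_\la(te_1)-\phi_{ia}(te_1))^{N+1}\phi$, and establishes the uniform limit
\[
\sup_{z\in\mathbb{D}_s\cup\{1\}}k^{\tau}|z|^{k-\tau}|z-1|^{N+1-\tau}\to 0
\]
on a disc internally tangent to the unit circle at $1$ (with $N$ chosen large relative to the order $\tau$ of the seminorm $\gamma$). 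This quantitative estimate is the actual content of part (a) beyond the One Radius Theorem, and your proposal does not supply it or a substitute. A secondary point: the correct $B$ in Lemma~\ref{lemma3.1} is $\phi_{ia}(te_1)$ itself, not $\phi_{ia}(te_1)A/|A|$; the symbol attains modulus $|A|$ on $\Omega_a$ only at $\la=\pm ia$, where its value is the real number $\phi_{ia}(te_1)$, so the phase of $A$ is absorbed into the $T_k$'s rather than into the eigenvalue.
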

	
	\begin{proof}(a)
		First, we shall consider the case when $T_k$'s are radial. Let $|A|=  \phi_{ia}(te_1)$.  From condition (1) as mentioned in the hypothesis, we obtain $M_t^k T_{-k} = A^k T_0$ and hence $A^{k} \mathcal{H}{T}_0 = \phi_\la(te_1)^k \mathcal{H}{T_{-k}}$. \\The proof shall comprise two steps. In step (1), we shall show that \begin{equation}\label{eq03}
			(M_t- \phi_{ia}(te_1))^{N+1} {T_0} = 0 	\end{equation}  for some $N \in \Z ^+$  or equivalently $ \langle (\phi_\la(te_1) - \phi_{ia}(te_1))^{N+1} \mathcal{H}{T_0}, \phi \rangle = 0 $ for every $\phi \in H_e(\Omega_a)$. Therefore consider for any $\phi \in H_e(\Omega_a)$,
		\begin{align*}
			|\langle(\phi_\la(te_1) - \phi_{ia}(te_1))^{N+1}\mathcal{H}{T_0}, \phi \rangle|  &= \left|\left\langle \mathcal{H}{T_{-k}}, \left(\frac{\phi_\la(te_1)}{A} \right)^k(\phi_\la(te_1) - \phi_{ia}(te_1))^{N+1} \phi \right\rangle\right|\\ &= \left | \left\langle T_{-k}, \mathcal{H}^{-1}\left\{ \left(\frac{\phi_\la(te_1)}{A} \right)^k(\phi_\la(te_1) - \phi_{ia}(te_1))^{N+1} \phi\right\} \right\rangle \right|  \\
			&\leq M \gm \bigg[\mathcal{H}^{-1} \left\{\left(\frac{\phi_\la(te_1)}{A} \right)^k(\phi_\la(te_1) - \phi_{ia}(te_1))^{N+1} \phi\right\} \bigg] \\
			&\leq   M' \mu\bigg[ \left(\frac{\phi_\la(te_1)}{A} \right)^k(\phi_\la(te_1) - \phi_{ia}(te_1))^{N+1} \phi\bigg] 
		\end{align*} where the seminorm $\mu$ is given by $$\mu_{m,\ap}(\phi)= \sup_{\la \in \Omega_{a}^+} \left| \frac{d^\tau}{d\la^\tau} P(\la)\phi(\la)\right|$$ for some even polynomial $P(\la)$ and derivative of even order $\tau$. Let $N = 17\tau + 7$ be fixed. Henceforth, we shall use the following notations. Let $F^k$ denotes the term:  $$\left| \frac{d^\tau}{d\la^\tau} P(\la)\left(\frac{\phi_\la(te_1)}{A} \right)^k(\phi_\la(te_1) - \phi_{ia}(te_1))^{N+1}\phi(\la)\right|$$
		We intend to show that $\sup_{\la\in\Omega_{a}^+} F^k \to 0,$ as $ k \to \infty$. 
		Note that 
		
		\begin{multline} \label{eq02}
			\frac {d^\tau}{d\la^\tau} \bigg(P(\la)\left(\frac{\phi_{\la}(te_1)}{A} \right)^k(\phi_{\la}(te_1) - \phi_{ia}(te_1))^{N+1}\phi \bigg)\\ = \sum_{i+j+l= \tau} C_{ijl} \frac{d^i}{d\la^i} \bigg(\frac{\phi_{\la}(te_1)}{A}\bigg)^k \frac{d^j}{d\la^j} (\phi_{\la}(te_1) - \phi_{ia}(te_1))^{N+1} \frac{d^l}{d\la^l} (P(\la)\phi).
	\end{multline} 
	
		Using $\left| \frac{\phi_\la(te_1)}{A} \right | = \left| \frac{\phi_\la(te_1)}{\phi_{ia}(te_1)} \right| \leq 1 $ for $\la\in\Omega_a$ and $ \phi \in H_e(\Omega_{a}),$ we have for $\la \in \Omega_{a}$, 
		\begin{equation}\label{eq05}
			F^k(\la) \leq C k^{\tau} \left(\frac{\phi_\la(te_1)}{\phi_{ia}(te_1)} \right)^{k-\tau} \left|\frac{\phi_\la(te_1)}{\phi_{ia}(te_1)} - 1 \right| ^{N+1-\tau}
		\end{equation} 
		 for some constant $C$. From Proposition \ref{prop 2.3}, we observed that $|\phi_{\la}(te_1)| \to 0$ uniformly on ${\Omega_{a}}$, as $|\la| \to \infty$ and hence we can find a compact connected neighborhood $V$ of $ia$ in ${\Omega_{a}}$ such that if $\la \notin V$, then $|\phi_\la(te_1)| < \frac{\phi_{ia}(te_1)}{2}$. From \eqref{eq05}, it is clear that $F^k \to 0$ as $k \to \infty$ uniformly on $\Omega \setminus V$. Now we need to establish that 
		\begin{equation}\label{eq06}
			\sup_{\la \in V} k^{\tau} \left( \frac{\phi_{\la}(te_1)}{\phi_{ia}(te_1)}\right)^{k-\tau}  \left|\frac{\phi_\la(te_1)}{\phi_{ia}(te_1)} - 1 \right| ^{N+1-\tau} \to 0.
		\end{equation}
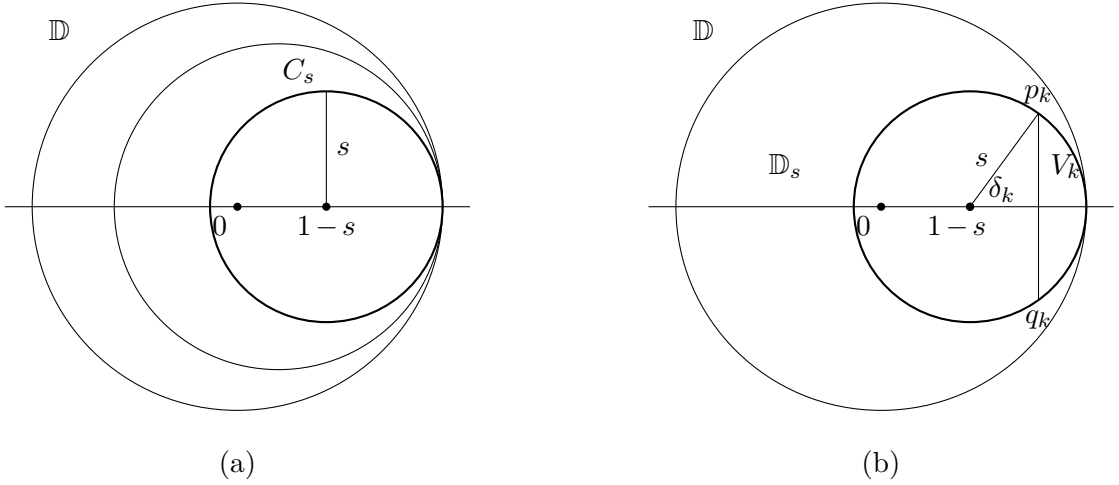
\begin{figure}
		
	\begin{tikzpicture}[scale=1.8, >=latex]
		
		\def\R{1.5}       
		\def\s{0.85}      
		\def\bigS{1.2}    
		\def\gap{4.0}     
		
		\begin{scope}
			\draw[-] (-\R-0.2, 0) -- (\R+0.2, 0);
			
			\draw (0,0) circle (\R);
			\node at (-\R+0.2, \R-0.2) {$\mathbb{D}$};
			
			\draw (\R-\bigS, 0) circle (\bigS);
			
			\coordinate (Center) at (\R-\s, 0);
			\draw[thick] (Center) circle (\s);
			
			\draw (Center) -- ++(0, \s) node[midway, right] {$s$};
			\fill (Center) circle (0.03) node[below] {$1-s$};
			\fill (0,0) circle (0.03) node[below left] {$0$};
			
			\node at ($ (Center) + (-0.2, \s+0.15) $) {$C_s$};
			
			\node at (0, -\R - 0.4) {(a)};
		\end{scope}

	\hspace{0.5in} 

		\begin{scope}[shift={(\gap,0)}]
			\draw[-] (-\R-0.2, 0) -- (\R+0.2, 0);
			
			\draw (0,0) circle (\R);
			\node at (-\R+0.2, \R-0.2) {$\mathbb{D}$};
			
			\coordinate (Center) at (\R-\s, 0);
			\draw[thick] (Center) circle (\s);
			\node at (-\R+0.8, 0.3) {$\mathbb{D}_s$};
			
			\def\dx{0.5} 
			\coordinate (Deltak) at ($ (Center) + (\dx, 0) $);
			
			\pgfmathsetmacro{\y}{sqrt(\s*\s - \dx*\dx)}
			\coordinate (Pk) at ($ (Deltak) + (0, \y) $);
			\coordinate (Qk) at ($ (Deltak) - (0, \y) $);
			
			\draw (Pk) -- (Qk);
			
			\draw (Center) -- (Pk) node[midway, left, xshift=-2pt] {$s$};
			
			\fill (0,0) circle (0.03) node[below left] {$0$};
			\fill (Center) circle (0.03) node[below, xshift=-5pt] {$1-s$};
			\fill (Center) circle (0.03) node[above right, xshift=3pt, yshift=-2pt] {$\delta_k$};

			\node at (Pk) [above] {$p_k$};
			\node at (Qk) [below] {$q_k$};
			\node at ($ (Deltak) + (0.2, 0.3) $) {$V_k$};
			
			\node at (0, -\R - 0.4) {(b)};
		\end{scope}
		
	\end{tikzpicture}
	 \caption{A covering of $ \mathbb{D} \cup \{1\}$  by $\{\mathbb{D}_s \cup \{1\} | 0<s<1\}$, where $\mathbb{D}_s$ are discs bounded by circles $C_s$ of radius $s$ and center $((1-s),0)$}
	\end{figure}
			
			Let $\mathbb{D}$ denotes an open unit disk. Clearly $\mathbb{D} \cup \{1\}$ can be covered by $\{\mathbb{D}_s \cup \{1\} | 0<s<1 \}$ where $\mathbb{D}_s$ is a one-parameter family of open disks of radius $s$ bounded by the circle $C_s : (x-(1-s))^2 + y^2 = s^2 $. One can observe that if $s<s'$, then $\mathbb{D}_s \subset \mathbb{D}_{s'}$. Now using the facts that $V$ being compact and connected and since $\left|\frac{\phi_{\la}(te_1)}{\phi_{ia}(te_1)} \right| <1$ for $ \la \in \Omega_a, \la \neq \pm ia$, the image of $V$ under the map $\la \mapsto \frac{\phi_{\la}(te_1)}{\phi_{ia}(te_1)}$ is a connected set contained inside the set $\mathbb{D}_s \cup \{1\}$ for some $0<s<1$. Let us assume that $0 \in \mathbb{D}_s.$ In view of \eqref{eq06}, it suffices to prove that $ k^{\tau} z^{k-\tau} |z-1| ^{N+1-\tau} \to 0$, uniformly as $k \to \infty$ on   
		$\mathbb{D}_s \cup \{1\}$.
		Let $p_k$ and $q_k$ denotes two points on $C_s$ given by
		\[
		p_k = (1 - s) + s e^{i\delta_k} \quad \text{and} \quad q_k = (1 - s) + s e^{-i\delta_k},
		\]
		for some $0 < \delta_k < \pi/2$ so that $s - s \cos \delta_k = k^{-1/4}$. Let $V_k$ be the intersection of $\mathbb{D}_s \cup \{1\}$ with the minor circular segment of width $k^{-1/4}$ joining the points $p_k$, $q_k$ and $1$ (as shown in Fig.~1(b)). Precisely,
		\[
		V_k = \{ z \in D_s \mid |1 - \Re(z)| < k^{-1/4} \} \cup \{1\}.
		\]
		
		Note that, $|p_k|^2 = 1 - (2s - 2s^2)(1 - \cos(\delta_k)) = 1 - 2(1 - s) k^{-1/4}$
	and	\\ $|p_k - 1|^2 = \left| 1 - \left( (1 - s) + s e^{i \delta_k} \right) \right|^2 = 2s k^{-1/4}.$ For $z \in \mathbb{D}_s \setminus V_k$, we have $ |z| \leq |p_k| $ and hence for some constant $C'$,
		\begin{align}
			k^\tau |z|^{k - \tau} |z - 1|^{N + 1 - \tau}  \notag
			&\leq C' k^\tau |p_k|^{k - \tau} \\
			&= C' k^\tau \left( 1 - 2(1 - s) k^{-1/4} \right)^{k - \tau},\label{eq:vk1}
		\end{align}
		For $ z \in V_k $, we have $|z - 1| \leq |p_k - 1| $ and therefore for some constants $C'$ and $ C'' $,
		\begin{align}
			k^\tau |z|^{k - \tau} |z - 1|^{N + 1 - \tau} \notag
			&\leq C' k^\tau |p_k - 1|^{N + 1 - \tau} \\  \notag
			&= C' k^\tau \left( 2s k^{-1/4} \right)^{N + 1 - \tau} \\  
			&\leq C'' k^{-(\tau + 1)}.\label{eq:vk2}
		\end{align}
		
		It is now immediate from \eqref{eq:vk1} and \eqref{eq:vk2} that as $ k \to \infty,$ $$
		\sup_{z\in \mathbb{D}_s \cup \{1\}} k^\tau |z|^{k - \tau} |z - 1|^{N + 1 - \tau} \to 0.$$
		Thus we have	$\left( \phi_\la(te_1) - \phi_{ia}(te_1) \right)^{N + 1} T_0 = 0$, hence $(M_t - \phi_{ia}(te_1)))^{N + 1} T_0 = 0.$
		Finally from Lemma \ref{lemma3.1}, we obtain $ M_t T_0 = \phi_{ia}(te_1) T_0.$
		
	Next we shall prove the theorem when $T_k$'s are non-radial. For clarity and to allow this argument to extend to other multipliers in later sections, we denote the multiplier operator $M_t$ by $\Theta$ and consider it as a multiplier from 
	$S^a(\R^d)$ to itself with symbol $m(\la)=\phi_\la(te_1)$. With this notation, we have already established that $\Theta T_0= \phi_{ia}(te_1) T_0$, when $T_k$ are radial. 

 We now consider the case of a general family $\{T_k\}_{k \in \Z}$. We will show that if this sequence satisfies the hypothesis, then for each $y \in \R^d$, the sequence $ \{R \ell_yT_k\}$  also satisfies the same hypothesis. Given that $\Theta$ commutes with radialization and translations, the hypothesis $ \Theta T_k = AT_{k+1}$ immediately yields $ \Theta R(\ell_{y} T_{k}) = A R(\ell_{y} T_{k+1}).$ It remains to show that for the seminorm $\gm$ of $S^a(\R^d)$ as seen in the hypothesis of the theorem and $\psi \in \sar(\R^d)$, the following inequality holds $|\langle R( \ell_{y}T_k), \psi \rangle | \leq  M \gamma(\psi) $. For that, consider any $\psi \in \sar(\R^d)$, we have \begin{align*}
	\gm(\ell_{y}\psi) &= \sup_{x \in \R^d} (1+|x|)^m e^{a|x|} D^{\ap} \psi(x-y) \\ &\leq \sup_{z \in \R^d} (1+|y|+|z|)^m e^{a(|y| + |z|)} D^{\ap} \psi(z)\\ & = (1+|y|)^m e^{a|y|} \sup_ {z \in \R^d} (1+ |z|)^m e^{a|z|} D^\ap \psi(z) \\ &\leq C_y \gm (\psi)
\end{align*}
Using the fact that $|\langle T_k, \psi \rangle| \leq M \gm(\psi) $ for any $ \psi \in S^a(\R^d)$, it follows that for any $ \psi_1 \in \sar(\R^d)$,
$$ |\langle R(\ell_{y}T_k), \psi_1 \rangle| = | \langle \ell_{y}T_k, \psi_1\rangle| = | \langle T_k, \ell_{-y} \psi_1 \rangle| \leq M \gm (\ell_{-y} \psi_1) \leq C_{y^{-1}} M \gm(\psi_1) $$ 
Combining the result proved for radial distributions with the properties of the multiplier $\Theta$, we deduce that 
$$ \Theta R(\ell_{y}(T_{0})) = m(ia) R(\ell_{y}T_0) , \quad{for~all~} y \in \R^d $$
Recalling that $\Theta$ commutes with both translations and radialization, we obtain
$R(\ell_{y}(\Theta T_0)) = R \ell_{y} (m(ia)T_0)$, for all $y \in \R^d$. From the proof of Proposition \ref {theorem2.15}, recall the fact that for any $T\in S^a(\R^d)'$, if $R(\ell_xT)=0$ for all $x \in \R^d$, then $T$ equals zero as a distribution.
Hence we proved $ \Theta T_0 = m(ia) T_0 $. Finally by One Radius Theorem $\D T_0 = -a^2 T_0$.
	\end{proof}
	
Proof of Part (b).
	We have for $\phi \in H_e(\Omega_a)$\\
	\begin{equation}
		|\langle \mathcal{H}{T_0}, \phi \rangle| \leq M' \mu \bigg[\left(\frac{\phi_{\la}(te_1)}{A}\right)^k \phi \bigg]
	\end{equation}
	 The hypothesis and Proposition \ref{Prop2.2} together implies   $ |\phi_{\la}(te_1)| \leq \phi_{ia}(te_1) < |A|$ for all $\la \in \Omega_a$. Therefore, $\mu \bigg[\left(\frac{\phi_{\la}(te_1)}{A}\right)^k \phi \bigg] \to 0,$ as $k \to \infty$, which implies that $ | \langle \mathcal{H} T_0, \phi \rangle| =0,$ for every $\phi \in H_e(\Omega_a)$. Hence $T_0=0.$
	
	Proof of Part (c).
	For $|A| < \phi_{ia}(te_1)$, Lemma \ref{lemma3.2} guarantees the existence of infinitely many $\la$'s in $\Omega_{a}$ such that $|\phi_\la(te_1)| = |A|$. Let $\la_1$ and $\la_2$ be two such distinct $\la$s with $\la_{1} \neq \pm \la_{2}$. Let $\phi_{\la_1}(te_1)=A e^{i \theta_1} $ and $\phi_{\la_2}(te_1)=Ae^{i \theta_2}$. 	Setting a sequence of distributions in the form $T_k = e^{ik \theta_1} \phi_{\la_1} + e^{ik \theta_2} \phi_{\la_2}$, where $k \in \Z$, one can easily verify that the sequence $\{T_k\}_{k \in \Z}$ satisfies the hypothesis of the theorem, but $T_0$ fails to be an eigendistribution of $\D$.  
	
	If $A\neq 0 $, on choosing $\la_{1}$ and $\la_{2}$ such that $|\phi_{\la_1}(te_1)|=|A|=|\phi_{\la_2}(te_1)|$, but $\phi_{\la_1}(te_1) \neq \phi_{\la_2}(te_1)$ and substituting in the above example would help us obtain the required counter-example in this case. Again Lemma \ref{lemma3.2} ensures the existence of such $\la$s.
	\subsubsection{Proof of Theorem \ref{theorem1.5}}
	 As proved in \cite{BP}, we shall use the fact that the functions $f_k$ on $\R^d$ satisfying the property that  $ |f_k(x)| \leq M e^{a |x|} $ for a constant $M>0$ and for every $x \in \R^d$ are themselves exponential tempered distributions of type $a$ satisfying the hypothesis of Theorem \ref{theorem3.3}. Thus Theorem \ref{theorem1.5} is an immediate consequence of Theorem \ref{theorem3.3}.

	Analogously, we shall formulate an equivalent version of  Theorem \ref{theorem3.3} for the volume average $B_t$ as follows.
		\begin{theorem}\label{theorem3.4}
		For a fixed $t>0$ and any $a>0$, let $\{T_k\}$ be a  doubly infinite sequence of exponential tempered distributions of type $a$ on $\R^d$ satisfying for all $k \in \Z$
		\begin{enumerate}
			\item $B_t T_k = A T_{k+1}$ for some $A \in \C$ and 
			\item $|\langle T_k, \psi \rangle | \leq  M \gamma(\psi)$ for a fixed seminorm $\gamma$ of $S^a(\R^d)$ and a constant $M>0$. 	\end{enumerate}
		Then the following assertion holds. 
		\begin{enumerate}[(a)]
			\item If $|A| = \psi_{ia}(te_1)$, then $\D T_0 = -a^2 T_0$.
			\item If $|A| > \psi_{ia}(te_1)$, then $T_0=0$
			\item If $|A| <  \psi_{ia}(te_1)$, then $T_0$ may not be an eigendistribution of $\D$. If $A$ is also assumed to be non-zero, then $T_0$ may not be an eigendistribution of $B_t$. 
		\end{enumerate}
		
	\end{theorem}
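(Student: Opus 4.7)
The plan is to mirror the proof of Theorem~\ref{theorem3.3} line-by-line, with the symbol $\phi_\la(te_1)$ of the spherical mean $M_t$ replaced throughout by the symbol $\psi_\la(te_1)$ of the ball average $B_t$. All the structural ingredients needed for such a transcription have already been established: Proposition~\ref{prop 2.7} provides the uniform decay $|\psi_\la(te_1)| \to 0$ on $\Omega_a$ as $|\la| \to \infty$; the proposition following Definition~\ref{defn 2.5} gives the strict dominance $|\psi_\la(te_1)| < \psi_{ia}(te_1)$ for $\la \in \Omega_a \setminus \{\pm ia\}$; Proposition~\ref{prop 2.8} yields $\psi_{\la,1}(te_1) \neq 0$ for nonzero $\la \in i\R$, guaranteeing simple zeros of $\psi_\la(te_1) - \psi_{ia}(te_1)$ at $\pm ia$; and the One Radius Theorem has already been stated for $B_t$ in the preceding proposition. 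The general multiplier formulation at the end of the proof of Theorem~\ref{theorem3.3} is written precisely so that the non-radial reduction passes through for any multiplier commuting with translations and radialization, which covers $B_t$.

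For part~(a), I would first assume $T_k$ radial and write $B_t^k T_{-k} = A^k T_0$, hence $A^k \mathcal{H} T_0 = \psi_\la(te_1)^k \mathcal{H} T_{-k}$. Testing against $\phi \in H_e(\Omega_a)$ and transferring via $\mathcal{H}^{-1}$, the hypothesis $|\langle T_{-k}, \cdot\rangle| \leq M\gm(\cdot)$ produces the bound
\[
\bigl|\langle (\psi_\la(te_1) - \psi_{ia}(te_1))^{N+1}\mathcal{H}T_0, \phi\rangle\bigr| \leq M'\,\mu\!\left[\left(\tfrac{\psi_\la(te_1)}{A}\right)^{k}(\psi_\la(te_1) - \psi_{ia}(te_1))^{N+1}\phi\right],
\]
for an appropriate seminorm $\mu$ involving a derivative of even order $\tau$. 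Choosing $N = 17\tau + 7$, the Leibniz expansion reduces matters to the uniform decay of $k^\tau |z|^{k-\tau}|z-1|^{N+1-\tau}$ on $\mathbb{D} \cup \{1\}$, after noting that the image of a compact connected neighborhood of $ia$ under $\la \mapsto \psi_\la(te_1)/\psi_{ia}(te_1)$ lies in some $\mathbb{D}_s \cup \{1\}$. The disc-geometry estimate using the points $p_k, q_k$ and the sliver $V_k$ carries over verbatim. This yields $(B_t - \psi_{ia}(te_1))^{N+1} T_0 = 0$, and Lemma~\ref{lemma3.1} (applied with $\Theta = B_t$ and $B = \psi_{ia}(te_1)$) then upgrades this to $B_t T_0 = \psi_{ia}(te_1) T_0$. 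Invoking the One Radius Theorem for $B_t$ concludes the radial case, and the non-radial case is handled by the radialization-and-translation argument, exactly as in the proof of Theorem~\ref{theorem3.3}.

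Part~(b) is straightforward: the strict dominance $|\psi_\la(te_1)| \leq \psi_{ia}(te_1) < |A|$ forces $\mu[(\psi_\la(te_1)/A)^k \phi] \to 0$ as $k \to \infty$, so $\langle \mathcal{H} T_0, \phi\rangle = 0$ for every $\phi \in H_e(\Omega_a)$, giving $T_0 = 0$. Part~(c) requires an analog of Lemma~\ref{lemma3.2} for $\psi$, asserting the existence of distinct $\la_1, \la_2 \in \Omega_a$ with $|\psi_{\la_1}(te_1)| = |A| = |\psi_{\la_2}(te_1)|$ and $\psi_{\la_1}(te_1) \neq \psi_{\la_2}(te_1)$; its proof is identical to that of Lemma~\ref{lemma3.2}, using the uniform decay (Proposition~\ref{prop 2.7}) together with the strict monotonicity of $y \mapsto \psi_{iy}(te_1)$ that follows from Proposition~\ref{prop 2.8} and the maximum modulus principle. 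With such $\la_1, \la_2$ in hand and phases $\theta_1, \theta_2$ determined by $\psi_{\la_j}(te_1) = A e^{i\theta_j}$, the sequence $T_k = e^{ik\theta_1}\phi_{\la_1} + e^{ik\theta_2}\phi_{\la_2}$ satisfies the hypotheses but is not an eigendistribution of $\Delta$, nor of $B_t$ when $A \neq 0$.

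The main obstacle I anticipate is verifying that the technical disc-geometry estimate actually transfers — specifically, that the image of a small compact neighborhood of $\pm ia$ under $\la \mapsto \psi_\la(te_1)/\psi_{ia}(te_1)$ really does land inside $\mathbb{D}_s \cup \{1\}$ for some $s < 1$. This requires both the strict dominance $|\psi_\la(te_1)/\psi_{ia}(te_1)| < 1$ off $\{\pm ia\}$ and the simplicity of the root at $\pm ia$ (so that the approach to $1$ is non-degenerate); both are supplied by the propositions cited above. Beyond this, the proof is a faithful transcription of the argument for Theorem~\ref{theorem3.3}, which is presumably why the paper omits it.
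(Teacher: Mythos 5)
Your proposal is correct and follows exactly the same route the paper takes: the paper explicitly states that Theorem~\ref{theorem3.4} "follows the same line of argument as in the proof of Theorem~\ref{theorem3.3} with appropriate change in symbols," which is precisely the transcription you carry out. You correctly identify all the $\psi_\lambda$-analogues needed to make the transcription watertight — the strict dominance from the unnumbered proposition after Definition~\ref{defn 2.5}, the uniform decay from Proposition~\ref{prop 2.7}, the non-vanishing derivative from Proposition~\ref{prop 2.8}, the multiplier-agnostic Lemma~\ref{lemma3.1}, the One Radius Theorem for $B_t$, and the $\psi$-analogue of Lemma~\ref{lemma3.2} for part~(c) — and your worry about the disc-geometry estimate is resolved exactly as you anticipate.
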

	
	Proof of the above formulated theorem follows the same line of argument as in the proof of Theorem \ref{theorem3.3} with appropriate change in symbols. On invoking the same logic we have mentioned in the proof of Theorem \ref{theorem1.5}, we obtain the following corollary.
	\begin{corollary}\label{corollary3.5}
		For a fixed $t>0$ and any $a>0$, let $\{f_k\}_{k \in \Z}$ be a doubly infinite sequence of functions on $\R^d$ satisfying for all $k \in \Z$
		\begin{enumerate}
			\item $B_t f_k = A f_{k+1}$ for some $A \in \C$ and 
			\item 	$ |f_k(x)| \leq M e^{a |x|} $ for a constant $M>0$ and for every $x \in \R^d$. \end{enumerate} Then the following assertion holds.
		\begin{enumerate}[(a)]
			\item If $|A| = \psi_{ia}(te_1)$, then $\D f_0 = -a^2 f_0$.
			\item If $|A| > \psi_{ia}(te_1)$, then $f_0=0.$
			\item If $|A| < \psi_{ia}(te_1)$, then $f_0$ may not be an eigenfunction of $\D$. 
		\end{enumerate}
		
	\end{corollary}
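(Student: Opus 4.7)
The plan is to reduce the statement to Theorem~\ref{theorem3.4} by realizing each $f_k$ as an exponential tempered distribution of type $a$ and checking that the hypotheses of Theorem~\ref{theorem3.4} are satisfied. This parallels exactly the deduction of Theorem~\ref{theorem1.5} from Theorem~\ref{theorem3.3}, which the authors already carry out.

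First, for each $k\in\Z$, define $T_k\in S^a(\R^d)'$ by the pairing
\[
\langle T_k,\psi\rangle=\int_{\R^d}f_k(x)\psi(x)\,dx,\qquad \psi\in S^a(\R^d).
\]
Using the growth bound $|f_k(x)|\le Me^{a|x|}$, one sees that for $m>d$,
\[
|\langle T_k,\psi\rangle|\le M\int_{\R^d}e^{a|x|}|\psi(x)|\,dx\le M\gamma_{m,0}(\psi)\int_{\R^d}(1+|x|)^{-m}\,dx=M'\gamma_{m,0}(\psi),
\]
so the $T_k$ satisfy condition (2) of Theorem~\ref{theorem3.4} with a uniform constant $M'$ and the fixed seminorm $\gamma_{m,0}$.

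Next I would verify that the pointwise relation $B_t f_k=Af_{k+1}$ lifts to the distributional identity $B_t T_k=AT_{k+1}$. Since $B_t\psi\in S^a(\R^d)$ whenever $\psi\in S^a(\R^d)$ (by the multiplier property established in Subsection~\ref{ss:multipliers}), and since $B_t$ is self-adjoint up to the sign reflection $\xi\mapsto -\xi$ of its radial symbol $\psi_t(\xi)$ (here the symbol is even, so $B_t^*=B_t$), Fubini's theorem gives
\[
\langle B_t T_k,\psi\rangle=\langle T_k,B_t\psi\rangle=\int_{\R^d}f_k(x)(B_t\psi)(x)\,dx=\int_{\R^d}(B_tf_k)(x)\psi(x)\,dx=A\langle T_{k+1},\psi\rangle,
\]
so condition (1) is met.

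Having verified both hypotheses, Theorem~\ref{theorem3.4} applies and yields the three cases: in (a), $\Delta T_0=-a^2T_0$ distributionally, which, since $f_0$ is locally integrable, by elliptic regularity upgrades to the pointwise eigenfunction equation $\Delta f_0=-a^2 f_0$; in (b), $T_0=0$ and hence $f_0=0$ a.e.; in (c), the counterexamples constructed in the proof of Theorem~\ref{theorem3.4} (linear combinations of $\phi_{\lambda_1}$ and $\phi_{\lambda_2}$ with appropriate phases) are actually smooth functions satisfying the exponential growth bound, and therefore serve directly as counterexamples at the function level. The only mild point to be careful about is the elliptic-regularity step in (a) and the verification that the $\phi_{\lambda_j}$ in (c) satisfy $|\phi_{\lambda_j}(x)|\le Ce^{a|x|}$, both of which follow from the standard properties of the spherical functions recalled in Subsection~\ref{subsection2.1}.
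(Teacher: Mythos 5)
Your proposal is correct and follows essentially the same route the paper takes: the paper's proof of Corollary~\ref{corollary3.5} is literally a one-line invocation of the same reduction used to derive Theorem~\ref{theorem1.5} from Theorem~\ref{theorem3.3} (viewing the $f_k$ as exponential tempered distributions of type $a$ and applying Theorem~\ref{theorem3.4}), which is exactly what you carry out in detail. Your extra care about the seminorm bound, the self-adjointness of $B_t$ under the duality pairing (the kernel $\frac{1}{|B(0,t)|}\chi_{B(0,t)}$ is even), and the elliptic-regularity upgrade from the distributional to the pointwise eigenequation in part (a) are all sound and simply make explicit what the paper leaves implicit.
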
 
	Finally we shall state an analogous result for the heat operator $e^{-t\D}$ as below.
	
		\begin{theorem}\label{theorem3.6}
		For a fixed $t>0$ and any $a>0$, let $\{T_k\}$ be a doubly infinite sequence of exponential tempered distributions of type $a$ on $\R^d$ satisfying for all $k \in \Z$
		\begin{enumerate}
			\item $e^{-t\D} T_k = A T_{k+1}$ for some $A \in \C$ and 
			\item $|\langle T_k, \psi \rangle | \leq  M \gamma(\psi)$ for a fixed seminorm $\gamma$ of $S^a(\R^d)$ and a constant $M>0$. 	\end{enumerate}
		Then the following conclusion holds.
		 \begin{enumerate}[(a)]
			\item If $|A| = e^{ta^2}$, then $\D T_0 = -a^2 T_0$.
			\item If $|A| > e^{ta^2}$, then $T_0=0.$
			\item If $0 \neq |A| <  e^{ta^2}$, then $T_0$ may not be an eigendistribution of $e^{-t \D}$. 
			\item If $A=0$, then each $T_k = 0$ for all integer k.
		\end{enumerate}
			\end{theorem}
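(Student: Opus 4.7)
The plan is to adapt the proof of Theorem \ref{theorem3.3} almost verbatim, replacing the symbol $\phi_\la(te_1)$ with $m(\la)=e^{-t\la^2}$ and invoking the heat-operator One Radius Theorem at the end. The reduction from non-radial to radial $T_k$ is identical to that in Theorem \ref{theorem3.3}: since $e^{-t\D}$ commutes with translations and radialization, $\{R\ell_y T_k\}$ satisfies the same hypotheses for every $y\in\R^d$, and the criterion that $R\ell_y T=0$ for all $y\in\R^d$ forces $T=0$ closes the extension. So I restrict to radial $T_k$ in what follows.

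Two analytic facts about the symbol $m(\la)=e^{-t\la^2}$ on $\Omega_a$ replace Propositions \ref{Prop2.2} and \ref{prop 2.3}. For $\la=x+iy$ with $|y|\leq a$ we have $|e^{-t\la^2}|=e^{-tx^2}e^{ty^2}\leq e^{ta^2}$ with equality \emph{only} at $\la=\pm ia$, and by Gaussian decay in $\re(\la)$, $|e^{-t\la^2}|\to 0$ uniformly on $\Omega_a$ as $|\la|\to\infty$. These are the only inputs required by the convergence machinery; the simple-zero property of $e^{-t\la^2}-e^{ta^2}$ at $\pm ia$, which is needed to feed Proposition \ref{prop2.9}, is already absorbed into the heat-operator One Radius Theorem stated just above Theorem 3.6.

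For Part (a), using the iterative identity $A^k\mathcal{H}T_0=e^{-tk\la^2}\mathcal{H}T_{-k}$ together with $|e^{-t\la^2}/A|\leq 1$ on $\Omega_a$, I imitate the main estimate of Theorem \ref{theorem3.3}: for every $\phi\in H_e(\Omega_a)$,
\begin{equation*}
|\langle(e^{-t\la^2}-e^{ta^2})^{N+1}\mathcal{H}T_0,\phi\rangle|\leq M'\mu\!\left[\bigl(e^{-t\la^2}/A\bigr)^k(e^{-t\la^2}-e^{ta^2})^{N+1}\phi\right]\to 0
\end{equation*}
as $k\to\infty$, via the same covering $\mathbb{D}\cup\{1\}\subset\bigcup_{0<s<1}(\mathbb{D}_s\cup\{1\})$ and the $V_k$-estimate depicted in Figure~1. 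This produces $(e^{-t\D}-e^{ta^2})^{N+1}T_0=0$; Lemma \ref{lemma3.1} applied with $B=e^{ta^2}$ (so that $|B|=|A|$) then yields $e^{-t\D}T_0=e^{ta^2}T_0$, and the heat-operator One Radius Theorem delivers $\D T_0=-a^2 T_0$. Part (b) is immediate from $|e^{-t\la^2}/A|\leq e^{ta^2}/|A|<1$ uniformly on $\Omega_a$, which makes $(e^{-t\la^2}/A)^k\to 0$ in every seminorm of $H_e(\Omega_a)$ and forces $\mathcal{H}T_0=0$. Part (d) is equally quick: $A=0$ gives $e^{-t\la^2}\mathcal{H}T_k=0$, and the non-vanishing of $e^{-t\la^2}$ on $\Omega_a$ yields $T_k=0$ for every $k\in\Z$.

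For Part (c), I would establish an analogue of Lemma \ref{lemma3.2}: since $y\mapsto e^{ty^2}$ is strictly increasing from $1$ to $e^{ta^2}$ on $[0,a]$ and $|e^{-t\la^2}|\to 0$ uniformly as $|\la|\to\infty$ in $\Omega_a$, every value $|A|\in(0,e^{ta^2})$ is attained by $|e^{-t\la^2}|$ on an uncountable subset of $\Omega_a$; I then pick distinct $\la_1,\la_2\in\Omega_a$ with $|e^{-t\la_i^2}|=|A|$ and $e^{-t\la_1^2}\neq e^{-t\la_2^2}$, write $e^{-t\la_i^2}=Ae^{i\theta_i}$, and set $T_k=e^{ik\theta_1}\phi_{\la_1}+e^{ik\theta_2}\phi_{\la_2}$; this sequence satisfies the hypotheses while $T_0$ is manifestly not an eigendistribution of $e^{-t\D}$. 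The main technical obstacle is locating a compact neighborhood $V$ of $\{\pm ia\}$ in $\Omega_a$ whose image under $\la\mapsto e^{-t\la^2}/e^{ta^2}$ lies inside $\mathbb{D}_s\cup\{1\}$ for some $0<s<1$, so that the $V_k$-estimate from Theorem \ref{theorem3.3} transfers; the Gaussian decay of the symbol actually makes the complementary estimate on $\Omega_a\setminus V$ noticeably easier than in the spherical-mean case.
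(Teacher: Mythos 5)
Your treatment of Parts (a), (b), and (c) follows the paper's strategy essentially exactly: the symbol $e^{-t\la^2}$ satisfies the three analytic facts you identify (the bound $|e^{-t\la^2}|\leq e^{ta^2}$ on $\Omega_a$ with equality only at $\pm ia$, the uniform decay as $|\la|\to\infty$ on $\Omega_a$, and the abundance of level sets for each value in $(0,e^{ta^2})$), these replace Propositions \ref{Prop2.2}, \ref{prop 2.3}, and Lemma \ref{lemma3.2}, and the paper indeed proves (a) and (b) by reference to the Theorem \ref{theorem3.3} argument and gives (c) via exactly the counterexample $T_k=e^{ik\theta_1}\phi_{\la_1}+e^{ik\theta_2}\phi_{\la_2}$ you propose.

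Part (d), however, contains a genuine gap. The inference ``$e^{-t\la^2}\mathcal{H}T_k=0$ and $e^{-t\la^2}$ never vanishes on $\Omega_a$, hence $\mathcal{H}T_k=0$'' is a division of a distribution by a non-vanishing symbol, and in $H_e(\Omega_a)'$ this is not automatic. To justify it one must show that $\{e^{-t\la^2}\phi : \phi\in H_e(\Omega_a)\}$ is dense in $H_e(\Omega_a)$; the obvious candidate preimage $e^{t\la^2}\psi$ grows Gaussianly in $\re(\la)$ and so leaves $H_e(\Omega_a)$, so density is not a formal consequence of non-vanishing and actually carries analytic content. This density is precisely what Proposition \ref{prop2.9} (the Beurling-type quasi-analytic ideal theorem adapted from Ben Natan--Weit) supplies, and it is what the paper's proof of (d) actually uses: after reducing to radial $T$, it rewrites $e^{-t\Delta}T=0$ as $\langle T,\psi*h_t\rangle=0$ for all $\psi\in\sar(\R^d)$ and applies Proposition \ref{prop2.9} with $g_0=h_t$, noting that $\mathcal{H}h_t(\la)=e^{-t\la^2}$ has no zeros in $\Omega_a$ and satisfies the required sub-exponential logarithmic growth condition, to conclude that $\{\psi*h_t\}$ is dense in $\sar(\R^d)$. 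As written, your Part (d) asserts the conclusion where the argument should be; you need to invoke Proposition \ref{prop2.9} (or an equivalent density/ideal theorem) at precisely that step.
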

		\begin{proof}
			We omit the proof of Part (a) and Part (b) as it requires to emulate the same arguments given in the corresponding proof of Theorem \ref{theorem3.3}.
			
			(c)  	Let $|A| \neq 0$. As  $|A| < e^{ta^2} ,$ it is possible to find an $a_0$ such that $0<a_0<a$ with  $e^{ta_0^{2}} \geq |A| $. Since $e^{-t\la^2}\to 0$ as $|\la|\to \infty$ for $\la \in \Omega_a$, for each fixed $\bt$ with $0<a_0\leq \bt<a$, we have uncountably many $\la \in \C$ with $|\im(\la)|=\bt$ and $|e^{-t\la^2}|=|A|$.

 Now the zeroes of a non-zero analytic function are isolated, there exists $\la_1, \la_2 \in \Omega_{a}$ such that $ e^{-t\la_{1} ^2} \neq e^{-t\la_{2} ^2}$ still $|e^{-t\la_{1} ^2}| = |e^{-t\la_{2} ^2}| = |A|$. Let $e^{-t\la_{1} ^2} = A e^{i\theta_1}$ and $e^{-t\la_{2} ^2} = A e^{i\theta_2}$. Now setting a sequence of distributions of the form $T_k = e^{ik\theta_1}  \phi_{\la_1} +  e^{ik\theta_2} \phi_{\la_2} $ for $k \in \Z$, we obtain a sequence $\{T_k\}$ satisfying the hypothesis of the theorem, but $T_0$ is not an eigendistribution of $e^{t\D}$.
			
			(d) We aim to show that if the heat semigroup acting on a tempered distribution 
			$T$ yields zero ($e^{-t \D} T=0$),
			 then the distribution T itself must be zero.
			 On the contrary, let us assume that $T \neq 0$. As in the proof of Theorem \ref{theorem2.15}, we may assume $T$ to be radial. Since $T \neq 0$, there exists a $\psi \in \sar(\R^d)$ such that $\langle T, \psi \rangle \neq 0$. However the condition $e^{-t \D} T=0$ implies that $ \langle T, \psi * h_t \rangle =0 $ for every $\psi \in \sar(\R^d)$. Now by proposition \ref{prop2.9}, $\{\psi * h_t \mid \psi \in \sar(\R^d)\}$ is dense in $\sar(\R^d)$ which implies that $\langle T,  \psi \rangle =0$ for all $\psi \in \sar(\R^d)$. Hence $T=0$.
		\end{proof}
	As a consequence of the above theorem we have the following corollary.	

	\begin{corollary}\label{corollary3.7}
		For a fixed $t>0$ and any $a>0$, let $\{f_k\}_{k \in \Z}$ be a doubly infinite sequence of functions on $\R^d$ satisfying for all $k \in \Z$
		\begin{enumerate}
			\item $e^{-t\D} f_k = A f_{k+1}$ for some $A \in \C$ and 
			\item 	$ |f_k(x)| \leq M e^{a |x|} $ for a constant $M>0$ and for every $x \in \R^d$. \end{enumerate} Then the following assertions holds.
		\begin{enumerate}[(a)]
			\item If $|A| = e^{ta^2}$, then $\D f_0 = -a^2 f_0$.
			\item If $|A| > e^{ta^2}$, then $f_0=0.$
			\item If $|A| < e^{ta^2}$, then $f_0$ may not be an eigenfunction of $\D$. 
		\end{enumerate}
		
	\end{corollary}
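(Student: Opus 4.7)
The plan is to deduce Corollary \ref{corollary3.7} directly from Theorem \ref{theorem3.6} by showing that each function $f_k$ satisfying $|f_k(x)|\leq Me^{a|x|}$ canonically induces an exponential tempered distribution $T_k\in S^a(\R^d)'$ of type $a$, to which the theorem applies. Specifically, I would define
\[
\langle T_k,\psi\rangle=\int_{\R^d}f_k(x)\,\psi(x)\,dx,\qquad \psi\in S^a(\R^d).
\]
The integral converges absolutely because $|f_k(x)\psi(x)|\leq Me^{a|x|}|\psi(x)|$ and $\psi$ decays faster than $(1+|x|)^{-(d+1)}e^{-a|x|}$; a routine estimate of the form $|\langle T_k,\psi\rangle|\leq C\,M\,\gamma_{d+1,0}(\psi)$ then gives the uniform seminorm bound demanded by hypothesis (2) of Theorem \ref{theorem3.6}.

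Next, I would transfer the recurrence $e^{-t\D}f_k=Af_{k+1}$ to the distributional level. Since $h_t$ decays faster than any exponential, both $\psi*h_t$ and $f_k*h_t$ are well-defined, with $\psi*h_t\in S^a(\R^d)$ (this was already observed in the paper when showing $e^{-t\D}$ is a multiplier on $S^a(\R^d)$). By Fubini,
\[
\langle e^{-t\D}T_k,\psi\rangle=\langle T_k,e^{-t\D}\psi\rangle=\int f_k(x)(h_t*\psi)(x)\,dx=\int (f_k*h_t)(x)\psi(x)\,dx=A\langle T_{k+1},\psi\rangle,
\]
so $e^{-t\D}T_k=AT_{k+1}$ in $S^a(\R^d)'$. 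Parts (a) and (b) now follow from Theorem \ref{theorem3.6}: the conclusions $\D T_0=-a^2T_0$ and $T_0=0$ transfer back to $f_0$ because $T_0$ is the regular distribution induced by $f_0$, and the distributional Laplacian of a locally integrable function that happens to be smooth coincides with the classical Laplacian (in the eigenfunction case, elliptic regularity ensures $f_0$ is smooth).

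For part (c), I would reuse the counterexample constructed in the proof of Theorem \ref{theorem3.6}(c): since $|A|<e^{ta^2}$, the analysis $|e^{-t\la^2}|=e^{t(\im(\la)^2-\re(\la)^2)}$ together with the constraint $\la\in\Omega_a$ produces a one-parameter family of $\la\in\Omega_a$ with $|e^{-t\la^2}|=|A|$, and isolation of zeroes lets us pick $\la_1,\la_2\in\Omega_a$ with $\la_1\neq\pm\la_2$ and $e^{-t\la_1^2}\neq e^{-t\la_2^2}$. Writing $e^{-t\la_j^2}=Ae^{i\theta_j}$ and setting
\[
f_k(x)=e^{ik\theta_1}\phi_{\la_1}(x)+e^{ik\theta_2}\phi_{\la_2}(x),
\]
property (2) of the Euclidean spherical functions gives $|f_k(x)|\leq M e^{a|x|}$, the identity $e^{-t\D}\phi_\la=e^{-t\la^2}\phi_\la$ yields $e^{-t\D}f_k=Af_{k+1}$, yet $f_0$ fails to be an eigenfunction of $\D$ since $\phi_{\la_1},\phi_{\la_2}$ belong to distinct eigenspaces.

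The only obstacle of substance is the bookkeeping needed to ensure that the classical convolution $f_k*h_t$ and the distributional action of $e^{-t\D}$ on $T_k$ coincide; once this compatibility is checked, the corollary is a direct transcription of Theorem \ref{theorem3.6}, and part (c) is completed by the explicit construction above.
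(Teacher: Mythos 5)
Your proposal is correct and mirrors the paper's own route: the paper derives Corollary~\ref{corollary3.7} (as it derives Theorem~\ref{theorem1.5} from Theorem~\ref{theorem3.3}) by observing that the pointwise bound $|f_k(x)|\leq Me^{a|x|}$ makes each $f_k$ a regular element of $S^a(\R^d)'$ satisfying hypothesis (2) of Theorem~\ref{theorem3.6}, and then invoking that theorem directly; your extra bookkeeping (the seminorm estimate via $\gamma_{d+1,0}$, the Fubini argument identifying $e^{-t\D}T_k$ with the regular distribution $f_k*h_t$ using the evenness of $h_t$, the elliptic-regularity step to upgrade the distributional eigenvalue equation to the classical one, and the reuse of the $\la_1,\la_2$ counterexample for part~(c)) simply fills in the details the paper leaves implicit.
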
 
	
	\section{Multipliers on $\mathcal{S}(\R^d)$} \label{section4} 
	Building on our detailed analysis on eigenfunctions of three specific multipliers in the space $S^a(\R^d)$, we will extend the previous results to all possible multipliers on Schwartz space in $\R^d$. The one-dimensional nature of the spectrum $[0, \infty)$ for the Laplacian in $\mathcal{S}(\R^d)$, allows us to formulate more general results corresponding to any arbitrary multiplier $\Theta$. We will be treating multipliers having real valued symbols and complex valued symbols separately. Let us begin with a theorem that determines support of $\widehat{T_0}$ where  $\{T_k\}$ is a sequence of tempered distributions satisfying the Strichartz criterion.
	\begin{theorem}\label{theorem4.1}
	Let $\Theta$ be a multiplier on $\mathcal{S}(\mathbb{R}^d)$ with symbol $m(\xi) \in \mathcal{C}^{\infty}(\R^d)$.
	Suppose $\{T_k\}_{k \in \Z}$ be a doubly infinite sequence of tempered distributions on $\mathbb{R}^d$ such that for all $k \in \mathbb{Z}$, $|\langle T_k, \psi \rangle| \leq M \gamma(\psi)$ for a fixed seminorm $\gamma$ on $\mathcal{S}(\mathbb{R}^d)$ and a constant $M>0$.
	Let $A \in \mathbb{C}$. Then the following conclusions hold.
	\begin{enumerate}[(i)]
		\item If for all $k \in \mathbb{Z}^+$, $\Theta T_k = A T_{k+1}$, then $\mbox{Supp}\; \widehat{T}_0 \subseteq \{ \xi \in \mathbb{R}^d \mid |m(\xi)| \leq |A| \}$.
		\item If for all $k \in \mathbb{Z}^-$, $\Theta T_{k-1} = A T_k$, then  $Supp\; \widehat{T}_0 \subseteq \{ \xi \in \mathbb{R}^d \mid |m(\xi)| \geq |A| \}$.
		\item If for all $k \in \mathbb{Z}$, $\Theta T_k = A T_{k+1}$, then $ Supp\; \widehat{T}_0 \subseteq \{ \xi \in \mathbb{R}^d \mid |m(\xi)| = |A| \}$.
	\end{enumerate}
\end{theorem}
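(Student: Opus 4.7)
The plan is to pass to the Fourier side, where the recurrence $\Theta T_k = A T_{k+1}$ becomes the multiplicative relation $m\,\widehat{T}_k = A\,\widehat{T}_{k+1}$ on $\mathcal{S}'(\R^d)$, and then exploit the uniform seminorm bound on $\{T_k\}$ to show that $\widehat{T}_0$ annihilates every test function supported outside the prescribed region.

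For part (i), iterating the Fourier-side identity yields $m^k \widehat{T}_0 = A^k \widehat{T}_k$ for all $k \geq 0$. Fix $\phi \in C_c^\infty(\R^d)$ with $\mathrm{supp}\,\phi \subset \{|m| > |A|\}$ (the case $A=0$ is immediate since then $m\widehat{T}_0=0$). By compactness and continuity of $m$, there is $\delta > 0$ with $|m(\xi)| \geq |A|+\delta$ on $\mathrm{supp}\,\phi$, so $\phi/m^k$ is a legitimate Schwartz function, and
\begin{equation*}
\langle \widehat{T}_0,\phi\rangle = A^k \langle \widehat{T}_k, \phi/m^k\rangle = A^k \langle T_k, \widehat{\phi/m^k}\,\rangle.
\end{equation*}
Invoking the hypothesis yields $|\langle \widehat{T}_0,\phi\rangle| \leq M |A|^k \gamma(\widehat{\phi/m^k})$. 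The analytic heart of the argument is the estimate $\gamma(\widehat{\phi/m^k}) \leq P(k)(|A|+\delta)^{-k}$ for some polynomial $P$, which I would obtain by applying Faà di Bruno / Leibniz to derivatives of $1/m^k$ on $\mathrm{supp}\,\phi$ (producing at most polynomial-in-$k$ prefactors times $(|A|+\delta)^{-k}$), then using the classical integration-by-parts trick to convert $L^1$-bounds on $\xi$-derivatives of a compactly supported function into weighted $L^\infty$-bounds on its Fourier transform. The geometric ratio $(|A|/(|A|+\delta))^k \to 0$ then overwhelms $P(k)$, forcing $\langle \widehat{T}_0,\phi\rangle = 0$.

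Part (ii) is the dual argument: the backward recurrence gives $m^k \widehat{T}_{-k} = A^k \widehat{T}_0$, and for $\phi \in C_c^\infty$ supported in $\{|m| < |A|\}$ with $|m| \leq |A|-\delta$ there, the identity
\begin{equation*}
A^k \langle \widehat{T}_0,\phi\rangle = \langle \widehat{T}_{-k}, m^k\phi\rangle = \langle T_{-k}, \widehat{m^k\phi}\,\rangle
\end{equation*}
combined with the analogous estimate $\gamma(\widehat{m^k\phi}) \leq Q(k)(|A|-\delta)^k$ produces the decay $((|A|-\delta)/|A|)^k \to 0$. Part (iii) is then immediate, as its support hypothesis yields both $\mathrm{supp}\,\widehat{T}_0 \subseteq \{|m| \leq |A|\}$ and $\mathrm{supp}\,\widehat{T}_0 \subseteq \{|m| \geq |A|\}$.

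The principal technical obstacle is the polynomial-times-geometric bound on the seminorms $\gamma(\widehat{\phi/m^k})$ and $\gamma(\widehat{m^k\phi})$: one must verify that the chain-rule expansion of $m^{\pm k}$ produces only polynomial-in-$k$ prefactors multiplying the expected geometric growth/decay, and that every standard Schwartz seminorm of the Fourier transform inherits the same type of bound via integration by parts. Once this polynomial-versus-geometric competition is settled in favor of the geometric factor, the conclusions fall out uniformly.
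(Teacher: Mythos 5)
Your proof is correct and follows essentially the same route as the paper's: pass to the Fourier side to get $m^k\widehat{T}_0 = A^k\widehat{T}_k$ (respectively $m^k\widehat{T}_{-k}=A^k\widehat{T}_0$), test against $\phi\in C_c^\infty$ supported where $|m|$ is strictly greater (resp.\ strictly less) than $|A|$, transfer the uniform seminorm bound on the $T_k$ via the continuity of the Fourier transform on $\mathcal{S}(\R^d)$, and win the resulting polynomial-versus-geometric competition. If anything, your explicit accounting for the polynomial-in-$k$ prefactors coming from the Leibniz/Faà di Bruno expansion of $D^\tau(m^{\pm k})$ is a shade more careful than the estimate written in the paper, which records only the dominant geometric factor.
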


	\begin{proof}
	Let $\Omega = \{ \xi \in \mathbb{R}^d \mid |m(\xi)| \leq |A| \}$ and choose $\beta \notin {\Omega}$. Then it is possible to find a positive constant $c < 1$ and a compact neighborhood $V$ of $\beta$ such that
	$|A| < c |m(\xi)|$ for all $\xi \in V.$
Let $\phi \in \mathcal{S}(\R^d)$ such that Supp $\phi \subset V$. It suffices to prove that $\langle \widehat{T}_0, \phi \rangle = 0$.
	
 Since $\phi$ is compactly supported and $m \in C^{\infty}(\R^d)$, we have
	$$\phi(\xi) A^k / m(\xi)^k \in \mathcal{S}(\R^d).$$
Suppose that for $\psi \in \mathcal{S}(\R^d)$ such that $\ds {\psi}(x) = A^k \Big(\frac{\phi}{ m(\xi)^k}\hat{\Big)}(x)$.
	
 Using the hypothesis, we obtain for all $k \in \Z^+$, $\Theta^k T_{0} = A^k T_k$, equivalently $m(\xi)^k \widehat{T}_0 = A^k \widehat{T}_k$.
Therefore \begin{align*}
	|\langle \widehat{T}_0, \phi \rangle| &= \left| \left\langle \widehat{T}_k, \frac{A^k}{m(\xi)^k} \phi \right\rangle \right| = |\langle T_k, \psi \rangle|\\
	& \leq M \gm(\psi) \leq M' \mu_{\ap,\bt} \left[ \left( \frac{A}{m(\xi)} \right)^k \phi \right]
\end{align*}
	
	where $\mu_{\alpha, \beta}$ is a seminorm given by
	\begin{align*}
		\mu_{\alpha, \beta} \left[ \left( \frac{A}{m(\xi)} \right)^k \phi(\xi) \right] &= \sup_{\xi \in V} \left| \xi^\ap D^\bt \phi(\xi) \left(\frac{A}{m(\xi)}\right)^k \right| \\
		&= \sup_{\xi \in V} \left | \xi^\alpha \sum_{\tau \leq \beta} \binom{\beta}{\tau} D^\tau \phi(\xi) D^{\beta - \tau} \left(\frac{A}{m(\xi)}\right)^k \right| 
	\end{align*}
	Since $\phi \in C_c^\infty(\mathbb{R}^d)$, it is easy to see that $\xi^\alpha D^\tau \phi(\xi)$ is bounded in V for every multi-indices $\alpha, \tau $.
	We note that $$ \ds \sum_{\tau \leq \beta}  D^{\beta - \tau} \left(\frac{A}{m(\xi)}\right)^k \leq C \sum_{\tau \leq \beta}  \left(\frac{A}{m(\xi)}\right)^{k - |(\beta - \tau)|}.$$
	Since $ \ds \left|\frac{A}{m(\xi)}\right| < c$, $$ \ds \sup_{\xi \in V} \left|\sum_{\tau \leq \beta} \xi^\alpha \left(\frac{A}{m(\xi)}\right)^{k - |(\beta - \tau)|}\right| \to 0$$ as $k \to \infty$ for all multi-indices $\alpha$ and $ \beta$.
	Hence $ \ds\mu_{\alpha, \beta} \left[ \left(\frac{A}{m(\xi)}\right)^k \phi(\xi) \right] \to 0$, as $k \to \infty$.
	Thus $\langle \widehat{T_0}, \phi \rangle = 0$ as $k \to \infty$ and hence $\beta \notin \text{Supp} \hat{T_0}$.
	Therefore $\text{Supp} \widehat{T_0} \subset \Omega$.
	This completes the proof of (i).
	Now adapting a similar argument with negative integers and tending $k \to -\infty$ yields (ii). And (iii) follows trivially from (i) and (ii).
\end{proof}	
Following is one of the central result in this section where we establishes a Strichartz's type criterion for an arbitrary multiplier $\Theta$ having real valued symbols on $\mathcal{S}(\R^d)$.  
\begin{theorem}\label{theorem4.2}
	Let $\Theta: \mathcal{S}(\R^d) \rightarrow \mathcal{S}(\R^d)$ be a multiplier with real valued symbol $m(\xi) \in C^\infty (\R^d)$. Let $\left\{T_k\right\}$ be a doubly infinite sequence of tempered distributions on $\R^d$. Suppose that for all $ k \in \Z$, $\Theta T_k=A T_{k+1}$ for a non-zero constant $A \in \mathbb{C}$ and $\left|\langle T_k, \psi \rangle \right| \leq M \gamma(\psi)$ for a fixed seminorm $\gamma(\psi)$ of $\mathcal{S}(\R^d)$ and a constant $M >0$. Let $m(\R^d)=\left\{m(\xi)| \xi \in \R^d\right\}$.
	\begin{enumerate}[(a)]
		\item  If $|A| \in m(\R^d)$, but $-|A| \notin m(\R^d)$, then $ \Theta T_0=|A| T_0$.
		\item If $-|A| \in m(\R^d)$, but $|A| \notin m(\R^d)$, then $ \Theta T_0=-|A| T_0$.
		\item   If both $|A|$, $-|A| \in m(\R^d)$, then $T_0$ can be uniquely written as $T_0=T_{+}+T_{-}$, where $T_{+}, T_{-} \in S(\R^d)^{\prime}$ satisfying $\Theta T_{+}=|A| T_{+}$ and $ \Theta T_{-}=-|A| T_{-}$.
		\item 	If neither $|A|$ nor $-|A| $ is in $ \R^d$, then $T_0=0$.
	\end{enumerate}
	\end{theorem}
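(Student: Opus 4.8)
The plan is to combine the support localization of Theorem~\ref{theorem4.1} with a bound on the order of $\widehat{T_0}$ and the spectral reduction furnished by Lemma~\ref{lemma3.1}. First I would record two preliminary facts. Because the hypothesis $|\langle T_k,\psi\rangle|\le M\gamma(\psi)$ involves a single fixed seminorm $\gamma$ of $\mathcal{S}(\R^d)$, carrying it through the Fourier transform via $\langle\widehat{T_k},\phi\rangle=\langle T_k,\widehat{\phi}\,\rangle$ and the continuity of $\phi\mapsto\widehat{\phi}$ on $\mathcal{S}(\R^d)$ shows that every $\widehat{T_k}$, and in particular $\widehat{T_0}$, is a tempered distribution of finite order $\le L$, with $L$ depending only on $\gamma$. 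Moreover, since the recurrence $\Theta T_k=AT_{k+1}$ is assumed for all $k\in\Z$, part~(iii) of Theorem~\ref{theorem4.1} applies and yields
\[
\operatorname{Supp}\widehat{T_0}\ \subseteq\ Z:=\{\xi\in\R^d:|m(\xi)|=|A|\}=\{m=|A|\}\cup\{m=-|A|\},
\]
the last equality holding because $m$ is real valued. In case~(d) both level sets are empty, so $Z=\emptyset$, $\widehat{T_0}=0$, and $T_0=0$; hence from now on $Z\ne\emptyset$.

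The next step is to upgrade the support statement to a polynomial identity. Set $g(\xi)=m(\xi)^2-|A|^2\in C^\infty(\R^d)$, whose zero set is exactly $Z$. For any $\phi\in\mathcal{S}(\R^d)$ and any multi-index $\alpha$ with $|\alpha|\le L$, the Leibniz expansion of $\partial^\alpha(g^{L+1}\phi)$ consists of terms each of which still carries at least one undifferentiated factor $g$ (there are $L+1$ factors $g$ and at most $L$ derivatives to distribute among them); hence $\partial^\alpha(g^{L+1}\phi)$ vanishes on $Z$. Since a tempered distribution of order $\le L$ supported in the closed set $Z$ annihilates every smooth function all of whose derivatives up to order $L$ vanish on $Z$, one concludes $g^{L+1}\widehat{T_0}=0$, i.e.
\[
P(\Theta)T_0=0,\qquad P(z)=(z-|A|)^{L+1}(z+|A|)^{L+1}.
\]

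Finally I would carry out the spectral reduction. From $\Theta^{j}T_0=A^{j}T_j$ for $j\ge 0$ and $P(\Theta)T_0=0$ it follows that $E:=\operatorname{Span}_{\C}\{T_0,T_1,\dots,T_{2L+1}\}$ is a finite-dimensional $\Theta$-invariant subspace of $\mathcal{S}(\R^d)'$ on which $\Theta$ satisfies $P(\Theta|_E)=0$; since $A\ne 0$, the only possible eigenvalues $|A|$ and $-|A|$ are distinct. Splitting $E=E_{+}\oplus E_{-}$ into the generalized eigenspaces for $|A|$ and $-|A|$, writing the associated projections as polynomials $q_{\pm}(\Theta)$ in $\Theta$ (Chinese Remainder Theorem), and putting $T_k^{\pm}:=q_{\pm}(\Theta)T_k=\sum_i c_i^{\pm}A^{i}T_{k+i}$ — a finite sum with coefficients independent of $k$ — one obtains $T_0=T_0^{+}+T_0^{-}$, with each sequence $\{T_k^{\pm}\}_{k\ge 0}$ satisfying $\Theta T_k^{\pm}=AT_{k+1}^{\pm}$, a uniform bound $|\langle T_k^{\pm},\psi\rangle|\le M''\gamma(\psi)$, and $(\Theta\mp|A|)^{L+1}T_0^{\pm}=0$. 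Since $|A|$ and $-|A|$ both have modulus $|A|$, Lemma~\ref{lemma3.1} (whose proof uses neither the radiality nor the exponential-type assumptions) gives $\Theta T_0^{\pm}=\pm|A|\,T_0^{\pm}$. Taking $T_{+}=T_0^{+}$ and $T_{-}=T_0^{-}$ settles~(c); uniqueness holds because any common element of $\ker(\Theta-|A|)$ and $\ker(\Theta+|A|)$ has Fourier transform supported in the disjoint sets $\{m=|A|\}$ and $\{m=-|A|\}$ and is therefore zero. For~(a), the assumption $-|A|\notin m(\R^d)$ forces $\widehat{T_{-}}$ to be supported in the empty set $\{m=-|A|\}$, hence $T_{-}=0$ and $\Theta T_0=|A|T_0$; part~(b) is entirely symmetric.

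I expect the crux to be the middle step. One must check carefully that the single-seminorm hypothesis genuinely caps the order of $\widehat{T_0}$, and then invoke correctly the (tempered) local structure theorem for distributions supported in a closed set, ensuring in particular that the required order of vanishing of $g^{L+1}\phi$ along the possibly unbounded and possibly singular level set $Z$ really holds. Once $P(\Theta)T_0=0$ is established, the rest is Theorem~\ref{theorem4.1}, Lemma~\ref{lemma3.1}, and finite-dimensional linear algebra.
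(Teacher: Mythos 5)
Your proof is correct in outline, but it reaches the paper's conclusions by a genuinely different route in the two places that carry the real content. For the key vanishing identity, the paper does not invoke the structure theorem for finite-order distributions supported in a closed set; instead it tests $(m(\xi)-m(\alpha))^{N+1}\widehat{T_0}$ against $\phi\in C_c^{\infty}(\R^d)$ and inserts an explicit cutoff $g_r(p(\xi))$, $p=m-m(\alpha)$, living in the \emph{range} of the symbol, so that the Leibniz estimate produces factors $r^{N+1-|\eta|}r^{-|\tau|}\to 0$ once $N$ exceeds the order of the controlling seminorm. That computation is in effect a self-contained proof of exactly the special case of the annihilation theorem (H\"ormander's Theorem 2.3.3) that you cite, and working with compactly supported $\phi$ sidesteps the uniformity issues on the possibly unbounded level set $Z$ that you correctly flag as the crux; if you keep your formulation, you should likewise reduce to $\phi\in C_c^{\infty}$ by density before applying the local structure theorem, and you should justify (as the paper implicitly does) that the single-seminorm hypothesis transfers through the Fourier transform to a finite-order bound on $\widehat{T_0}$. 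For part (c), the paper avoids generalized eigenspaces altogether: it applies part (a) to $\Theta_0=\Theta^2$ (whose symbol $m^2\ge 0$ never takes the value $-|A|^2$) to get $\Theta^2T_0=|A|^2T_0$, and then writes $T_{\pm}=\frac{|A|T_0\pm\Theta T_0}{2|A|}$ explicitly, whereas you factor the annihilating polynomial $(z^2-|A|^2)^{L+1}$, project onto generalized eigenspaces via polynomials in $\Theta$, and invoke Lemma~\ref{lemma3.1} twice to kill the nilpotent parts. Both are valid; the paper's version is more elementary and gives closed-form formulas for $T_{\pm}$, while yours has the advantage of producing the decomposition uniformly in all cases, so that (a) and (b) drop out as the degenerate cases where one support set $\{m=\mp|A|\}$ is empty (the paper instead proves (a) first and deduces (b) and (c) from it). Your uses of Lemma~\ref{lemma3.1} outside its stated radial, exponential-type setting are legitimate — its proof is purely algebraic — and indeed the paper itself invokes it the same way at the end of its proof of part (a).
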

		\begin{proof}
		(a)
	Let $|A|=m(\alpha)$ for some $\ap \in \R^d$.
	It suffices to show that $(\Theta-m(\alpha))^{N+1} T_0=0$ for some $N \in \mathbb N$ or equivalently $\left\langle(m(\xi)-m(\alpha))^{N+1} \hat{T}_0, \phi\right\rangle=0$, for all $\phi \in C_c^{\infty}(\R^d)$.
	Combining the facts that  $m$  is real valued and $-|A| \notin m( \R^d)$ with theorem \ref{theorem4.1}(iii), we conclude that
	$$\text{Supp~} \widehat{T_0} \subset\left\{\xi \in \mathbb{R}^d| | m(\xi)|=|A|\right\} =\left\{\xi \in \mathbb{R}^d| m(\xi)=m(\alpha)\right\}. $$
	Let ` $g$ ' be an even function in $C_c^{\infty}(\mathbb{R})$ such that
	$g \equiv 1 \text { on }[-1 / 2,1 / 2] \text { \& Supp} (g) \subseteq(-1,1) $.
	For $r>0$, let $g_r$ be defined by $g_r(\lambda) = g(\frac{\lambda}{r})$. 
	Let $B=\max \left\{\left|\frac{d^k}{d \lambda^k} g(\lambda)\right| ; \lambda \in(-1,1), k \leq N\right\}$.
	Hence we obtain $\left|\frac{d^k}{d \lambda^ k} g_r(\lambda)\right| \leq \frac{B}{r^k}$ for all $k \leq N$.
	Let $p(\xi) =m(\xi)-m(\ap)$. For $\phi \in C_c^{\infty}(\R^d)$, define $\mathfrak{H}_r(\xi) = (m(\xi) - m(\ap))^{N+1} g_r(p(\xi)) \phi(\xi)$. It can be verified that $\mathfrak{H}_r \in \mathcal{S}(\R^d)$. Let $h_r \in \mathcal{S}(\R^d)$ such that ${h_r}=\hat{ \mathfrak{H}}_r.$ Since $\mathfrak{H}_r(\xi) = (m(\xi) - m(\ap))^{N+1} \phi(\xi)$ in a neighborhood of Supp $(\widehat{T_0})$, we have $$|\langle (m(\xi) - m(\ap))^{N+1} \widehat{T_0}, \phi \rangle| = | \langle \widehat{T_0}, \mathfrak{H_r} \rangle| = |\langle T_0, h_r \rangle| \leq M \gm(h_r) \leq M' \mu_{\ap,\bt}(\mathfrak{H_r}),$$ where the seminorm $\mu_{\ap,\bt}(\mathfrak{H}_r)= \sup_{\xi \in \R^d} |\xi^\ap D^\bt \mathfrak{H}_r(\xi)|$. The proof will be complete once we show that $\mu_{\ap,\bt}(\mathfrak{H}_r) \to 0$, as $r \to 0$ when $\mathfrak{H}_r$ is defined for a suitably large $N$. As $g_r(p(\xi))$ vanishes outside the set $\{\xi \in \R^d | |p(\xi)| > r\}$, it is enough to consider the supremum over the set $$ E = \{\xi \in \R^d | |p(\xi)| \leq r\} \cap \text{Supp} \;\phi.$$ We note that for $\xi \in E,$ we have $ |(m(\xi) - m(\ap)| \leq r.$ Now \begin{align*}
		\mu_{\ap,\bt}(\mathfrak{H_r}) & = \sup_{\xi \in E} \left| \xi^\ap D^\bt (m(\xi) - m(\ap))^{N+1} g_r(p(\xi)) \phi(\xi) \right| \\ & =  \sup_{\xi \in E} \left| \xi^\ap \sum_{\eta, \delta,\tau \leq \bt} C_{\eta, \delta, \tau} D^\eta (m(\xi) - m(\ap))^{N+1} D^\delta \phi(\xi) D^\tau g_r(p(\xi)) \right| \\ & \leq M \sup_{\xi \in E} \sum_{\eta+\tau \leq \bt}  C_{\eta, \tau} \left| D^\eta (m(\xi) - m(\ap))^{N+1} \right| \left| D^\tau g_r (p(\xi)) \right| \\ & \leq M_1 \sum_{\eta+\tau \leq \bt} r^{N+1-|\eta|} r^{-|\tau|} 
	\end{align*}
Thus for $r \to 0,$ it can be seen $\mu_{\ap,\bt}(\mathfrak{H_r})$ tends to zero. Hence $\left\langle(m(\xi)-m(\alpha))^{N+1} \hat{T}_0, \phi\right\rangle=0$ for large $N > |\bt|$. On invoking Lemma 3.1, we obtain $(\Theta-m(\ap)) T_0 =0$.

\noindent (b) It can be seen that $-\Theta$ satisfies the hypothesis of Part(a) with $A$ replaced by $-A$. Hence it follows from (a) that $\Theta T_0 = -|A|T_0$.

\noindent (c) By setting $\Theta_{0} = \Theta^2$, it is clear that the sequence $\{T_{2k}\}$ satisfies the recurrence relation $\Theta_{0} T_{2k} = A^2 T_{2k+2}$.  This allows us to apply the result of part (a) for the parameters $\Theta_{0}$ and $|A|^2$, yielding the relationship: $\Theta_{0} T_{0} = |A|^2 T_{0}$. 

Now we define the components $T_{+}$ and $T_{-}$ explicitly as follows: 
$$T_{+} = \frac{|A| T_{0} + \Theta T_{0}}{2|A|}$$ 
and $$ T_{-} = \frac{|A| T_{0} - \Theta T_{0}}{2|A|}.$$
It can be easily verified that $T_0 = T_+ + T_-$ and that these components satisfies the required properties $\Theta T_+ = |A| T_+$ and $\Theta T_- =- |A| T_-$.

To prove uniqueness, assume an alternative decomposition of the form $T_{0} = S_{+} + S_{-}$ exists, where $S_{+}$ and $S_{-}$ also satisfy the required eigenvalue properties. i.e. $ \Theta S_{+} = |A| S_{+}$ and $ \Theta S_{-} = -|A| S_{-}$. We then have a system:$T_{0}	 = S_{+} + S_{-}$ and $\Theta T_{0} = |A| S_{+} - |A| S_{-}$. Solving this linear system for $S_{+}$ and $S_{-}$ uniquely determines them to be $S_{+} = T_{+}$ and $S_{-} = T_{-}$, confirming the decomposition is unique.

\noindent (d) From Theorem \ref{theorem4.1}, it is straight forward to see that the distributional support of $T_{0}$ is empty for all $x \in \R^d$.  Hence $T_{0} = 0$.  
\end{proof}
The preceding theorem yields several interesting corollaries when specialized to particular multipliers. We list a few representative cases for brevity.

Let $\{f_k\}_{k \in \mathbb{Z}}$ be a doubly infinite sequence of measurable functions on $\R^d$ satisfying $\Theta f_k = A f_{k+1}$ for all $k \in \mathbb{Z}$, where $\Theta$ is a multiplier (specified below) and $A \in \mathbb{C}$ is a constant. The following conclusions hold:
\begin{enumerate}[(1)]
\item	For $\Theta= \D$, suppose that for all $k \in \Z$ such that $\|f_k\|_{p} \leq M$ for $\frac{2d}{d-1} < p \leq \infty $ and for any $A \in \C$, then $ \D f_0 = |A| f_0$.

\item For $\Theta = e^{-t\D}$, for a fixed $t>0$, suppose that  $k \in \Z$ such that $\|f_k\|_{p} \leq M$ for $\frac{2d}{d-1} < p \leq \infty $ and $|A| = e^{-t \la_{0}^2}$ for some $\la_{0} \in \R$, then $ \D f_0 = \la_{0}^2 f_0$.

\item For $\Theta = M_t$(respectively $\Theta=B_t$), for a fixed $t>0$, suppose that  $k \in \Z$ such that $\|f_k\|_{p} \leq M$ for $1 \leq p \leq \infty $ and $|A| = 1$, then $ \D f_0 = 0$.
\end{enumerate}

Note that the space $L^p(\R^d)$ contains eigenfunctions of $\D$ only for $p> \frac{2d}{d-1}$.
The key ingredients behind the proof of the above assertions are as follows:

\begin{enumerate}[(a)] \item It is well known that functions in $L^p$ for $1\leq p \leq \infty $ are tempered distributions.
	\item  Note that $\D$ when considered as a multiplier has a symbol that takes only positive values and hence the conclusion follows by Theorem \ref{theorem4.2} (i). 
	\item A similar argument when applied to the heat operator $e^{-t\D}$ (respectively $M_t$ and $B_t$) whose symbol assumes only positive values on $\R$, would yield us $e^{-t\D} f_0= e^{-t\la_{o}^2}f_0$ (respectively $M_t f_{0} =f_{0}$ and $B_t f_{0} =f_{0}$). On further applying the One Radius Theorem, we obtain the required conclusions.
	
\end{enumerate}
In contrast to the preceding discussion which addressed multipliers with exclusively real symbols, we will now analyze those with complex-valued symbols. 
\begin{theorem}\label{theorem4.3}
	Let $\{f_k\}_{k \in Z}$ be a doubly infinite sequence of functions on $\R^d$ and $\Theta $ be a multiplier on $\mathcal{S}(\R^d)$ with symbol $m(\xi) \in C^\infty (\R^d)$. Suppose that for every $k \in \Z, \|f_k\|_p \leq M $ for $1 \leq p \leq \infty$ and $\Theta f_k = A f_{k+1}$ for constants $M>0$, $A \in \C$. If $\{\xi \in \R^d| |m(\xi)| = |A|\}$ is finite, then $f_0$ can be uniquely written as $f_0 = g_1+ g_2+ \cdots+ g_r$ for functions $g_i, i=1,2, \ldots, r$ on $\R^d$ satisfying $\Theta g_i= A_ig_i$, where $A_i \in \C$ are distinct and $|A_i| = |A|$. Further if $m(\xi)= c$, a constant for all $\xi \in E$, then $f_0$ is an eigenfunction of $\Theta$ with eigenvalue $c$.
	\end{theorem}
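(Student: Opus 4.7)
The plan is to begin by invoking Theorem \ref{theorem4.1}(iii) applied to the sequence $\{f_k\}$, viewed as tempered distributions via the $L^p$ bound (with the seminorm $\gamma(\psi) = \|\psi\|_{p'}$). This yields
\[
\mathrm{Supp}\,\widehat{f_0} \;\subseteq\; E := \{\xi \in \R^d : |m(\xi)| = |A|\},
\]
which is finite by hypothesis, say $E = \{\xi_1, \ldots, \xi_s\}$. A classical result in distribution theory then forces $\widehat{f_0}$ to be a finite linear combination of Dirac masses and their derivatives at the points of $E$, and the inverse Fourier transform of such a distribution has the form $f_0(x) = \sum_{j=1}^{s} p_j(x) e^{i\xi_j \cdot x}$ for some polynomials $p_j$.

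Next I would use the $L^p$ bound to eliminate the polynomial factors. For $1\leq p<\infty$ this is immediate: no nonzero finite sum of polynomial-times-character functions with distinct frequencies can lie in $L^p$, so $f_0 \equiv 0$ and the required decomposition is trivial. For $p=\infty$, the standard Bohr--type fact that $\sum_{j} p_j(x)e^{i\xi_j\cdot x}$ is bounded with distinct $\xi_j$'s only if every $p_j$ is a constant, reduces $f_0$ to the pure-exponential form $f_0(x) = \sum_{j=1}^{s} c_j e^{i\xi_j \cdot x}$. This step, of ruling out the polynomial growth via the global $L^p$ bound, is the most delicate point of the argument.

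I would then group the points of $E$ by the value of $m$: let $A_1,\ldots,A_r$ be the distinct values attained by $m$ on $E$ (so $|A_i|=|A|$), set $E_i := \{\xi_j \in E : m(\xi_j) = A_i\}$, and define
\[
g_i(x) \;=\; \sum_{\xi_j \in E_i} c_j \, e^{i\xi_j \cdot x}, \qquad i=1,\ldots,r.
\]
Since the extended multiplier satisfies $\Theta\bigl(e^{i\xi\cdot x}\bigr) = m(\xi)\,e^{i\xi\cdot x}$ (each character is an eigendistribution of $\Theta$), a direct computation gives $\Theta g_i = A_i g_i$, and clearly $f_0 = g_1+\cdots+g_r$.

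For uniqueness, if $f_0 = \sum_{i=1}^{r} h_i$ with $\Theta h_i = A_i h_i$, then applying $\Theta^k$ yields $\Theta^k f_0 = \sum_{i=1}^{r} A_i^k h_i$ for $k=0,1,\ldots,r-1$; the distinctness of the $A_i$ makes the associated Vandermonde system invertible, so each $h_i$ is uniquely determined by $f_0$, forcing $h_i = g_i$. The final assertion is then immediate: if $m \equiv c$ on $E$, there is a single value $A_1 = c$, hence $r=1$ and $f_0 = g_1$ is itself an eigenfunction of $\Theta$ with eigenvalue $c$.
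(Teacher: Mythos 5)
Your proposal is correct and follows the same route as the paper: Theorem~\ref{theorem4.1}(iii) gives $\mathrm{Supp}\,\widehat{f_0}\subseteq E$, the structure theory for tempered distributions with finite Fourier support (the paper's Lemma~\ref{lemma4.5}, which in turn rests on Lemma~\ref{lemma4.4}) yields $f_0 = \sum_j c_j E_{\zeta^{(j)}}$ (constants for $p=\infty$, zero for $p<\infty$), grouping the characters by the value of $m$ produces the $g_i$, and a linear-algebra argument (the paper cites Proposition~\ref{proposition4.6}, you give the Vandermonde computation directly, which is the same mechanism) yields uniqueness. Your explicit Vandermonde step and the observation that $\Theta E_\xi = m(\xi)E_\xi$ via the distributional extension of $\Theta$ are just the unpacked versions of what the paper delegates to its cited lemmas, so the two proofs are substantively identical.
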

	To prove Theorem \ref{theorem4.3}, it is necessary to establish two supporting lemmas and a key proposition, which are given below. In what follows $E_{\zeta} (x)= e^{i x \cdot \zeta}$ for $x, \zeta \in \R^d$.  $\pa^{\ap}_{\zeta} ~\text{denotes}~ \frac{\pa^{|\ap|}}{\pa^{\ap_{1}} _{\zeta_{1}} \pa^{\ap_{2}} _{\zeta_{2}} \ldots \pa^{\ap_{d}} _{\zeta_{d}}}.$ 
	\begin{lemma}\label{lemma4.4}
		Let $ \zeta^{(1)}, \zeta^{(2)}, \ldots, \zeta^{(k)}$ be distinct vectors in $\R^d$ and $P_1, P_2, \ldots, P_k$ be polynomials in $d$ variables. Suppose that $\sum_{j=1}^{k} P_j(\partial_{\zeta}) E_{\zeta}|_{\zeta=\zeta^{(j)}} \in L^p(\R^d)$ for $1 \leq p \leq \infty$, then the following assertions are satisfied.
		\begin{enumerate}[(1)]
                     \item If $1 \leq p < \infty$, then $P_j$'s are zero polynomials.
		\item  If $p = \infty$, then $P_j, j= 1, \ldots, k$ are constant polynomials.
		
		\end{enumerate}
	\end{lemma}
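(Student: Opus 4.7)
Proof proposal. The first move is to rewrite the expression in a form directly amenable to Fourier analysis. Since $\pa_\zeta^{\ap} e^{ix\cdot\zeta} = (ix)^{\ap} e^{ix\cdot\zeta}$, for each polynomial $P_j$ we have
\[
P_j(\pa_\zeta) E_\zeta(x)\big|_{\zeta=\zeta^{(j)}} = Q_j(x)\,e^{ix\cdot\zeta^{(j)}}, \qquad Q_j(x):=P_j(ix),
\]
where $Q_j$ is the zero polynomial (respectively, a constant polynomial) precisely when $P_j$ is. The hypothesis therefore becomes
\[
f(x) := \sum_{j=1}^k Q_j(x)\, e^{ix\cdot\zeta^{(j)}} \in L^p(\R^d),
\]
and the plan is to isolate each summand $Q_j(x)\,e^{ix\cdot\zeta^{(j)}}$ via a Fourier-side cutoff, after which one invokes the elementary fact that a polynomial on $\R^d$ lies in $L^p(\R^d)$ only if it vanishes identically (for $p<\infty$) and lies in $L^\infty(\R^d)$ only if it is constant.

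Since $L^p(\R^d)\subset \mathcal{S}'(\R^d)$ for every $p\in [1,\infty]$, the distributional Fourier transform
\[
\hat f = (2\pi)^d \sum_{j=1}^k Q_j(i\pa_\xi)\,\delta_{\zeta^{(j)}}
\]
is well-defined and supported on the finite set $\{\zeta^{(1)},\ldots,\zeta^{(k)}\}$. Using distinctness of the $\zeta^{(j)}$'s, I would fix, for each $j$, a cutoff $\eta_j\in C_c^\infty(\R^d)$ equal to $1$ on a neighborhood of $\zeta^{(j)}$ and identically $0$ on neighborhoods of every $\zeta^{(\ell)}$ with $\ell\neq j$, and then set $\phi_j\in \mathcal{S}(\R^d)$ to be the Schwartz function whose Fourier transform equals $\eta_j$. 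Because multiplying a derivative of a delta function at $\zeta^{(\ell)}$ by a smooth function depends only on the Taylor expansion of that function at $\zeta^{(\ell)}$, the Leibniz rule immediately yields $\eta_j \hat f = (2\pi)^d Q_j(i\pa_\xi)\delta_{\zeta^{(j)}}$. Inverting the Fourier transform, this gives the key identity
\[
(\phi_j*f)(x) = Q_j(x)\,e^{ix\cdot\zeta^{(j)}}.
\]

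With this identity in hand the argument closes quickly. Since $\phi_j\in \mathcal{S}(\R^d)\subset L^1(\R^d)$, Young's inequality gives $\phi_j*f\in L^p(\R^d)$, hence $|Q_j|\in L^p(\R^d)$. For $1\leq p<\infty$ this forces $Q_j\equiv 0$, and for $p=\infty$ it forces $Q_j$ to be constant, yielding (1) and (2) respectively. The only step with real content is the Leibniz-rule verification that the cutoff $\eta_j$ annihilates every $\ell\neq j$ summand of $\hat f$ while preserving the $j$-th one; this is nothing more than the standard fact that a distribution supported at a single point is determined by the germ of the test function at that point, but it is precisely the place where distinctness of the $\zeta^{(j)}$'s is genuinely used.
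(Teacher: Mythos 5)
Your proof is correct, and while it shares the paper's core mechanism — Fourier-side localization by convolving $f$ with a Schwartz function — the execution is cleaner and in fact a bit stronger. The paper chooses $\psi$ by prescribing jet conditions on $\hat\psi$ (vanishing up to high order at $\zeta^{(i)}$ for $i\neq 1$, vanishing to order $n_1-1$ but not $n_1$ at $\zeta^{(1)}$), which after convolution isolates only the top-degree contribution $C\,E_{\zeta^{(1)}}$ and then invokes $E_{\zeta^{(1)}}\notin L^p$; the $p=\infty$ case is handled by a separate remark using $\partial_\zeta E_\zeta\notin L^\infty$, and the whole argument is written for $d=1$ with an appeal to analogy for higher dimensions. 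You instead take $\widehat{\phi_j}=\eta_j$ to be a cutoff identically $1$ near $\zeta^{(j)}$ and $0$ near every other $\zeta^{(\ell)}$, so that $\phi_j*f$ recovers the \emph{entire} summand $Q_j(x)e^{ix\cdot\zeta^{(j)}}$ rather than just its leading term. This pays off twice: the argument is dimension-free from the start, and both conclusions (1) and (2) fall out of the single elementary fact that the only polynomials in $L^p(\R^d)$ are $0$ when $p<\infty$ and constants when $p=\infty$, with no separate case analysis. The Leibniz/germ observation you flag as the only real content is exactly right and is used correctly — a distribution supported at a point is annihilated by any smooth function vanishing identically near that point, and is unchanged by one that is identically $1$ there.

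One small cosmetic point: the Fourier normalization constant in $\hat f=(2\pi)^d\sum_j Q_j(i\partial_\xi)\delta_{\zeta^{(j)}}$ and in the inversion step depends on the convention in use (the paper defines $\hat f(\xi)=\int f(x)e^{-ix\cdot\xi}\,dx$), but this only affects harmless constants and does not touch the membership-in-$L^p$ conclusion, so the argument is sound as written.
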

	
	\begin{proof}
		We will give the proof of the lemma for $d=1$ and similar techniques can be emulated to extend the proof to higher dimensions. We assume that $$f = \sum_{i=1}^{k} P_i(\pa_\zeta)E_{\zeta} \Big|_{\zeta= \zeta^{i)}}.$$ Without loss of generality, suppose that $P_j$ is a non-zero polynomial of degree $n_j,$ for $ j = 1,2, \ldots k$. Let $N_i = \max_{j \neq i} n_j. $
We choose a function $\psi \in \mathcal{S}(\R)$ such that $ \partial_{\zeta}^{(j)} \hat{\psi}|_{\zeta = \zeta^{(i)}} =0$ for every $2 \leq i \leq k, 0 \leq j \leq N_{1}$, and $\partial_{\zeta}^{j} \hat{\psi}|_{\zeta = \zeta^{(1)}} =0,$ for all $0 \leq j <  n_1$ and $\partial_{\zeta}^{n_1} \hat{\psi}|_{\zeta = \zeta^{(1)}} \neq0$.

	Then \begin{align*} f * \psi &= \sum_{i=1}^{k} P_i(\partial_{\zeta}) (E_{\zeta}* \psi)|_{\zeta=\zeta^{(i)}} = \sum_{i=1}^{k} P_i(\partial_{\zeta}) (\hat{\psi(\zeta)} E_{\zeta})|_{\zeta=\zeta^{(i)}} \\  &=C \pa_{\zeta}^{n_{1}} \hat{\psi}|_{\zeta=\zeta^{(1)}} E_{\zeta^{(1)}} = C \pa_{\zeta}^{n_{1}} \hat{\psi}{\zeta^{(1)}} E_{\zeta^{(1)}}.
		 \end{align*} where $C$ be any arbitrary constant.  Since $f \in L^p(\R)$ and $\psi \in \mathcal{S}(\R), f*\psi \in L^p(\R)$ and hence $E_{\zeta^{(1)}} \in L^p(\R)$ which is a contradiction. Therefore, $P_1$ is a zero polynomial. On suitably choosing $\psi$, we can show that $P_j$, for $2 \leq j \leq k$ are also zero polynomials. This completes the proof of Part (1).
		 
		 	Further using the fact that $\pa_{\zeta} E_\zeta \notin L^\infty(\R)$, we can have a similar proof of Part (2) for an appropriate choice of $\psi$.
	\end{proof}
	
	\begin{lemma}\label{lemma4.5}
		Let $T$ be a tempered distribution on $\R^d$ such that the distributional support of $\hat{T}$ is $\{ \zeta^{(1)}, \zeta^{(2)}, \ldots, \zeta^{(k)} \}$. Then $T = \ds \sum_{j=1}^{k} P_j(\partial_{\zeta}) E_{\zeta}|_{\zeta=\zeta^{(j)}}$ for some polynomials $P_j, 1 \leq j \leq k.$ In particular, if $T$ is given by a measurable function $f \in L^{p}(\R^d)$, then $f = c_1 E_{\zeta^{(1)}}+ c_2 E_{\zeta^{(2)}}+ \cdots + c_k E_{\zeta^{(k)}}$ for some constants $c_1, c_2, \ldots, c_k$ when $p=\infty$, and $f=0$ when $1\leq p <\infty$.
		\end{lemma}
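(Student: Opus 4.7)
The plan is to reduce the statement to the classical structure theorem for tempered distributions supported at a single point, and then invoke Lemma~\ref{lemma4.4} to handle the $L^{p}$ refinement. First I would choose a smooth partition of unity $\{\chi_j\}_{j=1}^{k}$ on $\R^d$ subordinate to pairwise disjoint open neighborhoods $U_j$ of $\zeta^{(j)}$, with $\chi_j \equiv 1$ near $\zeta^{(j)}$. Because $\mathrm{Supp}\,\widehat{T} \subseteq \{\zeta^{(1)},\dots,\zeta^{(k)}\}$, one has the decomposition $\widehat{T} = \sum_{j=1}^{k} \chi_j \widehat{T}$ in $\mathcal{S}(\R^d)'$, where each $\chi_j \widehat{T}$ is a tempered distribution supported in the singleton $\{\zeta^{(j)}\}$.

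Next, I would apply the standard structure theorem for distributions supported at a single point (see, e.g., the classical result attributed to Schwartz): any tempered distribution supported at $\{\zeta^{(j)}\}$ is a \emph{finite} linear combination
\[
  \chi_j \widehat{T} \;=\; \sum_{|\alpha|\leq N_j} c_{j,\alpha}\, \partial^{\alpha} \delta_{\zeta^{(j)}},
\]
for some polynomials' worth of coefficients $c_{j,\alpha} \in \C$ and integers $N_j$. Taking the inverse Fourier transform term by term and using the identity $\mathcal{F}^{-1}[\partial^{\alpha}\delta_{\zeta^{(j)}}](x) = (2\pi)^{-d} (ix)^{\alpha}\, e^{i x\cdot \zeta^{(j)}}$, which coincides (up to the multiplicative constant absorbed into the polynomial) with $P_{j,\alpha}(\partial_\zeta)E_\zeta\bigr|_{\zeta=\zeta^{(j)}}$ for the monomial $P_{j,\alpha}(\partial_\zeta) = c_{j,\alpha}' \partial_\zeta^{\alpha}$, I obtain
\[
  T \;=\; \sum_{j=1}^{k} P_{j}(\partial_{\zeta})\, E_{\zeta}\bigr|_{\zeta=\zeta^{(j)}}
\]
for suitable polynomials $P_{1},\dots,P_{k}$. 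This establishes the first assertion.

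For the $L^{p}$ refinement, suppose $T$ coincides with a measurable function $f \in L^{p}(\R^d)$. Then the representation above says
\[
  f \;=\; \sum_{j=1}^{k} P_{j}(\partial_{\zeta})\, E_{\zeta}\bigr|_{\zeta=\zeta^{(j)}} \;\in\; L^{p}(\R^d),
\]
so I am exactly in the setting of Lemma~\ref{lemma4.4} applied to the distinct frequencies $\zeta^{(1)},\dots,\zeta^{(k)}$. For $1\leq p < \infty$, Lemma~\ref{lemma4.4}(1) forces every $P_{j}$ to be the zero polynomial, hence $f=0$. For $p=\infty$, Lemma~\ref{lemma4.4}(2) forces each $P_{j}$ to be a constant $c_j$, which yields $f = c_1 E_{\zeta^{(1)}} + \dots + c_k E_{\zeta^{(k)}}$, as claimed.

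The bulk of the argument is book-keeping; no step presents a serious analytic obstacle, since the structure theorem for point-supported distributions and Lemma~\ref{lemma4.4} do the essential work. The only delicate point worth verifying carefully is that the cutoffs $\chi_j$ can be arranged so that $\chi_j \widehat{T}$ genuinely realizes the component of $\widehat{T}$ at $\zeta^{(j)}$ (i.e.\ that the localization is unambiguous), which follows because the $U_j$ are disjoint and $\widehat{T}$ is supported in their union.
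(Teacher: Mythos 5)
Your proposal is correct and follows essentially the same route as the paper: reduce to the Schwartz structure theorem for distributions supported at finitely many points, take the (inverse) Fourier transform to get $T=\sum_j P_j(\partial_\zeta)E_\zeta|_{\zeta=\zeta^{(j)}}$, and then invoke Lemma~\ref{lemma4.4} for the $L^p$ refinement. The only difference is that you spell out the partition-of-unity localization explicitly, whereas the paper cites the finite-support structure theorem as already established.
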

		
		\begin{proof}
			Since the distributional support of $\hat{T}$ is the finite set $\{ \zeta^{(1)}, \zeta^{(2)}, \ldots, \zeta^{(k)} \}$, it is already established that there exist polynomials $P_j$ for $1 \leq j \leq k$ such that $$ \widehat{T} = \ds \sum_{j=1}^{k} P_j(\partial_{\zeta}) \delta_{\zeta^{(j)}} $$ where $ \delta_{\zeta^{(j)}}$ represents the dirac mass at $\zeta^{(j)}$. Now if we define $S = \ds \sum_{j=1}^{k} P_j(\partial_{\zeta}) E_{\zeta}|_{\zeta=\zeta^{(j)}}$, it is straight forward to see that $ \widehat{S} = \ds \sum_{j=1}^{k} P_j(\partial_{\zeta}) \delta_{\zeta^{(j)}}$. Invoking the injectivity of Fourier transform of tempered distributions on $\R^d$, we obtain that $T = \ds \sum_{j=1}^{k} P_j(\partial_{\zeta}) E_{\zeta}|_{\zeta=\zeta^{(j)}}$. If $T$ is given by a measurable function $f \in L^{p}(\R^d)$, then a direct application of Lemma \ref{lemma4.4} yields that $f = c_1 E_{\zeta^{(1)}}+ c_2 E_{\zeta^{(2)}}+ \ldots + c_k E_{\zeta^{(k)}}$ for some constants $c_1, c_2, \ldots, c_k$ when $p=\infty$, and $f=0$, when $1\leq p <\infty$.
		\end{proof}
		Next we shall provide an important proposition which is essentially required for us in deriving the proof of Theorem \ref{theorem4.3}. For the sake of brevity, we omit its proof and urge the reader to see \cite[Proposition 5.0.8]{muna} where an equivalent version is already established using elementary techniques from linear algebra.
		\begin{proposition}\label{proposition4.6}
			Let $f$ be a measurable function on $\R^d$ which can be written as a finite sum $f = f_1 + f_2 + \cdots +f_n$, where for some linear operator $\Theta$, $\Theta f_i = \ap_i f_i$, for $i = 1, \ldots, n$ with $\ap_1, \ap_2, \ldots, \ap_n \in \C$ distinct. Then \begin{equation}\label{eq:theta}
				(\Theta-\ap_1 I) (\Theta-\ap_2 I) \cdots (\Theta-\ap_n I) f= 0. 
			\end{equation} 
			Conversely, if a measurable function $f$ on $\R^d$ satisfies \eqref{eq:theta} for distinct $\ap_1, \ap_2, \ldots, \ap_n \in \C $, then $f$ can be uniquely written as a finite sum of eigenfunctions of $\Theta$ corresponding to the eigenvalues $\ap_1, \ap_2, \ldots, \ap_n $.
		\end{proposition}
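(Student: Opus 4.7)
The plan is to handle the two implications by reducing each to a routine manipulation of commuting operators, exploiting the single fact that $(\Theta - \ap_1 I), \ldots, (\Theta - \ap_n I)$ all lie in the commutative algebra generated by $\Theta$. For the forward direction, I would fix an arbitrary summand $f_j$ and, after permuting the $n$ factors so as to place $(\Theta - \ap_j I)$ immediately to the left of $f_j$, invoke the eigenrelation $\Theta f_j = \ap_j f_j$ to conclude that the entire product annihilates $f_j$. Summing over $j$ yields $\prod_{i=1}^n(\Theta - \ap_i I)\,f = 0$.

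For the converse the central tool is a Lagrange-interpolation partition of unity in the polynomial algebra $\C[z]$. Set
$$q_i(z) := \prod_{j \neq i}(z - \ap_j), \qquad c_i := \Bigl(\prod_{j \neq i}(\ap_i - \ap_j)\Bigr)^{-1}, \qquad i = 1, \ldots, n.$$
Since the $\ap_i$ are distinct, the polynomial $\sum_{i=1}^n c_i q_i(z)$ has degree at most $n-1$ and evaluates to $1$ at each of the $n$ points $\ap_1, \ldots, \ap_n$, hence is identically $1$. Transferring this polynomial identity along the homomorphism $z \mapsto \Theta$ and letting the operators act on $f$, I would define
$$f_i := c_i\, q_i(\Theta)\,f, \qquad i = 1, \ldots, n,$$
so that $\sum_i f_i = f$ automatically. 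The eigenequation $(\Theta - \ap_i I) f_i = 0$ then reduces to $c_i \prod_{j=1}^n(\Theta - \ap_j I)\,f = 0$, which is precisely the hypothesis.

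Uniqueness of the decomposition follows from the classical linear independence of eigenvectors corresponding to distinct eigenvalues. Given a second decomposition $f = g_1 + \cdots + g_n$ with $\Theta g_i = \ap_i g_i$, set $h_i := f_i - g_i$, so that $\sum_i h_i = 0$ and each $h_i$ lies in the eigenspace for $\ap_i$. Applying $q_k(\Theta)$ to this relation, each factor $(\Theta - \ap_i I)$ with $i \neq k$ annihilates $h_i$, while $q_k(\Theta) h_k = q_k(\ap_k)\,h_k$; since $q_k(\ap_k) = \prod_{j \neq k}(\ap_k - \ap_j) \neq 0$, this forces $h_k = 0$, and uniqueness follows.

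There is no deep obstacle here: the argument rests entirely on the commutativity of polynomials in a single operator and on the elementary polynomial identity $\sum_i c_i q_i \equiv 1$. The only point requiring attention is that $q_i(\Theta) f$ be well defined, which is implicit in the converse hypothesis since $\prod_{i=1}^n(\Theta - \ap_i I)\,f$ is already assumed to make sense as a measurable function; the factoring then places $f$ automatically in the domain of each intermediate polynomial in $\Theta$.
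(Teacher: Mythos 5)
Your proof is correct and is exactly the elementary linear-algebra argument intended here: the paper itself omits the proof of this proposition and refers to Naik--Sarkar \cite[Proposition 5.0.8]{muna}, where the same kind of argument (commuting polynomial factors for the forward direction, a Lagrange-interpolation partition of unity $\sum_i c_i q_i(\Theta)=I$ for the decomposition, and $q_k(\ap_k)\neq 0$ for uniqueness) is carried out. No gaps; the only cosmetic caveat is that some of your components $f_i$ may vanish, which is harmless for the statement as formulated.
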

		
		Now that we have stated all the required intermediate results, let us prove Theorem \ref{theorem4.3}.
\begin{proof}[Proof of Theorem \ref{theorem4.3}] Owing to Theorem 4.1(iii), we obtain that the distributional support of $ \widehat{f_{0}}$ is contained in the set $E= \{\xi \in \R^d| |m(\xi)| = |A| \}$. Suppose $E= \{ \zeta^{(1)}, \zeta^{(2)}, \ldots, \zeta^{(k)} \}$, then it follows from Lemma \ref{lemma4.5}  that $f_0=  c_1 E_{\zeta^{(1)}}+ c_2 E_{\zeta^{(2)}}+ \ldots + c_k E_{\zeta^{(k)}}$ for some constants $c_1, c_2, \ldots, c_k.$ Now combining the terms $E_{\zeta^{(j)}}$'s that share the same eigenvalue $m(\xi)$ under the operator $\Theta$,we may express $ f_0 = g_1+ g_2 + \ldots + g_r$, where each $g_i$ is an eigenfunction of $\Theta$ corresponding to a distinct eigenvalue $A_i$.  Hence we obtain $(\Theta-A_1 I)(\Theta-A_2 I) \ldots (\Theta-A_r I)f_0 = 0$. In view of  Proposition \ref{proposition4.6}, we have the required conclusion. The last part of the assertion is immediate.
\end{proof}
We restrict ourselves to radial tempered distributions that satisfies the Strichartz recursion and boundedness with respect to the Schwartz seminorms for the rest of this section.  First we shall prove an equivalent versions of Theorem \ref{theorem4.1} for the radial tempered distributions, that determines their support under the spherical Fourier transform.  In what follows let $m_0(\la)= Rm(\la e_1)$, where $R$ denotes the radialization operator.
	\begin{theorem}\label{theorem4.7}
	Let $\Theta$ be a multiplier on $\mathcal{S}(\mathbb{R}^d)$ with symbol $m_0(\la) \in \mathcal{C}^{\infty}(\R)$, 
	Suppose $\{T_k\}_{k \in \Z}$ be a doubly infinite sequence of radial tempered distributions on $\mathbb{R}^d$ such that for all $k \in \mathbb{Z}$, $|\langle T_k, \psi \rangle| \leq M \gamma(\psi)$ for a fixed seminorm $\gamma$ on $\mathcal{S}(\mathbb{R}^d)$ and a constant $M>0$.
	Let $A \in \mathbb{C}$. Then the following conclusions hold.
	\begin{enumerate}[(i)]
		\item If for all $k \in \mathbb{Z}^+$, $\Theta T_k = A T_{k-1}$, then Supp $\mathcal{H}{T}_0 \subseteq \{ \la \in \mathbb{R} \mid |m_0(\la)| \leq |A| \}$.
		\item If for all $k \in \mathbb{Z}^-$, $\Theta T_{k-1} = A T_k$, then Supp $ \mathcal{H}{T}_0 \subseteq \{ \la \in \mathbb{R} \mid |m_0(\la)| \geq |A| \}$.
		\item If for all $k \in \mathbb{Z}$, $\Theta T_k = A T_{k+1}$, then Supp $ \mathcal{H}{T}_0 \subseteq \{ \la \in \mathbb{R} \mid |m_0(\la)| = |A| \}$.
	\end{enumerate}
\end{theorem}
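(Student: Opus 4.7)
The plan is to adapt the argument used in the proof of Theorem~\ref{theorem4.1}, replacing the Euclidean Fourier transform by the spherical Fourier transform $\mathcal{H}$ and the Schwartz test functions on $\R^d$ by even test functions on $\R$ in the image of the radial Schwartz space under $\mathcal{H}$. Since each $T_k$ is radial and $\Theta$ has radial symbol $m_0(\la)$, the multiplier acts on the spherical Fourier side by multiplication: $\mathcal{H}(\Theta T) = m_0(\la)\,\mathcal{H}T$ for every radial tempered distribution $T$. Iterating the recursion in part~(i) therefore yields the distributional identity $m_0(\la)^k \mathcal{H}T_0 = A^k \mathcal{H}T_k$ for all $k \in \Z^+$.

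For part~(i), fix $\la_0 \in \R$ with $|m_0(\la_0)| > |A|$. I would use continuity of $m_0$ to produce a compact neighborhood $V$ of $\la_0$ and a constant $c \in (0,1)$ with $|A| \le c\,|m_0(\la)|$ for every $\la \in V$. For any even $\phi \in C_c^\infty(\R)$ supported in $V$, the recursion and duality give
\[
|\langle \mathcal{H}T_0, \phi \rangle|
= \left|\left\langle T_k,\, \mathcal{H}^{-1}\!\left(\left(\tfrac{A}{m_0}\right)^{\!k}\! \phi\right)\right\rangle\right|
\le M\, \gamma\!\left(\mathcal{H}^{-1}\!\left(\left(\tfrac{A}{m_0}\right)^{\!k}\! \phi\right)\right).
\]
By continuity of $\mathcal{H}^{-1}$ between the relevant radial Schwartz-type spaces, this is controlled by an appropriate seminorm of $(A/m_0)^k \phi$, and a Leibniz expansion then shows the latter decays geometrically like $k^{|\beta|} c^{k-|\beta|}$ up to constants depending on $\phi$ and on derivatives of $1/m_0$, which are bounded on $V$ by smoothness and non-vanishing of $m_0$ there. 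Letting $k \to \infty$ we obtain $\langle \mathcal{H}T_0, \phi \rangle = 0$, whence $\la_0 \notin \mathrm{Supp}\,\mathcal{H}T_0$.

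Part~(ii) follows by the symmetric argument, reversing the recursion and driving $k \to -\infty$: for $\la_0$ with $|m_0(\la_0)| < |A|$, the factor $(m_0/A)^{-k}$ decays geometrically on a neighborhood of $\la_0$, and the same seminorm bookkeeping applies. Part~(iii) is then immediate by intersecting the containments from (i) and (ii), since the double-sided recursion supplies both hypotheses simultaneously.

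The main obstacle will be the seminorm bookkeeping in the Leibniz expansion of $(A/m_0(\la))^k \phi(\la)$, where derivatives of $m_0^{-k}$ of order $|\beta|$ carry factors like $k^{|\beta|}/|m_0|^{k+|\beta|}$ that must be absorbed by the geometric decay $c^k$. Because $m_0$ is smooth and bounded away from zero on the compact set $V$, this estimate is routine and parallels the corresponding step in the proof of Theorem~\ref{theorem4.1}. A secondary but standard point is that $\mathcal{H}$ is a topological isomorphism on the radial Schwartz-type spaces, so seminorms on the image side translate back to Schwartz seminorms on the radial subspace of $\mathcal{S}(\R^d)$, matching the hypothesis on $\gamma$.
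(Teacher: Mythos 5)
Your proposal is correct and takes essentially the same route the paper intends: the paper itself offers no detailed proof for Theorem~\ref{theorem4.7} and simply remarks that it follows by modifying the proof of Theorem~\ref{theorem4.1} via the spherical Fourier transform, which is exactly what you do. The key ingredients you identify are the right ones: that a multiplier with radial symbol preserves radiality and acts as multiplication by $m_0$ on the $\mathcal{H}$-side, that iterating the recursion transfers $\mathcal{H}T_0$ to $\mathcal{H}T_k$ with a factor $(A/m_0)^k$, that the topological isomorphism $\mathcal{H}\colon \mathcal{S}_{\mathrm{rad}}(\R^d)\to H_e(\R)$ converts the seminorm bound on $T_k$ into a seminorm bound on $(A/m_0)^k\phi$, and that the Leibniz estimate gives geometric decay on a compact neighborhood where $m_0$ is bounded away from $|A|$. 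One small remark: as printed, part (i) of the theorem reads ``$\Theta T_k = A T_{k-1}$'', which iterates to $m_0^k\,\mathcal{H}T_k = A^k\,\mathcal{H}T_0$ and would actually place the support in $\{|m_0|\ge|A|\}$ rather than $\{|m_0|\le|A|\}$; the indexing must be $\Theta T_k = A T_{k+1}$ (exactly as in Theorem~\ref{theorem4.1}(i)), which is the version your proof silently and correctly uses. With that reading fixed, your argument is sound.
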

 
 The proof of the above theorem can be obtained after suitably modifying the proof of Theorem \ref{theorem4.1} using the spherical Fourier transform. Next, we shall see a result (analogous version of Lemma \ref{lemma4.5}) that provides the structure of a radial tempered distribution whose spherical Fourier transform is supported on a finite set.
\begin{lemma}\label{lemma4.8}
	Let $T$ be a radial tempered distribution on $\R^d$ such that the distributional support of $\mathcal{H}(T) $ is $\{\la_{1},\la_{2}, \dots, \la_{k}\}$, then $T= P_1(\pa_{\la})\phi_\la \mid_{\la=\la_{1}} + P_2(\pa_{\la})\phi_\la \mid_{\la=\la_{2}}+ \cdots + P_k(\pa_{\la})\phi_\la \mid_{\la=\la_{k}}$ for some polynomials $P_1, P_2, \ldots, P_k$. 
\end{lemma}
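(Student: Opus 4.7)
My plan is to follow the template of Lemma~\ref{lemma4.5}, with the Euclidean spherical function $\phi_\la$ in place of the exponential $E_\zeta$ and the spherical Fourier transform $\mathcal{H}$ in place of the standard Fourier transform. First, since $\mathcal{H}T$ is a tempered distribution on $\R$ whose support is the finite set $\{\la_1, \ldots, \la_k\}$, the classical structure theorem for distributions supported at points yields
$$\mathcal{H}T \;=\; \sum_{j=1}^{k} Q_j(\pa_\la)\,\delta_{\la_j}$$
for certain polynomials $Q_j$. Since each $\la_j \in \R$, properties (2) and (5) listed in Subsection~\ref{subsection2.1} show that $\phi_{\la_j}$ and its $\la$-derivatives grow at most polynomially in $x$, and hence define radial tempered distributions on $\R^d$.

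The central step is then to establish the spectral identity
$$\mathcal{H}\!\left[\phi_\la\big|_{\la=\la_0}\right] \;=\; c_d\,\la_0^{-(d-1)}\,\delta_{\la_0},$$
interpreted in the appropriate even distributional sense, by dualising the inversion formula $\mathcal{H}^{-1}\delta_{\la_0}(x) = \la_0^{d-1}\phi_{\la_0}(x)$ read off from \eqref{eq2}. Differentiating in $\la$ inside the pairing and expanding via Leibniz against the Plancherel weight $\la^{d-1}$ then gives, for every polynomial $P$,
$$\mathcal{H}\!\left[P(\pa_\la)\phi_\la\big|_{\la=\la_j}\right] \;=\; \widetilde{P}_j(\pa_\la)\,\delta_{\la_j},$$
where the linear map $P \mapsto \widetilde{P}_j$ is upper triangular with nonzero diagonal in the degree filtration and is therefore invertible.

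Finally, I would choose $P_j$ so that $\widetilde{P}_j = Q_j$ for each $j$ and set
$$S := \sum_{j=1}^{k} P_j(\pa_\la)\phi_\la\big|_{\la=\la_j},$$
which by construction satisfies $\mathcal{H}S = \mathcal{H}T$. Injectivity of $\mathcal{H}$ on radial tempered distributions (dual to the topological isomorphism $\mathcal{H}: \sar(\R^d) \to H_e(\Omega_a)$) will then force $T = S$, which is exactly the representation claimed. The main obstacle I anticipate is the bookkeeping around the identity for $\mathcal{H}\phi_{\la_0}$: the Plancherel weight $\la^{d-1}$ must be reconciled with the evenness $\phi_\la = \phi_{-\la}$, in particular to cover the degenerate case $\la_0=0$ (where the weight vanishes and one needs Taylor expansion of the weight rather than division) and to ensure that support points occurring as $\pm$ pairs are combined consistently. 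Once this is settled, solving $\widetilde{P}_j = Q_j$ is routine by the triangular structure of the map $P \mapsto \widetilde{P}_j$.
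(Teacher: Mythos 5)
Your proposal follows the paper's own line exactly: the paper disposes of Lemma~\ref{lemma4.8} by the single remark ``follows by the similar arguments as in the proof of Lemma \ref{lemma4.5},'' and you are carrying that reference out — write $\mathcal{H}T$ as a finite sum of $\delta_{\lambda_j}$-derivatives, exhibit a distribution of the claimed form with the same spherical transform, and invoke injectivity of $\mathcal{H}$ on radial tempered distributions. That strategy is sound and is precisely what the paper intends.

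However, the ``central step'' as you state it contains a genuine miscomputation, and the ``main obstacle'' you anticipate is in fact an artifact of that miscomputation. You read the weighted inversion formula \eqref{eq2} literally with $\phi=\delta_{\lambda_0}$ to get $\mathcal{H}^{-1}\delta_{\lambda_0}=\lambda_0^{d-1}\phi_{\lambda_0}$, hence $\mathcal{H}\phi_{\lambda_0}=c_d\lambda_0^{-(d-1)}\delta_{\lambda_0}$. But \eqref{eq2} is the formula for $\mathcal{H}^{-1}$ acting on test functions $\phi\in H_e(\Omega_a)$; its extension to a functional $S\in H_e(\Omega_a)'$ is defined by duality, $\langle \mathcal{H}^{-1}S, f\rangle := \langle S, \mathcal{H}f\rangle$, not by substituting $S$ into the integral. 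With the paper's convention $\langle \delta_{\lambda_0},\phi\rangle=\phi(\lambda_0)$, one gets directly
\[
\langle \mathcal{H}\phi_{\lambda_0}, \phi\rangle \;=\; \langle \phi_{\lambda_0}, \mathcal{H}^{-1}\phi\rangle \;=\; \int_{\R^d}\phi_{\lambda_0}(x)\,\mathcal{H}^{-1}\phi(x)\,dx \;=\; \mathcal{H}\big(\mathcal{H}^{-1}\phi\big)(\lambda_0) \;=\; \phi(\lambda_0),
\]
so $\mathcal{H}\phi_{\lambda_0}=\delta_{\lambda_0}$ with no Plancherel weight at all, and more generally $\mathcal{H}\big[\phi_{\lambda_0,k}\big]=(-1)^k\delta_{\lambda_0}^{(k)}$. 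Consequently the map $P\mapsto\widetilde P_j$ is just $P(t)\mapsto P(-t)$ (trivially invertible), there is no division by $\lambda_0^{d-1}$ anywhere, and the degenerate case $\lambda_0=0$ that you worried about never arises. The only genuine evenness point that remains is that on $H_e(\Omega_a)$ the functionals $\delta_{\lambda_0}$ and $\delta_{-\lambda_0}$ coincide (and odd-order derivatives of $\phi_\lambda$ at $\lambda=0$ vanish), which is handled exactly as in the $a=0$ step of the One Radius Theorem. Once you replace your weighted spectral identity with the unweighted one obtained by duality, the rest of your argument — matching coefficients and concluding $T=S$ by injectivity of $\mathcal{H}$ — goes through cleanly and coincides with the paper's intended proof.
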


The proof of the above lemma follows by the similar arguments as in the proof of Lemma \ref{lemma4.5}.

\section*{Acknowledgment}
	The first author would like to thank UGC-CSIR, India for providing the fellowship for the completion of this work.

\end{document}